\def\url@leostyle{%
	\@ifundefined{selectfont}{\def\UrlFont{\sf}}{\def\UrlFont{\small\ttfamily}}}
\numberwithin{equation}{section}
          \def\dudt{\frac{\del u}{dt}}
          \newcommand{\nc}{\newcommand}
          \nc{\DMO}{\DeclareMathOperator}	
          \nc{\commentout}[1]{}
          \nc{\newnotation}{\nomenclature}
          \nc{\wrap}{\cW}
          \nc{\Tw}{\mathsf{Tw}}
          \nc{\loc}{\mathsf{Loc}}
          \nc{\Top}{Top}
          \nc{\emb}{\mathsf{emb}}
          \nc{\ind}{\mathsf{Ind}}
          \nc{\Ind}{\mathsf{Ind}}
          \nc{\Loc}{\mathsf{Loc}}
          \nc{\Cob}{\mathsf{Cob}}
          \nc{\mul}{\mathsf{Mul}}
          \nc{\fat}{\mathsf{fat}}
          \nc{\cob}{\mathsf{Cob}}
          \nc{\coh}{\mathsf{Coh}}
          \nc{\Liouaut}{\Aut}
          \nc{\Liouauto}{{\Aut^o}}
          \nc{\Liouautb}{\Aut^{b}}
          \nc{\Liouautgr}{\Aut^{gr}}
          \nc{\Liouautgrb}{\Aut^{gr,b}}
          \nc{\idem}{\mathsf{Idem}}
          \nc{\sets}{\mathsf{Sets}}
          \nc{\near}{\mathsf{near}}
          \nc{\sing}{\mathsf{Sing}}
          \nc{\Sing}{\mathsf{Sing}}
          \nc{\perf}{\mathsf{Perf}}
          \nc{\block}{\mathsf{block}}
          \nc{\ssets}{\mathsf{sSets}}
          \nc{\cmpct}{\mathsf{cmpct}}
          \nc{\compact}{\mathsf{cmpct}}
          \nc{\pwrap}{\mathsf{PWrap}}
          \nc{\coder}{\mathsf{Coder}}
          \nc{\bimod}{\mathsf{Bimod}}
          \nc{\grmod}{\mathsf{GrMod}}
          \nc{\Morita}{\mathsf{Morita}}
          \nc{\morita}{\mathsf{Morita}}
          \nc{\spaces}{\mathsf{Spaces}}
          \nc{\pwrms}{\mathsf{PWrFuk}_{M,S}}
          \nc{\pwrmf}{\mathsf{PWrFuk}_{M,F}}
          \nc{\pwrapmf}{\mathsf{PWrFuk}_{M,F}}
          \nc{\fuk}{\mathsf{Fukaya}}
          \nc{\infwr}{\mathsf{InfWr}}
          \nc{\fukaya}{\mathsf{Fukaya}}
          \nc{\autml}{\mathsf{Aut}_{M,\Lambda}}
          \nc{\fukml}{\mathsf{Fukaya}_{M,\Lambda}}
          \nc{\fukmle}{\mathsf{Fukaya}_{M,\Lambda,\epsilon}}
          \nc{\fukmod}{\wrfukcompact(M)\modules}
          \nc{\lag}{\mathsf{Lag}}
          \nc{\lagm}{\lag_M}
          \nc{\lago}{\lag^o}
          \nc{\lagml}{\lag_{M,\Lambda}} 
          \nc{\lagmle}{\lag_{M,\Lambda,\epsilon}}
          \nc{\Fun}{\mathsf{Fun}}
          \nc{\fun}{\mathsf{Fun}}
          \nc{\vect}{\mathsf{Vect}}
          \nc{\chain}{\mathsf{Chain}}
          \nc{\chainn}{Chain}
          \nc{\wrfuk}{\mathsf{WrFukaya}}
          \nc{\wrfukcompact}{\mathsf{WrFukaya}_{\mathsf{cmpct}}}
          \nc{\pwrfuk}{\mathsf{PWrFukaya}}
          \nc{\inffuk}{\mathsf{InfFuk}}
          \nc{\pwrfukml}{\mathsf{PWrFukaya}_{M,\Lambda}}
          \nc{\inffukml}{\mathsf{InfFuk}_{M,\Lambda}}
          \nc{\nattrans}{\mathsf{NatTrans}}
          \nc{\corres}{\mathsf{Corres}}
          \nc{\fukep}{\fukaya_\Lambda(M,\epsilon)}
          \nc{\fukepop}{\fukaya_\Lambda(M,\epsilon)^{\op}}
          \nc{\lagep}{\lag_\Lambda(M,\epsilon)}
          \DMO{\cyl}{cyl} 
          \nc{\dbcoh}{D^b\mathsf{Coh}}
          \nc{\corr}{\mathsf{Corr}}
          \nc{\cat}{\mathsf{Cat}}
          \nc{\Cat}{\mathsf{Cat}}
          \nc{\ainfty}{\mathsf{A}_\infty}
          \nc{\inftycat}{\mathcal{C}\!\operatorname{at}_\infty}
          \nc{\inftyCat}{\mathcal{C}\!\operatorname{at}_\infty}
          \nc{\inftyGpd}{\mathcal{G}\!\operatorname{pd}_\infty}
          \nc{\Ainftycat}{\mathcal{C}\!\operatorname{at}_{A_\infty}}
          \nc{\dgcat}{\mathcal{C}\!\operatorname{at}_{dg}}
          \nc{\ainftycat}{\mathcal{C}\!\operatorname{at}_{A_\infty}}
          \nc{\stablecat}{\mathcal{C}\!\operatorname{at}_\infty^{\Ex}}
          \DMO{\im}{im}
          \DMO{\ev}{ev}
          \DMO{\stable}{Ex}
          \DMO{\inj}{inj}
          \DMO{\fib}{fib}
          \DMO{\conf}{Conf}
          \DMO{\chains}{Chains}
          \DMO{\cochains}{Cochains}
          \DMO{\cone}{Cone}
          \DMO{\Map}{Map}
          \DMO{\ran}{Ran}
          \DMO{\rot}{Rot}
          \DMO{\leg}{Leg}
          \DMO{\imm}{imm}
          \DMO{\adj}{adj}
          \DMO{\symp}{Symp}
          \DMO{\tree}{Tree}
          \DMO{\cube}{Cube}
          \DMO{\deep}{deep}
          \DMO{\back}{back}
          \DMO{\Hoch}{Hoch}
          \DMO{\front}{front}
          \DMO{\flow}{Flow}
          \DMO{\floer}{Floer}
          \DMO{\Maps}{Maps}
          \DMO{\exact}{exact}
          \DMO{\excess}{Excess}
          \DMO{\Decomp}{Decomp}
          \DMO{\decomp}{Decomp}
          \DMO{\collar}{collar}
          \DMO{\yoneda}{Yoneda}
          \DMO{\hamspace}{Ham}
          \DMO{\sympspace}{Symp}
          \DMO{\holomaps}{Holomaps}
          \DMO{\comp}{Comp}
          \DMO{\crit}{Crit}
          \DMO{\test}{{test}}
          \DMO{\sign}{sign}
          \DMO{\topp}{top}
          \DMO{\indx}{Index}
          \DMO{\Break}{Break} 
          \DMO{\zero}{zero} 
          \DMO{\ob}{Ob}
          \DMO{\gr}{Gr} 
          \DMO{\Gr}{Gr} 
          \DMO{\cl}{Cl} 
          \DMO{\grlag}{GrLag}
          \DMO{\Pin}{Pin}
          \DMO{\Graph}{Graph}
          \DMO{\pin}{Pin}
          \DMO{\gap}{Gap}
          \DMO{\Ex}{Ex}
          \DMO{\id}{id}
          \DMO{\End}{End}
          \DMO{\sym}{Sym}
          \DMO{\aut}{Aut}
          \DMO{\Aut}{Aut}
          \DMO{\haut}{hAut}
          \DMO{\hAut}{hAut}
          \DMO{\DK}{DK} 
          \DMO{\poly}{poly} 
          \DMO{\diff}{Diff}
          \DMO{\coll}{coll}
          \DMO{\dist}{dist} 
          \DMO{\coker}{coker} 
          \nc{\kernel}{\ker} 
          \DMO{\sspan}{span}
          \DMO{\hocolim}{hocolim}	
          \DMO{\holim}{holim}
          \DMO{\sk}{sk}
          \DMO{\ho}{ho}
          \DMO{\fin}{fin}
          \DMO{\tor}{Tor}
          \DMO{\ext}{Ext}
          \DMO{\ret}{Ret}
          \DMO{\ham}{Ham}
          \DMO{\con}{con}
          \DMO{\leaf}{leaf}
          \DMO{\supp}{supp}
          \DMO{\edge}{edge}
          \DMO{\colim}{colim}
          \DMO{\edges}{edges}
          \DMO{\Image}{image}
          \DMO{\roots}{roots}
          \DMO{\height}{height}
          \DMO{\finmod}{FinMod}
          \DMO{\leaves}{leaves}
          \DMO{\planar}{planar}
          \DMO{\vertices}{vertices}
          \nc{\lagg}{\lag^{\cG}}
          \nc{\iso}{\mathsf{Iso}}
          \nc{\Set}{\mathsf{Set}}
          \nc{\Ass}{\mathsf{ \bf Ass}}
          \nc{\Mod}{\mathsf{Mod}}
          \nc{\modules}{\mathsf{Mod}}
          \nc{\sset}{\mathsf{sSet}}
          \nc{\liou}{\mathsf{Liou}}
          \nc{\poset}{\mathsf{Poset}}
          \nc{\trno}{T^*\RR^n_{\geq 0}}
          \nc{\spectra}{\mathsf{Spectra}}
          \nc{\tensorfin}{\tensor^{\fin}}
          \nc{\lagptg}{\lag_{pt,pt}^{\cG}}
          \nc{\Fin}{\mathcal{F}\mathsf{in}}
          \nc{\lagnl}{\lag_{N,\Lambda}}
          \nc{\lagmlg}{\lag_{M,\Lambda}^{\cG}}
          \nc{\lagsplit}{\lag^{\mathsf{split}}}
          \nc{\lagktimes}{(\lag^{\dd k})^\times}
          \nc{\lagplanar}{\lag^{\times,\planar}}
          \nc{\Cont}{\text{\rm Cont}}
          \nc{\Ham}{\text{\rm Ham}}
          \nc{\Dev}{\text{\rm Dev}}
          \nc{\Lin}{\text{\rm Lin}}
          \nc{\Int}{\text{\rm Int}}
          \nc{\Hom}{\text{\rm Hom}}
          \nc{\Chord}{\text{\rm Chord}}
          \nc{\nbhd}{\mathcal{N}\text{\rm{bhd}}}
          \nc{\onef}{1_{\fukaya}}
          \nc{\smsh}{\wedge}
          \nc{\un}{\underline}
          \nc{\xto}{\xrightarrow}
          \nc{\xra}{\xto}
          \nc{\tensor}{\otimes}
          \nc{\del}{\partial}
          \nc{\dd}{\diamond}
          \nc{\tri}{\triangle}
          \nc{\bb}{\Box}
          \nc{\into}{\hookrightarrow}
          \nc{\onto}{\twoheadrightarrow}
          \nc{\contains}{\supset}
          \nc{\transverse}{\pitchfork}
          \nc{\uncirc}{\underline{\circ}}
          \nc{\Jbar}{\overline{J}}
          \nc{\Fbar}{\overline{F}}
          \nc{\delbar}{\overline{\del}}
          \nc{\thetabar}{\overline{\theta}}
          \nc{\omegabar}{\overline{\omega}}
          \nc{\Liou}{\text{\rm Liou}}
          \nc{\Yhat}{\widehat{Y}}
          \nc{\trbar}{\overline{T^*\RR}}
          \nc{\tr}{T^*\RR}
          \nc{\tsa}{Ts\cA}
          \nc{\tsb}{Ts\cB}
          \nc{\cmbar}{\overline{\cM}}
          \nc{\crbar}{\overline{\cR}}
          \nc{\vece}{ {\vec \epsilon}}	
          \nc{\vecd}{ {\vec \delta}}
          \nc{\ov}{\overline}
          \DMO{\op}{op}
          \nc{\opp}{ ^{\op}}
          \nc{\hiro}{\textcolor{blue}}
          \nc{\YG}{\textcolor{orange}}
          \nc{\eqn}{\begin{equation}}
          \nc{\eqnn}{\begin{equation}\nonumber}
          \nc{\eqnd}{\end{equation}}
          \nc{\enum}{\begin{enumerate}}
          \nc{\enumd}{\end{enumerate}}
          \nc{\beastar}{\begin{eqnarray*}}
          \nc{\eeastar}{\end{eqnarray*}}
          \def\cA{\mathcal A}\def\cB{\mathcal B}
          \def\cF{\mathcal F}\def\cG{\mathcal G}\def\cH{\mathcal H}
          \def\cJ{\mathcal J}\def\cL{\mathcal L}
          \def\cM{\mathcal M}\def\cO{\mathcal O}\def\cP{\mathcal P}
          \def\cR{\mathcal R}\def\cS{\mathcal S}
          \def\cU{\mathcal U}\def\cW{\mathcal W}
          \def\CC{\mathbb C}
          \def\HH{\mathbb H}
          \def\JJ{\mathbb J}\def\LL{\mathbb L}
          \def\RR{\mathbb R}
          \def\ZZ{\mathbb Z}
          \def\sS{\mathsf S}
          \nc{\Euc}{\mathsf{Euc}}
          \nc{\mfld}{\mathsf{Mfld}}
          \nc{\DTop}{\mathsf{DTop}}
          \nc{\simp}{\mathsf{Simp}}
          \nc{\Ainftycatt}{A_\infty Cat}
          \nc{\dgcatt}{dg Cat}
          \nc{\StableCat}{StableCat}
          \nc{\subdivision}{\mathsf{subdiv}}
          \nc{\Kan}{\mathcal{K}\mathsf{an}}
          \def\dudt{\frac{\del u}{\del t}}
          \theoremstyle{definition}
          \newtheorem{theorem}{Theorem}[section]
          \newtheorem{prop}[theorem]{Proposition}
          \newtheorem{lemma}[theorem]{Lemma}
          \newtheorem{warning}[theorem]{Warning}
          \newtheorem{construction}[theorem]{Construction}
          \newtheorem{definition}[theorem]{Definition}
          \newtheorem{defn}[theorem]{Definition}
          \newtheorem{notation}[theorem]{Notation}
          \newtheorem{example}[theorem]{Example}
          \newtheorem{choice}[theorem]{Choice}
          \newtheorem{remark}[theorem]{Remark}
          \newtheorem{figurelabel}[theorem]{Figure} 
\title{Holomorphic curves and continuation maps in Liouville bundles}
\author[$\dagger$]{Yong-Geun Oh}
\author[$\star$]{Hiro Lee Tanaka}
\affil[$\dagger$]{Center for Geometry and Physics (IBS), Pohang, Korea \&
Department of Mathematics, POSTECH, Pohang Korea.}
\affil[$\star$]{Department of Mathematics, Texas State University}
\begin{document}

\maketitle

\begin{abstract}
We construct an unwrapped Floer theory for bundles of Liouville manifolds (and Liouville sectors more generally). This means we construct a compatible collection of unwrapped Fukaya categories of fibers of a Liouville bundle, and prove that the two natural constructions of continuation maps in this setting behave compatibly. This sets up a machinery for studying Floer-theoretic invariants of smooth group actions on Liouville manifolds and sectors; indeed, these constructions are exploited in~\cite{oh-tanaka-actions} to construct homotopically coherent actions of Lie groups on wrapped Fukaya categories, thereby proving a conjecture from Teleman's 2014 ICM address.
\end{abstract}

\tableofcontents

\section{Introduction}
In this paper, we lay the groundwork for non-wrapped Floer theory in bundles of Liouville manifolds, and of Liouville sectors more generally.

A Liouville manifold is a kind of non-compact symplectic manifold\footnote{A zero-dimensional Liouville sector may be compact, however.}, equipped with the data of a well-behaved antiderivative for the symplectic form. Liouville manifolds form a well-studied class of symplectic manifolds---the non-compactness provides a flexibility similar to the flexibility enjoyed by non-compact manifolds in differential topology, while the behavior of the antiderivative guarantees the safe use of holomorphic curve techniques to study Liouville manifolds. Examples include cotangent bundles, Weinstein manifolds, and completions of Liouville domains.

A Liouville sector is a generalization introduced in~\cite{gps}. Informally, Liouville manifolds are to Liouville sectors what manifolds are to manifolds with boundary---indeed, a Liouville sector without boundary is simply a Liouville manifold, and when a Liouville sector has boundary, one demands that the characteristic foliation of the boundary satisfy some constraints that again permit the safe use of holomorphic curve techniques. For example, if $Q$ is a smooth manifold, $T^*Q$ is a Liouville manifold (and sector), while if $Q$ is a smooth manifold with non-empty boundary, $T^*Q$ is a Liouville sector.

There is a natural notion of automorphism of Liouville sector $M$, and a bundle of Liouville sectors over a smooth manifold $B$ is an $M$ fiber bundle whose structure group has been smoothly reduced to this automorphism group. An automorphism of a Liouville sector without boundary is precisely an automorphism as a Liouville manifold. Thus, for most of this paper,  the reader familiar with Liouville manifolds will not lose much intuition by having in mind the case of Liouville manifolds (rather than sectors).

To motivate the study of Floer theory of bundles, let us note that a plethora of Floer-theoretic calculations has been successfully carried out precisely by exploiting symmetries. Indeed, many of the first computations of Lagrangian Floer cohomology arose by studying fixed points of antiholomorphic involutions, or by exploiting torus actions on toric manifolds.

Where do bundles enter the picture? In homotopy theory, given a group action of $G$ on an object $Y$, it is often convenient to exhibit a family of $Y$ living over the classifying space $BG$. A standard way to do so is to construct a map $G \to \Aut(Y)$ (and hence a map $BG \to B \Aut(Y)$), exhibit some universal $Y$-bundle living over $B\Aut(Y)$, and pull back this universal bundle along the map $BG \to B\Aut(Y)$. One can perform this construction for any Liouville action of $G$ on a Liouville sector $M$, exhibiting a family of $M$ living over $BG$ (i.e., an $M$-bundle over $BG$). Moreover, by combining our present work with the diffeological space framework as exploited in~\cite{oh-tanaka-smooth-approximation}, one can articulate the {\em smoothness} of such infinite-dimensional entities.

Building on the results of~\cite{oh-tanaka-smooth-approximation}, the present work exploits this smoothness to construct a system of Fukaya categories living over $BG$. Informally, to any simplex mapping smoothly to $BG$, we assign a Fukaya category; our assignment respects the inclusions of the faces of simplices in a standard way. (Theorem~\ref{theorem. O is Aoo} below.) These Fukaya categories are unwrapped and directed, in a sense explained below.

The culmination of all this legwork is our work~\cite{oh-tanaka-actions}, where we  incorporate $\infty$-categorical machinery from~\cite{oh-tanaka-localizations} to show that the {\em wrapped} Fukaya category of $M$ varies locally constantly over $BG$. (For a brief discussion of the importance of wrapping, see Section~\ref{section. motivating continuation compatibility} below.) In other words, one obtains a local system of $A_\infty$-categories over $BG$. Again, standard homotopy-theoretic techniques tell us that such a local system precisely encodes the data of a $G$ action on the wrapped Fukaya category of $M$. As it happens, this proves a conjecture of Teleman from the 2014 ICM~\cite{teleman-icm} in the Liouville and the monotone settings.

This opens the door to exploit symmetries of Liouville manifolds, and Liouville sectors more generally.

For example, the work in~\cite{oh-tanaka-actions} shows that the homotopy groups of $G$ map to the Hochschild cohomology groups of the wrapped Fukaya category of $M$; even better, these homomorphisms arise from a map of $E_2$-algebras from the based loop space $\Omega G$ to the space of Hochschild cochains.

As another application, if one is interested in invariants up to homotopy coherent data, one can use our techniques to avoid speaking of honestly $G$-equivariant data on $M$, and instead study data encoded by certain holomorphic disks in the Liouville bundle over $BG$. This avoids some difficulties associated to constructing equivariant Floer theory. (However, we warn that the ``strictly'' $G$-equivariant Floer theory will often {\em not} be equivalent to the homotopically flavored data one obtains by constructing Floer-theoretic invariants over $BG$. There are pros and cons to both models: The homotopical constructions will almost always be better behaved, but a strict construction may contain geometric data invisible in the homotopical world.)

Let us state the main results of the present work.

Fix a Liouville bundle $E \to B$. We first define a directed Fukaya category $\cO_j$ for every simplex $j: |\Delta^n| \to B$ smoothly mapping to $B$. The {\em directedness} means that all endomorphism algebras are equivalent to the base ring, and that the collection of objects of $\cO_j$ has a partial ordering---if $L_0 \not \leq L_1$, then there are only zero morphisms from $L_0$ to $L_1$. The totality of this data (i.e., the collection of these Fukaya categories), along with their compatibilities along face maps of simplices, is what one might call the (unwrapped, directed) Floer theory associated to a Liouville bundle.

\begin{theorem}\label{theorem. O is Aoo}
For every smooth map $j: |\Delta^n| \to B$, the unwrapped, directed Fukaya category $\cO_j$ is an $A_\infty$-category.
Moreover, for every commutative diagram
    \eqnn
    \xymatrix{
    |\Delta^n| \ar[rr]^\iota \ar[dr]^j
        && |\Delta^{n'}| \ar[dl]^{j'} \\
        & B &
    }
    \eqnd
where $\iota$ is an injective simplicial map, we have an induced fully faithful functor $\iota_*: \cO_j \to \cO_{j'}$. These functors respect composition of $\iota$.

In other words, the assignment $j \mapsto \cO_j$ and $\iota \mapsto \iota_*$ defines a functor
$$
\cO: \simp(B) \to \Ainftycatt
$$
from the category $\simp(B)$ of smooth simplices of $B$ to the category $\Ainftycatt$ of $A_\infty$-categories, where all morphisms of $\simp(B)$ are sent to fully faithful maps.
\end{theorem}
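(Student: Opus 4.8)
The plan is to split the argument into three parts: (1) showing that for a fixed simplex $j$, $\cO_j$ is a well-defined $A_\infty$-category; (2) constructing the functor $\iota_*$ for an injective simplicial map $\iota$ and verifying full faithfulness; (3) checking strict compatibility with composition of the $\iota$'s, so that the data assembles into a functor out of $\simp(B)$. Throughout, the key geometric input is that pulling back the Liouville bundle $E \to B$ along $j$ (and along $j'$) produces families of Liouville sectors over $|\Delta^n|$ (resp.\ $|\Delta^{n'}|$), and that the relevant moduli spaces of holomorphic curves for the directed Fukaya category are cut out in these families using compatible choices of almost complex structures and perturbation data. The directedness hypothesis is what makes everything tractable: since all endomorphism algebras are the base ring and non-comparable objects have only the zero morphism space, the $A_\infty$-operations $\mu^d$ involve only finitely many moduli problems of the sort already controlled in the unwrapped setting, and the usual Gromov-type compactness plus transversality arguments (for families parametrized by a compact simplex) suffice. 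For (1), then, I would fix the Liouville-bundle data along $j$, choose a coherent system of Floer data for the ordered collection of Lagrangian branes, and define $\mu^d$ by counting rigid curves; the $A_\infty$-relations follow from the standard codimension-one boundary analysis of the one-dimensional moduli spaces, where the compactness needed is guaranteed by the Liouville (and sector) conditions recalled earlier in the paper.

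For part (2): given the commuting triangle with $\iota$ an injective simplicial map, the pullback of $E$ along $j$ is canonically the restriction of the pullback along $j'$ to the subsimplex $\iota(|\Delta^n|) \subset |\Delta^{n'}|$. The plan is to \emph{first} arrange the Floer data for $\cO_{j'}$ so that its restriction to the face $\iota(|\Delta^n|)$ agrees with the chosen Floer data for $\cO_j$ --- this is a matter of choosing perturbation/almost-complex data by induction over the skeleton of $\simp(B)$, extending from faces to higher simplices, which is possible because the relevant spaces of choices are contractible (convexity of the space of compatible almost complex structures, and the usual Floer-data extension lemmas). With such compatible choices in hand, $\iota_*$ is defined on objects by the inclusion of the Lagrangian branes of $\cO_j$ as a subset of those of $\cO_{j'}$, on morphism complexes by the identity (the Floer complexes literally coincide, since they are computed from identical data), and on the higher $A_\infty$-functor terms $F^d$ by zero for $d \geq 2$. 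This is then \emph{strictly} an $A_\infty$-functor --- indeed a cofibration of $A_\infty$-categories --- and it is fully faithful essentially by construction, since hom-complexes are preserved on the nose. The only subtlety is that the collection of objects of $\cO_{j'}$ should be set up (in the earlier construction) to literally \emph{contain}, not merely contain up-to-equivalence, the objects arising from any face; this is a bookkeeping issue I would handle by indexing Lagrangian branes in a way that does not depend on the ambient simplex.

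For part (3): functoriality $(\iota' \circ \iota)_* = \iota'_* \circ \iota_*$ and $(\id)_* = \id$ are then immediate, because on objects and morphism complexes all three functors are (inclusions of subsets and) identities, and the higher terms all vanish; there is nothing to check beyond observing that the skeleton-by-skeleton choice of Floer data in part (2) can be made simultaneously compatible with \emph{all} face inclusions at once (again an inductive extension over $\simp(B)$ ordered by dimension). The main obstacle is this global coherence of auxiliary choices: one must produce, in a single inductive construction over the poset of smooth simplices of $B$, a system of Liouville-bundle trivializations near Lagrangians, almost complex structures, and perturbation data that restricts correctly along \emph{every} injective simplicial map. The standard tool is that at each stage the space of admissible extensions is nonempty and (weakly) contractible, so the induction never obstructs; packaging this cleanly --- rather than the local holomorphic-curve analysis, which is by now routine in the Liouville setting --- is where the real work lies, and it is presumably carried out in detail in the body of the paper.
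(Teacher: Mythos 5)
Your overall strategy---coherent choices of Floer data made inductively so that restriction to faces is literal equality, operations defined by counting rigid curves in the pulled-back family, $A_\infty$-relations from codimension-one boundary analysis, $\iota_*$ strict and the identity on hom complexes with vanishing higher terms---is the same as the paper's, and your observation that the real work is the global coherence of auxiliary choices (carried out in the paper as Choice~\ref{choice. floer data} and Proposition~\ref{prop:choices-exist}, with branes indexed independently of the ambient simplex exactly as you suggest) is on target. One refinement worth noting: the paper indexes the Floer data by the composite map $h = j\circ\beta\colon |\Delta^d|\to B$ and the tuple of branes, rather than by objects of $\simp(B)$, so that agreement along shared faces of distinct simplices is automatic rather than something to be arranged.

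However, there is a genuine missing idea: you never say how a $(d+1)$-punctured holomorphic disk is made to live over the simplex in the first place, and this is the central non-standard ingredient. To define $\mu^d$ one must pull $E$ back along a map $\cS_r \subset \overline{\cS}_{d+1}^\circ \xrightarrow{\nu_\beta} |\Delta^d| \xrightarrow{\beta} |\Delta^n| \to B$, i.e., one needs maps $\nu_\beta$ from the universal family of punctured disks to simplices, sending strip-like ends to edges and boundary arcs to vertices, and---crucially---compatible with the gluing/degeneration of nodal disks on one side and the insertion operation of simplices on the other (conditions \ref{NS. strip condition u}--\ref{NS. j functorial}, Definition~\ref{defn. natural system}; existence is Proposition~\ref{prop:natural systems exist}, due to Savelyev). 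Without \ref{NS. star compatible}, the codimension-one boundary of a one-dimensional moduli space of sections would not be identified with fiber products of lower-arity moduli spaces, so ``the standard codimension-one boundary analysis'' does not go through; and without \ref{NS. j functorial} the moduli defining $\mu^d$ in $\cO_j$ and $\cO_{j'}$ would not literally coincide, so $\iota_*$ would not be strict. Your inductive extension of almost-complex and perturbation data presupposes this domain-side operadic structure but does not supply it; it is a separate choice (made once and for all, independent of any symplectic geometry) that must be added to your argument.
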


Informally, an object of $\cO_j$ is a brane $L_a$ contained in the fiber above some vertex $a$ of $|\Delta^n|$. To define $\hom_{\cO_j}(L_a,L'_{a'})$ for two objects, we choose a Liouville connection\footnote{Note that we choose such a connection for every pair of objects; there is no global connection chosen.} along the edge of $|\Delta^n|$ from $a$ to $a'$, and we define the generators of $\hom$ to be given by parallel transport chords from $L_a$ to $L'_{a'}$. We artificially\footnote{This artifice will disappear upon passage to the wrapped setting; see~\cite{oh-tanaka-actions}.} impose an ordering by defining a partial order on the collection of objects, and we declare morphism complexes to be null when the target brane is not strictly larger than the domain brane. This directedness of $\cO_j$ is imposed to avoid dealing with a further layer of perturbations and choices (which one must deal with if one is to achieve transversality in the non-directed setting).

The $A_\infty$ operations are defined by counting holomorphic disks mapping into $j^* E$. We mention here that it is a priori not at all obvious how to articulate this counting problem---one must set up this count in a way compatible with different choices of $j$ and $j'$ for Theorem~\ref{theorem. O is Aoo} to hold. For this, we use a beautiful insight of~\cite{savelyev}, where Savelyev exhibits an operadically compatible map, for all $d \geq 1$, between the moduli of $(d+1)$-punctured holomorphic disks and the standard $d$-simplex. (See Section~\ref{section. curves operad}.)

To provide a more complete story of Floer theory in Liouville bundles, the present work also presents a careful consideration of continuation map methods in unwrapped Floer theory, both for individual Liouville manifolds  (or sectors) and their bundles. The main result along these lines (Theorem~\ref{thm:hcL=mu2ccL}) is a compatibility between two natural ways to define continuation maps.

Let us review the two ways to construct continuation maps.
Fix a Hamiltonian isotopy
    \eqnn
    \cL = \{L_s\}_{s \in [0,1]},
    \qquad
    L_0 = L,
    \qquad
    L_1 = L'
    \eqnd
of Lagrangians in $M$. To avoid clashing with later notation, we have set $L=L_0$ and $L'=L_1$.

Because we are working with Liouville manifolds and Liouville sectors, we require that $\cL$ be \emph{non-negative at infinity} (Definition~\ref{defn. non negative wrapping}). In the Liouville setting, these kinds of assumptions are made often to deal with the fact that our symplectic manifolds are non-compact---non-negativity allows us to apply a (strong) maximal principle and ensure compactness of the relevant moduli spaces.
(See \cite[Introduction]{oh-floer-continuity} for an early appearance of such a discussion.)

One standard way of defining a continuation map for Lagrangian Floer homology is by considering
the pseudoholomorphic curve equation for a strip with moving boundary conditions:
 \eqn\label{eq:moving-intro}
 	u: \RR \times [0,1] \to M,
	\qquad
        \begin{cases}
        {\frac{\partial u}{\partial\tau}} + J_{(\rho(1-\tau),t)} {\frac{\partial u}{\partial t}}=0 \\
        u(\tau ,0)\in K,\;\; u(\tau ,1)\in L_{\rho(1-\tau)}.
        \end{cases}
    \eqnd
Here, we have chosen another Lagrangian $K$ in general position with respect to $L_0$ and $L_1$. $\{J_{s,t}\}_{(s,t) \in [0,1]^2}$ is a family of almost complex structures, and $\rho:\RR \to [0,1]$ is a elongation function. (See Choice~\ref{choice. elongation rho}.) We refer the reader to Figure \ref{figure. continuation strip}; note that the moving boundary condition places $L$ near $\tau = \infty$, and places $L'$ near $\tau = -\infty$. (In particular, the isotopy evolves in the $-{\del/\del \tau}$ direction.),

\begin{figure}[ht]
	\eqnn
    			\xy
    			\xyimport(8,8)(0,0){\includegraphics[width=3in]{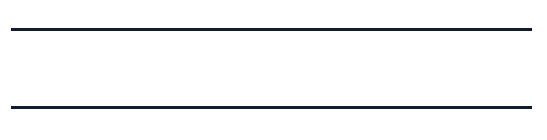}}
					,(-0.5,4.1)*{-\infty}
					,(8.4,4.1)*{+\infty}
					,(4,0.5)*{K}
					,(4,7.5)*{\cL}
					,(7.5,7.2)*{L}
					,(0.6,7.2)*{L'}
    			\endxy
    \eqnd
    \begin{figurelabel}\label{figure. continuation strip}
    A holomorphic strip with moving boundary condition $\cL$ at $t=1$ and fixed boundary condition $K$ at $t=0$. One defines a continuation map $CF(K,L) \to CF(K,L')$ by counting such strips.
    \end{figurelabel}
\end{figure}

The count of holomorphic strips $u$ results in a chain map
    \eqnn
    h^\rho_\cL: CF^*(K,L) \to CF^*(K,L')
    \eqnd
between the unwrapped Floer complexes. We warn that, because of the non-negativity constraint, the continuation map is not usually an isomorphism  in the unwrapped Fukaya category of a Liouville manifold  (or sector).

Another standard way to construct a continuation map (given the same isotopy $\cL$ as above) is to count holomorphic disks with one boundary puncture and moving boundary condition. Concretely, fix a point $z_0 \in \del D^2$ and choose another elongation function $\chi: \del D^s \setminus \{z_0\} \to [0,1]$. We let
	$$
\cM(D^2\setminus\{z_0\} ; \cL^\chi)
	$$
be the set of those maps
	\eqn
	v: D^2 \setminus\{z_0\}  \to M,
	\qquad
    \begin{cases}
    \delbar_J v = 0, \\
    \int_{D^2\setminus\{z_0\}} |dv|^2 < \infty, \\
    v(z) \in L_{\chi(z)} \quad \text{for } \, z \in \del D^2 \setminus \{z_0\}.
    \end{cases}
    \eqnd
The count of such disks defines an element
    \eqnn
    c_{\cL}^{\chi} \in CF^*(L,L').
    \eqnd
\begin{figure}[ht]
	\eqnn
    			\xy
    			\xyimport(8,8)(0,0){\includegraphics[width=2in]{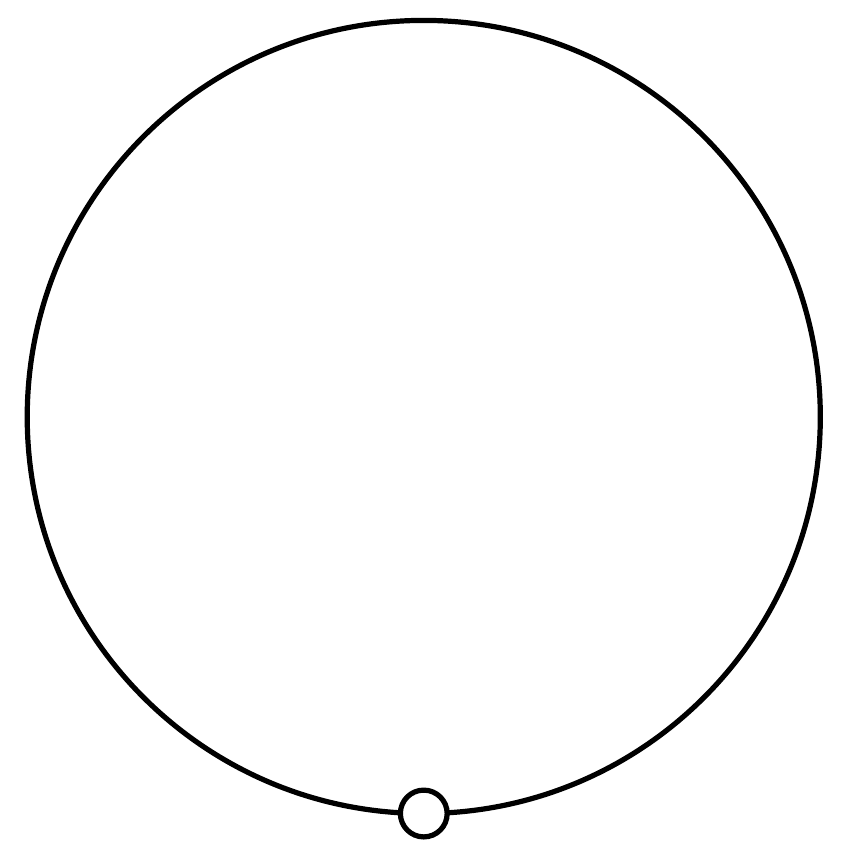}}
				,(4.1,1)*{-\infty}
				,(7.5,2)*{L}		
				,(0.4,2)*{L'}
				,(4,8.2)*{\cL}
    			\endxy
    \eqnd
    \begin{figurelabel}\label{figure. continuation disk}
    A holomorphic disk with one boundary puncture and with a moving boundary condition given by a non-negative isotopy $\cL$. The count of such disks gives rise to an element of $CF(L,L')$.
    \end{figurelabel}
\end{figure}

We prove (in the Liouville setting) that the two constructions above yield equivalent
elements in cohomology after applying the $\mu^2$ operation (i.e., after composing in the Fukaya category):

\begin{theorem}\label{thm:hcL=mu2ccL} Let $M$ be a Liouville manifold  (or sector). Let
$h_{\cL}^{\rho}$ and $c_{\cL}^{\chi}$ be as above. Then
        $$
        [h_{\cL}^{\rho}] = [\mu^2(c_{\cL}^{\chi}, -)]
        $$
        as maps $HF^*(K, L) \to HF^*(K,L')$.
\end{theorem}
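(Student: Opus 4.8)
The plan is to compare both constructions to a single master moduli space: holomorphic disks with \emph{two} boundary punctures, one where a strip-like end carries the fixed Lagrangian $K$ (contributing to $\hom(K,-)$) and one where the moving boundary condition $\cL$ is elongated. Concretely, I would set up the moduli space $\cM(D^2 \setminus \{z_0, z_1\}; K, \cL^\psi)$ of maps from the twice-punctured disk, where one puncture $z_1$ is an input marked with $K$-boundary conditions on both adjacent arcs (so it records a chord $K \cap L$), the other puncture $z_0$ is an output marked with $L'$ on one arc and $K$ on the other, and the two arcs not adjacent to $K$ carry the moving condition $L_{\psi(z)}$ for an elongation function $\psi$ that interpolates the ``strip profile'' and the ``disk-with-one-puncture profile.'' This moduli space is one-dimensional in the relevant index, and its compactification is an interval whose two ends recover the two sides of the claimed equality.

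First I would verify Gromov compactness and the $C^0$ a priori estimate for this moduli space: because $\cL$ is non-negative at infinity (Definition~\ref{defn. non negative wrapping}) and $M$ is Liouville (or a sector), the strong maximal principle applies exactly as in the individual cases used to define $h_\cL^\rho$ and $c_\cL^\chi$, so no sequence escapes to infinity and no sphere or disk bubbles off into the boundary at infinity. Second, I would analyze the codimension-one boundary strata of the compactified moduli space. These come in two families: (i) the strip-like end at $z_1$ degenerates, producing $\mu^2(-, -)$ applied to the output of a one-punctured disk, i.e. the composition $\mu^2(c_\cL^{\chi}, -)$; (ii) the two punctures $z_0, z_1$ come together / the disk breaks along a chord of $K$ with $L_\cdot$ in a way that reconstructs the moving-strip equation \eqref{eq:moving-intro}, yielding $h_\cL^{\rho}$. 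A gluing argument (standard, building on the transversality already established for the component moduli spaces, with a generic choice of $\psi$, $J$, and the two elongation functions) shows these are genuinely all the ends, and that the count of boundary points is zero mod the differential. Summing the signed count gives $h_\cL^{\rho} \pm \mu^2(c_\cL^{\chi}, -) = \partial(\text{something}) + (\text{something})\partial$ on the chain level, hence $[h_\cL^{\rho}] = [\mu^2(c_\cL^{\chi}, -)]$ in cohomology.

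The subtle points are bookkeeping rather than analytic: one must choose the family of elongation functions $\psi_s$ (parametrized by the interval) so that at $s = 0$ it literally restricts to the $\rho$-profile of \eqref{eq:moving-intro} away from a collar of $z_1$, and at $s = 1$ it restricts to the $\chi$-profile of the one-punctured disk away from a collar of the breaking edge; matching these profiles under the gluing maps (and checking the elongations stay non-negative throughout) is what makes the boundary identification exact rather than merely homotopic. One should also confirm that the $K$-input puncture can be made a true strip-like end compatible with the generators of $CF^*(K,L)$, which is where the ``general position of $K$ with respect to $L_0, L_1$'' hypothesis is used.

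The main obstacle I expect is \textbf{the interpolation of elongation functions together with maintaining the maximal principle across the whole family}: one needs a single $1$-parameter family of domain-dependent data (almost complex structures and the moving boundary profile) that is simultaneously regular for all $s$, restricts correctly at the two endpoints, and stays non-negative at infinity so that compactness is uniform in $s$. Everything else is a routine (if lengthy) adaptation of the standard continuation-map and gluing formalism.
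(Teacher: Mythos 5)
Your overall strategy is the same as the paper's: build a one-parameter family of moduli spaces interpolating between the moving-boundary strip equation and a configuration where the isotopy bubbles off as a once-punctured disk, and read the chain homotopy $h_\cL^\rho - \mu^2(c_\cL^\chi,-) = \mu^1\cH + \cH\mu^1$ off the boundary of the compactified one-dimensional parametrized moduli space. The paper parametrizes this family by the cross-ratio $\mathfrak{r}$ on $\overline{\cM}_4$ rather than by an abstract interval of profiles $\psi_s$, but that is the same idea.

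There is, however, a genuine gap at the degenerate end, and it is precisely the point the paper spends most of its effort on. When the elongation profile concentrates, the component that carries the moving condition $\cL$ in the Gromov--Floer limit is a disk with a \emph{single} boundary puncture --- an unstable domain with a two-dimensional reparametrization group. So one cannot simply assert that ``the compactification is an interval whose two ends recover the two sides'' or that a generic choice of $\psi_s$ and $J$ makes the parametrized space a compact $1$-manifold with the claimed ends: the limit of an unstable-domain bubble is not well defined without first stabilizing the domain. The paper's fix is to add two extra boundary marked points $z_2,z_3$ (so the bubble $(D^2,(\zeta,z_2,z_3))$ is stable), let the cross-ratio $\mathfrak{r}([z_0,z_1,z_2,z_3])$ serve as the deformation parameter, and then cut the dimension back down by requiring $v(z_2),v(z_3)$ to hit transversal slices $S^\ell_2,S^\ell_3$ (Lemma~\ref{lem:faraway}, Proposition~\ref{prop:slice}); making the slices work uniformly over the whole family forces an additional step of subdividing the isotopy into $C^\infty$-small pieces. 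None of this appears in your proposal, and your stated ``main obstacle'' (interpolating elongations while preserving the maximum principle) is not where the difficulty actually lies --- non-negativity of $\cL$ handles that end of things uniformly. Separately, your enumeration of the boundary strata is garbled: $h_\cL^\rho$ is recovered at the \emph{non-degenerate} endpoint of the family (where the profile equals $\rho$), not by ``the two punctures coming together''; the bubbling end gives $\mu^2(c_\cL^\chi,-)$; and the degenerations occurring at interior parameter values are strip breakings at the two punctures, which are exactly what produce the $\mu^1\cH+\cH\mu^1$ terms you need but omit from your list. (Also, near the input puncture the two adjacent arcs carry $K$ and $L_0$, not $K$ on both sides.)
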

This compatibility is well-known to experts, but a detailed proof is not easily found in the literature. We prove it here in the Liouville setting, and reassure the readers that the analogous results can be proven in other settings with appropriate modifications (as necessary) taken to prove $C^0$ and energy estimates.

Theorem~\ref{thm:hcL=mu2ccL} implies its analogue in the setting of Liouville bundles, where one allows $K$ to be in a possibly different fiber. However, the proof of this implication requires a detailed proof of the
$C^0$-estimates and the energy
estimates needed for the compactness study of moduli spaces of pseudoholomorphic sections; in proving a bundle analogue of Theorem~\ref{thm:hcL=mu2ccL}, we also highlight
the roles of nonnegativity of the isotopy and of the ``pinchedness'' (from below) of the curvature of
Liouville bundles. (See Theorem~\ref{thm:C0estimate+CL} for the precise statement.)

\subsection{More motivations}\label{section. motivating continuation compatibility}

While Theorem~\ref{thm:hcL=mu2ccL} is of obvious practical importance, let us mention how we utilize the result in later work.

Continuation maps play a central role in passing from the unwrapped, directed Fukaya category $\cO_j$ to the {\em wrapped} Fukaya category. In~\cite{oh-tanaka-actions} (following unpublished ideas of Abouzaid-Seidel as described in~\cite{gps}), we define the wrapped Fukaya category $\cW_j$ by localizing $\cO_j$ along continuation maps. Informally, this means that we algebraically declare Lagrangians related by a non-negative Hamiltonian isotopy to be equivalent objects. As it turns out, this has the consequence that morphisms in the wrapped category can be computed geometrically, by constructing a complex that can be modeled as an increasing union over Floer complexes of larger and larger non-negative Hamiltonian isotopies. We refer the reader to~\cite{gps} and~\cite{oh-tanaka-actions} for details.

Now, for our purposes: We must utilize our compatibility result in Theorem~\ref{thm:hcL=mu2ccL} to actually give a workable description of $\cW_j$ and to perform computations in $\cW_j$. This allows us to promote Theorem~\ref{theorem. O is Aoo} to a functor $\cW$ from $\simp(B)$ sending all $\iota$ to an {\em equivalence} of $A_\infty$-categories (not just fully faithful functors); this exhibits the local system, and the proof of Teleman's conjecture. These details are in~\cite{oh-tanaka-actions}.

Let us now remark why we view Theorem~\ref{thm:hcL=mu2ccL} as giving a ``workable'' framework for setting up $\cW_j$. The issue is that the strip definition of continuation maps does not naively define a map form $L$ to $L'$ in the directed, unwrapped category. Usually, one produces an element of $HF(L,L')$ out of a Hamiltonian isotopy by proving the naturality of $h_{\cL}$ in the $K$ variable, then invoking the Yoneda embedding. But to geometrically interpret the Yoneda embedding, one needs a geometric interpretation of the unit map of an object---such an interpretation is {\em unavailable} in the directed Fukaya category, as the identity morphism is constructed by formal algebra, rather than by defining the actual endomorphism Floer complex of a brane.

Finally, we hope that our description of what {\em choices} are needed to specify a collection of unwrapped Fukaya categories living over a Liouville bundle, while exhibiting the existence of compatible choices (to simultaneously output the usual $A_\infty$-relations and to jive with face maps of the simplices $j$) may be of interest to readers looking to exploit Floer theory in the bundle setting.

{\bf Acknowledgments.}
The first author is supported by the IBS project IBS-R003-D1.
The second author was supported by
IBS-CGP in Pohang, Korea and
the Isaac Newton Institute in Cambridge, England,
during the preparation of this work. This material is also based upon work supported by the National Science Foundation under Grant No. DMS-1440140 while the second author was in residence at the Mathematical Sciences Research Institute in Berkeley, California, during the Spring 2019 semester.

\section{Liouville bundles and connections}

\subsection{Liouville domains}

The notion of Liouville domain will not make a frequent appearance in our work; but it is a convenient stepping stone to the notion of Liouville manifold.

\begin{defn}\label{defn:liouville vector field Z}
Fix a symplectic manifold $(M,\omega)$.
A vector field $Z$ on $M$ is said to be a {\em Liouville vector field} if the Lie derivative of $\omega$ along $Z$ is $\omega$ itself:
	\eqn\label{eqn:Liouville Z}
\cL_Z\omega = \omega.
	\eqnd
Given a Liouville vector field $Z$, its flow will be called the {\em Liouville flow}.
\end{defn}

\begin{defn}\label{defn. liouville form}
Given a Liouville vector field $Z$, let $\lambda$ be the 1-form defined by the equation
	\eqn\label{eqn:Liouville form lambda}
	\lambda = \omega(Z,\cdot).
	\eqnd
We call $\lambda$ the {\em Liouville form}.
\end{defn}

\begin{remark}\label{remark:liouville-implies-exact}
Fix a vector field $Z$ and its dual $\lambda$ as in Equation~\eqref{eqn:Liouville form lambda}. Then
Equation~\eqref{eqn:Liouville Z} is equivalent to the condition that $\lambda$ is an anti-derivative of $\omega$:
	\eqnn
	\omega = d \lambda.
	\eqnd
In particular, any symplectic manifold equipped with a Liouville vector field is an exact symplectic manifold. Conversely, given a 1-form $\lambda$ satisfying $d\lambda = \omega$, one sees that the dual vector field defined by ~\eqref{eqn:Liouville form lambda} is automatically a Liouville vector field.
\end{remark}

\begin{defn}\label{defn:liouville-domain}
A \emph{Liouville domain} is a compact symplectic manifold $W$ with boundary, equipped with a Liouville vector field $Z$ which points strictly outward along the boundary.
\end{defn}

\begin{remark}
By the exactness witnessed in Remark~\ref{remark:liouville-implies-exact}, any Liouville domain $W$ must have non-empty boundary unless $W$ is 0-dimensional.
\end{remark}

\begin{notation}[The boundary $\del_\infty W$ of a Liouville domain]
Fix a Liouville domain $W$ (Definition~\ref{defn:liouville-domain}). We let $\del_\infty W$ be the boundary manifold.
\end{notation}

\begin{remark}\label{remark. boundary of liouville domain is contact}
Let $W$ be a Liouville domain. It follows that $\xi = \ker \lambda|_{\del_\infty W}$ is a contact structure on $\del_\infty W$.
\end{remark}

\begin{remark}[Co-orientation]
Recall that a {\em co-orientation} on a contact manifold is a choice of 1-form whose kernel is equal to the contact distribution.  We see that the boundary $\del_\infty W$ of any Liouville domain is a contact manifold co-oriented by $\lambda|_{\del_\infty W}$.
\end{remark}

\subsection{Symplectizations}

\begin{notation}[Symplectization $SY$ of a contact manifold]\label{notation. symplectization}
Given a co-oriented contact manifold $(Y,\alpha)$, its symplectization $SY$ is the manifold
	\eqnn
	SY = \RR \times Y
	=
	\{(s,y)\}.
	\eqnd

We equip $SY$ with the Liouville form (Definition~\ref{defn. liouville form})
	\eqnn
	e^s \pi^* \alpha
	\eqnd
where $\pi: SY \to Y$ is the projection map.
\end{notation}

\begin{notation}[$r$ and $s$]\label{notation. r in symplectization}
We will often use the change of coordinates
	\eqnn
	r = e^s.
	\eqnd
\end{notation}

\subsection{Liouville manifolds}

We now pass from the setting of a Liouville domain to a ``manifold-with-conical-end'' setting:

\begin{notation}\label{notation. Liou equivalence}
Let $M$ be a smooth manifold. We define an equivalence relation on the set of smooth 1-forms on $M$ as follows: We say $\theta \sim \theta'$ if and only if there exists a smooth, compactly supported function $f: M \to \RR$ for which
	\eqnn
	\theta = \theta' + df.
	\eqnd
We let
	$
	[\theta]_{\Liou}
	$
denote the equivalence class of $\theta$.
\end{notation}

\begin{defn}[Liouville manifold]\label{defn:liouville manifold}
Fix the data of a pair $(M,[\theta]_{\Liou})$, where $[\theta]_{\Liou}$ is as in Notation~\ref{notation. Liou equivalence}. We say this pair is a \emph{Liouville manifold} if for some (and hence any) choice $\theta \in [\theta]$, the pair $(M,\theta)$ is a completion of
a Liouville domain.
\end{defn}

\begin{remark}\label{remark. completion}
Let us explain what we mean by a completion. We mean there exists a compact, co-oriented contact manifold $(Y,\alpha)$, and a map from the `positive half' of $SY$
	\eqn\label{eq:iota}
\iota: \RR_{s \geq 0} \times Y \to M
	\eqnd
such that
	\enum
	\item $\iota$ respects Liouville forms, i.e., $\iota^*(\theta) = e^s \pi^*\alpha$,
	\item $\iota$ is a diffeomorphism (of manifolds with boundary) onto its image, and
	\item The complement $M \setminus \iota(\RR_{>0} \times Y)$ is a Liouville domain when equipped with (the restriction of) $\theta$.
	\enumd
\end{remark}

\begin{remark}
We have reserved $M$ to denote possibly-non-compact exact symplectic manifolds, while $W$ always denotes (compact) Liouville domains.
\end{remark}

\begin{remark}
It is common to define a Liouville manifold as equipped with a choice of $\theta$, rather than just of $[\theta]_{\Liou}$. While we utilize particular choices of $\theta$ to perform certain geometric constructions, a particular choice conceals the appropriate notion of automorphism. (See Definition~\ref{defn:liouville-automorphism}.)
\end{remark}

\begin{remark}
One may pass freely between a Liouville domain to a Liouville manifold (by completion), and vice versa (by choosing an $\iota$ as in~\eqref{eq:iota}). However, the notion of Liouville manifold will be more canonical---e.g., less choice-dependent---in our applications.
\end{remark}

\subsection{Liouville sectors}\label{section. liouville sectors}

\begin{remark}
The notion of Liouville sector is due to~\cite{gps}, and extends the notion of Liouville manifold to the setting with boundary.
\end{remark}

Just as Liouville manifolds are naturally presented as completions of exact symplectic manifolds with boundary, a Liouville sector is naturally the completion of an exact symplectic manifold $W$ with {\em corners}.

\begin{defn}[Liouville domain with convex boundary]\label{defn. convex domain}
Fix a compact exact symplectic manifold $(W,\theta)$ with corners. We let $D W$ denote the entire boundary of $W$---i.e., the union of all faces and corners of $M$.

We say the pair $(W,\theta)$ is a {\em Liouville domain with convex boundary} if the following are satisfied:

\newenvironment{convex-domain-props}{
	  \renewcommand*{\theenumi}{(CB\arabic{enumi})}
	  \renewcommand*{\labelenumi}{(CB\arabic{enumi})}
	  \enumerate
	}{
	  \endenumerate
}

\begin{convex-domain-props}
\item \label{item. DW has two boundaries} (There are two kinds of boundary.) $D W$ admits two smooth, codimension zero submanifolds-with-boundary $\del W$ and $\del_\infty W$ such that
	\eqnn
	\del_\infty W \cap \del W
	\eqnd
	is precisely the locus of corners of $W$, and
	\eqnn
	DW = \del_\infty W \bigcup_{\del_\infty W \cap \del W} \del W.
	\eqnd
\item ($\del_\infty W$ is contact.) We demand that the Liouville vector field $Z$ is strictly outward-pointing with respect to $\del_\infty W$. In particular, $\theta|_{\del_\infty W}$ renders $\del_\infty W$ a (co-oriented) contact manifold with boundary.
\item ($\del W$ is convex.) \label{item. del W convex} We demand that there exists a smooth function $I: W \to \RR$ satisfying $ZI = \alpha I$ for some $\alpha> 0$ whose Hamiltonian flow along $\del W$ is strictly outward pointing. (See \cite[Definition 2.4]{gps}.)
\item\label{item. Z parallel to del W} (The $\del W$ boundary can be extended along $Z$.) For simplicity, we will further assume that in some neighborhood of $\del_\infty W$, $Z$ is contained in $T(\del W)$. (So near $\del_\infty W$, $Z$ is tangent to $\del W$. One can always deform $\theta$ so that this is the case).
\end{convex-domain-props}
\end{defn}

\begin{remark}
Let $(W,\theta)$ be a Liouville domain with convex boundary (Definition~\ref{defn. convex domain}).
Using the notation from ~\ref{item. DW has two boundaries}, one may informally think of $\del W$ as the {\em wall} of $W$, while one may think of $\del_\infty W$ as the {\em ceiling}. (There are no floors.)
	
Based on~\ref{item. Z parallel to del W}, the reader may imagine that the Liouville flow pushes the ceiling higher toward the sky, in a way such that the walls may similarly be extended upward.	The Liouville flow may push on the walls inwards or outwards, but it only does so away from a neighborhood of the ceiling.
\end{remark}

The reader should compare the following to the definition of Liouville domain (Definition~\ref{defn:liouville manifold}). It is equivalent to Definition~2.4 of~\cite{gps}.

\begin{defn}
Fix a pair $(M,[\theta]_{\liou})$ where $M$ is a smooth manifold with boundary. We say that $(M,[\theta]_{\liou})$ is a {\em Liouville sector} if, for some (and hence all) $\theta \in [\theta]_{\liou}$, the pair $(M,\theta)$ is the completion of a Liouville domain with convex boundary.
\end{defn}

\begin{remark}
By a completion, we mean the data of a co-oriented contact manifold $(Y,\alpha)$ with boundary, and a map $\iota: \RR_{s \geq 0} \times Y \to M$ such that the appropriate analogues of Remark~\ref{remark. completion} are satisfied. In particular, $\iota$ is a diffeomorphism of smooth manifolds with corners, and the restriction of $\theta$ exhibits the complement $M \setminus \iota(\RR_{>0} \times Y)$ as a Liouville domain with convex boundary.
\end{remark}

\begin{remark}
Let $(M,[\theta]_{\liou})$ be a Liouville sector. Then $M$ only has one ``type'' of boundary, $\del M$, which one may think of as an extension of the wall $\del W$ by the Liouville flow.
\end{remark}

\begin{remark}
Henceforth, we use the term Liouville sector with the understanding that if a Liouville sector has empty boundary, then it is in particular a Liouville manifold.
\end{remark}

\subsection{Eventually conical branes}

\begin{defn}
A subset $A \subset M$ is called {\em conical near infinity} if for some (and hence all) $\theta \in [\theta]_{\liou}$, and for some compact subset $K$, the complement $A \setminus K$ is closed under the positive Liouville flow.
\end{defn}

There are standard decorations one should put on Liouville manifolds (or sectors) and their Lagrangians to obtain a $\ZZ$-graded, $\ZZ$-linear Fukaya category---for example, gradings and Pin structures. We assume these structures to be chosen throughout. To that end:

\begin{defn}\label{defn. branes}
Let $M$ be a Liouville manifold (or sector).
A {\em brane} is a conical-near-infinity Lagrangian $L \subset M$ equipped with the relevant brane decorations.
\end{defn}

Because brane structures will not feature prominently in this work, we refer the reader to~\cite{seidel-book} for the basics, and to Section~2.3 of~\cite{oh-tanaka-actions} for how the structure group of a Liouville bundle changes as one demands different brane structure.

\subsection{Non-negative isotopies}
We recall the notion of a nonnegative exact Lagrangian isotopy.

\begin{defn}[Non-negative isotopy]\label{defn. non negative wrapping}
Fix an exact Lagrangian isotopy $j: L \times [0,1]_t \to M$ through conical-near-infinity Lagrangians. (In particular, this induces an isotopy of Legendrians inside $\del_\infty M$.) We say this is a {\em non-negative wrapping}\footnote{In~\cite{gps}, this notion is called a positive wrapping (see Definition 3.20 of loc. cit.).}, or a {\em non-negative isotopy} if for some (and hence any) choice of Liouville form $\theta$ on $M$, we have the following outside a compact subset of $L$:
	\eqnn
	\theta (Dj(\del_t)) \geq 0.
	\eqnd
Put another way, the flow of $L$ in $\del_\infty M$ is non-negative with respect to the contact form induced by $\theta$.
\end{defn}

\subsection{Liouville automorphisms}

\begin{defn}[Liouville automorphisms]
\label{defn:liouville-isomorphism-M}
\label{defn:liouville-automorphism}
Let $M_i $,
$i = 0,\, 1$, be Liouville sectors. A {\em Liouville isomorphism} from $M_0$ to $M_1$ is a diffeomorphism $\phi: M_0 \to M_1$ satisfying
	\eqnn
	\phi^*[\theta_1]_{\Liou} = [\theta_0]_{\Liou}.
	\eqnd
(See Notation~\ref{notation. Liou equivalence}.)
If $M_0 = M_1$, we call $\phi$ a Liouville {\em automorphism}.
\end{defn}

\begin{definition}\label{defn:Liouaut}
Let $M$ be a Liouville sector.
We let
	\eqnn
	\Liouauto(M)
	\eqnd
denote the topological group of Liouville automorphisms of $M$. It is topologized as a subspace of $C^\infty(M,M)$ with the strong Whitney topology.
\end{definition}

\begin{warning}
The choice of $[\theta]_{\liou}$ is not explicit in the notation $\Liouauto(M)$.
\end{warning}

\subsection{Liouville bundles}

\begin{defn}[Liouville bundle]\label{defn. liouville bundle}
Fix a Liouville manifold  (or sector) $M$. A {\em Liouville bundle} with fiber $M$ is the choice of a smooth $M$-bundle $p: E \to B$, together with a smooth reduction of the structure group from $\diff(M)$ to $\Liouauto(M)$.
\end{defn}

\begin{remark}
Definition~\ref{defn. liouville bundle} applies when $p: E \to B$ is a smooth map of diffeological spaces (see \cite[Section 3.2]{oh-tanaka-smooth-approximation}), or smooth manifolds with corners. By a {\em smooth} reduction of structure group, we mean that for an open cover, the specified transition maps $U_{\alpha\beta} \to \Liouauto(M)$ must be smooth (in the sense of the diffeology on $\Liouauto(M)$ and the diffeology of $B$).
\end{remark}

\begin{remark}\label{remark. liouville bundles different structure group}
If one is interested in a Liouville bundle with a structure group allowing one to trivialize brane structures over simplices, one should demand a smooth reduction of structure group not to $\Liouauto$, but to another structure group $\Liouaut$. We refer the reader to Section~2.3 of~\cite{oh-tanaka-actions} for possible other structure groups. We also note that the smoothness of reduction may now be tested by composing a map to $\Liouaut$ with the natural projection $\Liouaut \to \Liouauto$.
\end{remark}

\begin{notation}[$\del E$]\label{notation. del E}
Let $E \to B$ be a Liouville bundle whose fibers are Liouville manifolds  (or sectors), and suppose these fibers are all isomorphic to some Liouville manifold (or sector) $M$. We denote by
	\eqnn
	\del E \to B
	\eqnd
the induced fiber bundle whose fibers are diffeomorphic to $\del M$. Note that we use the symbol $\del E$ regardless of whether the base $B$ has boundary, corners, et cetera.
\end{notation}

\begin{remark}[$\Theta$]\label{remark. can choose liouville form}
Let $E \to B$ be a Liouville bundle and suppose $E$ and $B$ are both smooth manifolds, possibly with corners. First let $B$ be smoothly contractible. Then there exists a global choice of 1-form
	\eqnn
	\Theta \in \Omega^1(E;\RR)
	\eqnd
such that:

\newenvironment{Theta-prop}{
	  \renewcommand*{\theenumi}{($\Theta$\arabic{enumi})}
	  \renewcommand*{\labelenumi}{($\Theta$\arabic{enumi})}
	  \enumerate
	}{
	  \endenumerate
}

\begin{Theta-prop}
	\item\label{item. Theta prop} for every $b \in B$, the fiberwise restriction $\Theta|_{E_b}$ defines a 1-form on the fiber $E_b$ exhibiting $E_b$ as a Liouville completion.
\end{Theta-prop}
By the paracompactness of $B$ and a partition of unity argument, we thus have a global 1-form $\Theta$ on $E \to B$  satisfying property~\ref{item. Theta prop} for arbitrary base manifolds $B$. We call $\Theta$ a \emph{fiberwise Liouville form} for $E \to B$, or just a Liouville form as long as there is no danger of confusion.
\end{remark}

\begin{remark}\label{remark. Theta space contractible}
Fix a Liouville bundle $E \to B$ where $B$ (and hence $E$) is a smooth manifold, possibly with corners. Then the space of $\Theta$ satisfying~ \ref{item. Theta prop} is convex, and in particular, smoothly contractible.
\end{remark}

\begin{example}\label{example. universal bundle}
Fix a Liouville manifold  (or sector) $M$. There will be two classes of Liouville bundles of interest associated to $M$.

The first is the {\em universal} Liouville bundle. (See \cite[Section 2.7]{oh-tanaka-smooth-approximation}.) This is constructed from the universal principle bundle
	\eqnn
	\widehat{E\Liouauto(M)}
	\to
	\widehat{B\Liouauto(M)}
	\eqnd
by taking the induced principle $M$-bundle 	
	\eqnn
	E \to \widehat{B\Liouauto(M)}
	\eqnd
(whose structure group is canonically smoothly reduced to $\Liouauto(M)$). The reader may appreciate that in, Remark~\ref{remark. can choose liouville form} we assumed $B$ is a smooth manifold---in the example of $B = \widehat{B\Liouauto(M)}$, $B$ is not a manifold.

The second main example is given by taking a smooth map from an extended smooth simplex
	\eqnn
	j: |\Delta^n_e| \to B
	\eqnd
where $|\Delta^n_e|$ is the affine hyperplane defined by the equation $\sum_{i=0}^n t_i = 1$.
(We refer to \cite[Definition 2.5]{oh-tanaka-smooth-approximation} for a discussion on why we use the extended smooth simplex.) We then pull back the bundle $E \to B$ to obtain a smooth Liouville bundle $j^*E \to |\Delta^n_e|$; in particular, one obtains another smooth Liouville bundle by restricting further to the standard $n$-simplex $|\Delta^n| \subset |\Delta^n_e|$.
\end{example}

\subsection{Connections on bundles}\label{section. Liouville connection on bundles}

\begin{defn}[Connection]\label{defn. connection}
Let $\pi: E \to B$ be a smooth fiber bundle.
Recall that a (Ehresmann) connection is a choice of splitting
	\eqn\label{eqn. ehresmann connection}
	TE \cong HTE \oplus VTE
	\eqnd
where $VTE = \ker(d\pi)$. As usual we will call $HTE$ the horizontal distribution (associated to the connection).
\end{defn}

Fix a Liouville bundle $\pi: E \to B$ over a smooth manifold $B$, and equip $E$ with a choice of global 1-form $\Theta \in \Omega^1(E)$ as in Remark~\ref{remark. can choose liouville form}. Then one has a natural connection on $\pi: E \to B$, defined as follows:

\begin{defn}\label{defn. theta connection}
The {\em connection associated to $\Theta$} is the subbundle of $TE$ consisting of those tangent vectors $x$ for which
	\eqnn
	VTE \subset \ker \left( d\Theta(-, x)\right).
	\eqnd
That is, any vertical tangent vector is annihilated when paired with $x$ using $d\Theta$.
\end{defn}

In particular, any Liouville bundle equipped with  $\Theta$ as in Remark~\ref{remark. can choose liouville form} has a well-defined notion of parallel transport along smooth curves.

\subsection{Almost complex structures}

\begin{defn}\label{defn. J conical near infinity}
Let $E \to B$ be a Liouville bundle. Let $\JJ$ be a smooth choice of fiber-wise almost complex structures on $E$.

We say that $\JJ$ is {\em conical near infinity} if for some (hence any ) choice of $\Theta$ (as in Remark~\ref{remark. can choose liouville form}), there exists a subset $K \subset E$, proper over $B$, such that the following holds:
\enum
	\item For each $b \in B$, $K \cap E_b$ is a Liouville domain (exhibiting the fiber $E_b$ as the Liouville completion of $K \cap E_b$), and
	\item Writing $E_b$ as the completion of $K \cap E_b$ with conical coordinate $r=e^s$, we have that	
	\eqnn
	\Theta|_{E_b} \circ \JJ|_{E_b} = d(e^s)
	\eqnd
	for $s>>0$.
\enumd
\end{defn}

\begin{remark}
If $\JJ$ is conical near infinity (Definition~\ref{defn. J conical near infinity}), it follows that along each fiber of $E \to B$, the Lie derivative of $\JJ|_{E_b}$ with respect to the Liouville flow vanishes outside some compact subset (for example, outside of $K \cap E_b$).
\end{remark}

\begin{example}\label{example. J for single Liouville manifold}
If $B$ is a point, then a choice of $\JJ$ as in Definition~\ref{defn. J conical near infinity} is a choice of conical-near-infinity almost-complex structure $J$ on the fiber Liouville manifold (or sector), in the usual sense.
\end{example}

\begin{notation}[$\cJ$]\label{notation. cJ vector bundle}
For every $b \in B$, let $\cS_b \subset B$ denote the Riemann surface containing $b$. Then over $E$ there is a natural bundle
	\eqnn
	\cJ \to E
	\eqnd
whose fibers above $x \in E$ consist of almost-complex structures on the vector bundle
	\eqn\label{eqn. disk-wise complex structures}
	(d\pi)^{-1}(T_b \cS_b) \subset T_x E.
	\eqnd
\end{notation}
	
\begin{remark}
Here is another description of $\cJ$. Let $B = \overline{\cS}_{d+1}^\circ \to \overline{\cR}_{d+1}$ denote the projection map for the universal family of curves, and let $\cH \subset TB$ denote the vertical tangent bundle of this projection. Fix further a Liouville form $\Theta$ on $\pi: E \to B$, so that we have an induced splitting $TE \cong HTE \oplus VTE$ as in~\eqref{eqn. ehresmann connection}. By the identification $HTE \cong \pi^* TB$, we have an induced subbundle $\pi^* \cH \oplus VTE \subset TE$. $\cJ$ is the bundle whose global sections are choices of almost-complex structures on $\pi^* \cH \oplus VTE$.
\end{remark}

\begin{defn}[$\JJ$ Suitable for counting sections]\label{defn. suitable for sections}
Let $B = \overline{\cS}_{d+1}^\circ$ and fix a Liouville bundle $\pi: E \to B$. Let $\cJ$ be the bundle from Notation~\ref{notation. cJ vector bundle}. We say that a global section $\JJ$ of $\cJ$ is {\em suitable for counting sections} when the following are satisfied:
\enum
\item \label{item. suitable. holomorphic projection} For every member of the universal family $\cS_r \subset \overline{\cS}_{d+1}^\circ$, let $E_r \to \cS_r$ denote the pulled back Liouville bundle. We demand that the projection map is holomorphic---that is,
	\eqnn
	d\pi \circ \JJ|_{E_r} = j_r \circ d\pi.
	\eqnd
(Here, $j_r$ is the complex structure on $\cS_r$.)
\item  $\JJ$ preserves the vertical tangent space $VTE$, and $\JJ|_{VTE}$ is a conical-near-infinity almost-complex structure for the bundle $E \to B$ as in Definition~\ref{defn. J conical near infinity}.
\item\label{item. suitable. eventually preserves horizontal} Finally, we demand that for some (and hence any) choice of global Liouville form $\Theta$ on $E$ as in Remark~\ref{remark. can choose liouville form}, there exists a subset $K \subset E$ (independent of  $r \in \overline{\cR}_{d+1}$), proper over $B$, such that $\JJ(HTE_r) = HTE_r$. (Here, $HTE_r$ is the horizontal tangent space induced by pulling back the connection on $E$ to a connection on $E_r$.)
\enumd
By abuse of notation, we will refer to $\JJ$ also as a choice of almost-complex structure. (Even though, strictly speaking, $\JJ$ only defines almost-complex structures on each $E_r$, and not on all of $E$.)
\end{defn}

\begin{remark}
Let $\JJ$ be an almost-complex structure suitable for counting sections (Definition~\ref{defn. suitable for sections}). Choose a splitting $T_xE_r \cong VT_xE \oplus T_{\pi(x)}\cS_r$, condition~\ref{item. suitable. holomorphic projection}. says that $\JJ_x$ may be written as a block triangular matrix. Condition~\ref{item. suitable. eventually preserves horizontal}. says that, outside controlled, fiber-wise compact subset, $\JJ_x$ is block diagonal. In particular the space of $\JJ$ is seen to be contractible.
\end{remark}

\subsection{Defining functions and barriers on families}

Let us recall from~\cite{gps} that if $M$ is a Liouville sector, there exists a smooth map
	\eqnn
	\pi: \nbhd(\del M) \to \CC_{\Re \geq 0}
	\eqnd
from a neighborhood of $\del M$ to the complex numbers with positive real coordinate. This (possibly non-surjective) map satisfies the following:
	\enum
	\item The imaginary coordinate of $\pi$ defines a smooth, linear-near-infinity function $I$ whose Hamiltonian vector field is outward pointing at $\del M$. (This is called a ``defining function'' in~\cite{gps}.)
	\item Moreover, there is a contractible space of almost-complex structures $J$ on $M$, compatible with the Liouville structure of $M$, such that $\pi$ is $J$-holomorphic.
	\enumd
	
\begin{remark}\label{remark. barrier}
This allows one to use a standard ``barrier'' type argument using the open mapping theorem to conclude the following: If a holomorphic curve $u: S \to M$ has boundary Lagrangians supported away from $\del M$, then the image of $u$ must be bounded away from $\del M$ as well. (See 2.10.1 of~\cite{gps}.) This is the main utility of the definition of Liouville sector, and in particular of the defining function $I$. (Informally, while $I$ defines the imaginary coordinate of $\pi$, its negative Hamiltonian flow-time away from $\del M$ defines the real coordinate---see the proof of Proposition~2.24 of~\cite{gps}.)
\end{remark}

\begin{remark}\label{remark. barrier in families}
Now if $E \to B$ is a Liouville bundle of Liouville sectors over a smooth manifold, a partition of unity argument defines a global function $\pi: \nbhd(\del E) \to \CC_{\Re \geq 0}$ whose imaginary part restricts on each fiber to a defining function $I$. ($\del E$ is defined in Notation~\ref{notation. del E}.) By contractibility of the space of almost-complex structures, one can choose $\JJ$ on $E$ such that
	\eqnn
	D\pi|_{VTE} \circ \JJ = j_{\CC_{\Re \geq 0}} \circ D\pi|_{VTE}
	\qquad
	\text{on $\nbhd(\del E)$}.
	\eqnd
(Here $VTE = \ker D \pi$ is the vertical tangent bundle of $E$.) In particular, given any map $u: S \to E$ which is holomorphic with respect to $J$, the composite $\pi \circ u: S \to \CC_{\Re \geq 0}$ is holomorphic, and the same barrier argument as in Remark~\ref{remark. barrier} shows that the image of $u$ must be bounded away from the boundary of each fiber Liouville sector.

In particular, if one has a priori $C^0$ bounds on the strip-like ends of $S$, then one has an a priori $C^0$ bound on $u$ given the boundary conditions.
\end{remark}

\clearpage

\section{Simplices, and families of disks}

\begin{notation}[Simplices]\label{notation. standard simplices}\label{notation. extended simplices}
Fix an integer $d \geq 0$. We let $|\Delta^d|$ denote the standard topological $d$-dimensional simplex, given by the subset of those $(t_0,\ldots,t_d) \in \RR^{d+1}$ satisfying $t_i \geq 0$ and $\sum t_i = 1$. More generally, given any linearly order set $A$, we let $|\Delta^A|$ denote the subset of $\RR^A$ given by those $(t_a)_{a \in A}$ satisfying $t_a \geq 0$ and $\sum_{a \in A} t_a =1$.

We will sometimes refer to $|\Delta^A|$ as the geometric realization of $A$.

The {\em extended} $d$-simplex is the space $|\Delta^d_e| \subset \RR^{d+1}$ of those $(t_0,\ldots,t_d) \in \RR^{d+1}$ satisfying $\sum t_i = 1$. It is abstractly homeomorphic to $\RR^d$.
\end{notation}

\begin{defn}[$i$th vertex]\label{defn. ith vertex}
Let $|\Delta^d|$ be a standard simplex. Given $i \in \{0, \ldots, d\}$, the {\em $i$th vertex of $|\Delta^d|$} is the unique point whose $i$th coordinate is equal to 1.

Likewise, if $|\Delta^d_e|$ is the extended simplex, the $i$th vertex is the same point (with $i$th coordinate 1 and other coordinate 0).
\end{defn}

\begin{remark}[Standard and extended simplices]\label{remark. standard vs extended simplices}
Because the natural inclusion $|\Delta^n| \to |\Delta_e^n|$ is a smooth map (from a manifold with corners), it will make sense to pullback smooth objects living over an extended simplex to a standard simplex.

Finally, we will do our best to use the letter $h$ to denote maps from a standard simplex, and the letter $j$ to denote maps from the extended simplex:
	\eqnn
	h: |\Delta^n| \to B,
	\qquad
	j: |\Delta_e^n| \to B.
	\eqnd
\end{remark}

We review Savelyev's observation from~\cite{savelyev} that two fundamental objects of our fields---(i) universal families of holomorphic disks with $k + 1$ boundary punctures, and (ii) standard simplices---have compatible operadic structures.

Let us explain how we use this observation. Our goal (Theorem~\ref{theorem. O is Aoo}) is to associate, for every smooth map
	\eqnn
	j: |\Delta_e^n| \to B
	\eqnd
and every Liouville bundle $E \to B$, a non-wrapped Fukaya category $\cO_j$. This means that given $j$ and a Liouville bundle, we must associate a $d$-ary $A_\infty$ operation for every $(d+1)$-tuple of objects. In a way we make explicit later, this is done by taking a map
	\eqnn
	|\Delta^d| \to |\Delta^n|
	\eqnd
induced by the $(d+1)$-tuple of objects, and noticing that the $d$-simplex $|\Delta^d|$ itself can be (modulo a neighborhood of the boundary) {\em identified with the total space} of the universal family
	\eqnn
	\cS_{d+1} \to \cR_{d+1}
	\eqnd
of $(d+1)$-ary holomorphic disks (see Notation~\ref{notation:universal-family} and Remark~\ref{remark:simplices are homeo to families of disks}). At the very end of the present section, we will choose such an identification $|\Delta^d| \approx \cS_{d+1}$ once and for all. (Here, we use $\approx$ rather than $\cong$ to indicate that this is an identification modulo boundary.)

Roughly speaking, we will then define the $d$-ary operation $m_d$ to be given by counts of holomorphic sections $u$ (with Lagrangian boundary conditions) from fibers of $\cS_{d+1}$ to the bundle obtained by pulling back $E$ along the composite $\cS_{d+1} \approx |\Delta^d| \to |\Delta^n| \to |\Delta_e^n| \to B$:
	\eqnn
	\xymatrix{
	E|_{\cS_r} \ar[rrr] \ar[d]
	&&& E \ar[d] \\
	\cS_r \ar[r] \ar@{-->}@/^/[u]^u
		& \cS_{d+1} \approx |\Delta^d| \ar[r]
		& |\Delta^n| \to |\Delta_e^n| \ar[r]
		& B.
	}
	\eqnd
Here, $\cS_r \subset \cS_{d+1}$ is a holomorphic disk with $d+1$ boundary marked points; it is the fiber above $r \in \cR_{d+1}$.

The reader may now appreciate that for such counts to satisfy the $A_\infty$-relations, one must impose some compatibilities on the structures chosen to define these operations---and especially the identifications $\cS_{d+1} \approx |\Delta^d|$---as one approaches the boundary moduli of nodal disks. To articulate these compatibilities, we will also be forced to choose maps
	\eqnn
	\nu_{\beta}: \overline{\cS}_{d+1}^\circ \to |\Delta^n|
	\eqnd
for each simplicial map $\beta: |\Delta^d| \to |\Delta^n|$. (See Notation~\ref{notation:universal-family} for the notation $\overline{\cS}^\circ_{d+1}$.) Moreover, we would later like these non-wrapped Fukaya categories to be functorial in the choice of $j$, meaning that if we have a simplicial inclusion $|\Delta_e^{n'}| \subset |\Delta_e^n|$,  the composite map $j': |\Delta_e^{n'}| \to |\Delta_e^n| \xrightarrow{j} B$, induces a functor $\cO_{j'} \to \cO_{j}$ of non-wrapped Fukaya categories.
This imposes further compatibilities on our choices.

The main purpose of this section is to define what these compatibilities are in terms of the maps $\nu_\beta$, which for the special case of $\beta = \id$ recovers the identifications $\cS_{d+1} \cong |\Delta^d|$. This is given in Definition~\ref{defn. natural system}. We record the existence of such choices in Proposition~\ref{prop:natural systems exist}.

\subsection{Universal families of curves and gluing along strip-like ends}\label{section. curves operad}

\begin{notation}[$\overline{\cR}, \overline{\cS}, \overline{\cS}^\circ$.]\label{notation:universal-family}
Let $\overline{\cR}_{d+1}$ denote the compactified moduli space of holomorphic disks with $d+1$ boundary punctures; we demand that one of these boundary punctures is distinguished, and we refer to it as the outgoing marked point, or the 0th marked point. Using the boundary orientation of a holomorphic disk, any other marked point may uniquely be labeled as the $i$th marked point for some $1 \leq i \leq d$.

We let $\overline{\cS}_{d+1} \to \overline{\cR}_{d+1}$ denote the universal family of (possibly nodal) disks living over $\overline{\cR}_{d+1}$.  Note that a fiber is never compact; every disk---nodal or not---has boundary punctures.

Finally, we let $\overline{\cS}_{d+1}^\circ \subset \overline{\cS}_{d+1}$ denote the open subspace obtained by removing the nodal points of each fiber.

For any $r \in \overline{\cR}_{d+1}$, we let $\cS_r \subset \overline{\cS}_{d+1}^\circ$ denote the fiber above $r$.
\end{notation}

\begin{example}
If $d=2$, then $\overline{\cR}_{2+1}$ is homeomorphic to a single point. $\overline{\cS}_{2+1}$ is homeomorphic to a closed disk with three boundary points missing, as is $\overline{\cS}_{2+1}^\circ$.

If $d=3$, then $\overline{\cR}_{3+1}$ may be identified a closed unit interval $[0,1]$. The universal family $\overline{\cS}_{3+1} \to [0,1]$ has the property that the fiber over any element of the open interval $(0,1)$ is homeomorphic to a closed disk minus four boundary points. Over either endpoint---$0$ or $1$---the fiber is a wedge sum of two disks with two boundary points missing on each disk; in each fiber, the wedge point is the nodal point. Finally, the space $\overline{\cS}_{3+1}^{\circ}$ is obtained by removing exactly two points (the nodal points---one nodal point from each boundary element of $[0,1]$) from $\overline{\cS}_{3+1}$.

More generally, $\overline{\cS}_{d+1}^{\circ}$ is obtained from $\overline{\cS}_{d+1}$ by removing $i$ wedge points (i.e., $i$ nodal points) from every fiber living over a codimension $i$ stratum of $\overline{\cR}_{d+1}$.
\end{example}

\begin{choice}[Strip-like ends $\epsilon$]\label{choice:strip like ends}
We assume we have chosen strip-like ends near the nodes and boundary marked points of each fiber of $\overline{\cS}_{d+1}^\circ \to \overline{\cR}_{d+1}$. See Sections~(8d), (9a), and~(9c) of~\cite{seidel-book}.

We denote these strip like ends $\epsilon$ when necessary.

We assume we have also chosen diffeomorphisms $|\Delta^1| \cong [0,1]$ once and for all, so that the strip like ends are biholomorphic embeddings
	\eqnn
	\epsilon: [0,\infty) \times |\Delta^1|  \to \cS_r
	\qquad
	\text{or}
	\qquad
	\epsilon: (-\infty, 0] \times |\Delta^1| \to \cS_r.
	\eqnd
We denote by $\epsilon_i$ the strip-like end at the $i$th puncture.
\end{choice}

\begin{notation}[$\circ_i$]
Recall that every codimension one stratum of $\overline{\cR}_{d+1}$ can be written as a direct product $\overline{\cR}_{d_2 +1} \times \overline{\cR}_{d_1 +1}$; indeed, for a given $d_1$, and for any $1 \leq i \leq d_1$, there is an $i$th wedging map
	\eqn\label{eqn. R-gluing}
	\circ_i:
	\overline{\cR}_{d_2+1} \times \overline{\cR}_{d_1 +1} \to \overline{\cR}_{d+1},
	\qquad
	d_2 + d_1 - 1 = d
	\eqnd
which glues the 0th boundary vertex of a disk with $d_2+1$ marked points to the $i$th boundary vertex of a disk with $d_1 + 1$ marked points. For $r_1 \in \overline{\cR}_{d_1 +1}$ and $r_2 \in \overline{\cR}_{d_2+1}$, we let $r_2 \circ_i r_1$ denote the image.

We will also write $\cS_{r_2} \circ_i \cS_{r_1}$ for the corresponding nodal disk.

\end{notation}

Finally, because of our choice of strip-like ends, we can parametrize the corners of $\overline{\cR}_{d+1}^\circ$; for instance, in codimension one, the maps from~\eqref{eqn. R-gluing} extend to maps
	\eqn\label{eqn. R-corner parametrization}
	\overline{\cR}_{d_2+1}^\circ \times \overline{\cR}_{d_1 +1}^\circ \times [0, \epsilon)\to \overline{\cR}_{d+1}^\circ,
	\qquad
	d_2 + d_1 - 1 = d	
	\eqnd
(the dependence on $1 \leq i \leq d_1$ is suppressed in the above notation).
See also Sections~(9e) and~(9f) of~\cite{seidel-book}.

Our main interest is in a lift of \eqref{eqn. R-corner parametrization}:

\begin{notation}[$\#_i$ and $\#_{i,\tau}$]
Let $\overline{\sS}_{d_2,d_1} \to \overline{\cR}_{d_2 + 1} \times \overline{\cR}_{d_1 + 1} \times [0,\epsilon)$ denote the map obtained by pulling back $\overline{\cS}_{d_1+1}$ and $\overline{\cS}_{d_2+1}$ along the projections to $\overline{\cR}_{d_1+1}$ and $\overline{\cR}_{d_2+1}$, then taking the coproduct of these two pullbacks. Concretely, a fiber of $\overline{\sS}_{d_2,d_1}$ over $(r_2,r_1,\tau)$ is the disjoint union $\cS_{r_2} \coprod \cS_{r_1}$.  Then the gluing operation induced by the strip-like ends defines a map
	\eqn\label{eqn. S-corner parametrization}
	\#_i: \overline{\sS}_{d_2,d_1} \times [0,\epsilon) \to \overline{\cS}_{d+1}^\circ,
	\qquad
	d_2 + d_1 - 1 = d	
	\eqnd
where the restriction of $\#_i$ to time $\tau \in [0,\epsilon)$ will be denoted by $\#_{i,\tau}$.
\end{notation}

\begin{remark}
Note the font $\overline{\sS}_{d_2,d_1}$ rather than $\overline{\cS}_{d_2,d_1}$; we use the former because Savelyev uses the latter font to indicate a different entity in~\cite{savelyev}.
\end{remark}

\begin{notation}[$\#_{i,\tau}$]
Let us describe $\#_{i,\tau}$ for the sake of establishing further notation. Fix $\tau \in [0,\epsilon)$ and elements $r_1 \in \overline{\cR}_{d_1+1}, r_2 \in \overline{\cR}_{d_2+1}$. Having fixed our strip-like ends long ago, the $i$th gluing operation identifies two open subsets of $\cS_{r_1}$ and $\cS_{r_2}$ to obtain a new disk $\cS_{r}$. The strip-like ends endow $\cS_r$ with a thick-thin decomposition, where we can holomorphically identify the ``thin'' region of $\cS_r$ with $(-\tau,\tau) \times |\Delta^1|$, and these thin regions are precisely the regions where the gluing operation has non-singleton fibers (i.e., this is the region over which $\cS_{r_1}$ and $\cS_{r_2}$ are glued). The gluing maps
	\eqnn
	\cS_{r_2} \coprod \cS_{r_1} \times \{\tau\} \to \cS_r
	\eqnd
(where $r$ depends on $\tau$) define the maps $\#_{i,\tau}$. When $\tau = 0$, we have a map
	\eqn\label{eqn. sharp 0}
	\#_{i,0} : \cS_{r_2} \coprod \cS_{r_1} \to \cS_{r_2} \circ_i \cS_{r_1}.
	\eqnd
\end{notation}

\subsection{Simplices and inserting posets}

Now let us consider the simplicial analogue of the previous section's constructions.

\begin{notation}
Fix an integer $d \geq 0$. We let $[d]$ denote the linear poset given by
	\eqnn
	[d] = \{0 < 1 < \ldots < d\}.
	\eqnd
It is the unique linear order with $d+1$ elements (up to unique choice of order-preserving isomorphism).
\end{notation}

\begin{notation}[$A_2 \circ_i A_1$]
Let $A_2$ and $A_1$ be finite, non-empty, linearly ordered posets. We let $d_1 = \#A_1 - 1$. Then for any $1 \leq i \leq d_1$, we can construct a new linear poset
	\eqnn
	A_2 \circ_i A_1
	\eqnd
by gluing $A_2$ into $A_1$ as follows: Identify $\min A_2$ with the $(i-1)$st element of $A_1$, and identify $\max A_2$ with the $i$th element of $A_1$. We see that $A_2 \circ_i A_1 \cong [d_2 + d_1 - 1]$ as posets.
\end{notation}

We have a natural gluing map of posets
	\eqn\label{eqn. combinatorial sharp}
	\#_i: A_2 \coprod A_1 \to A_2 \circ_i A_1.
	\eqnd
By taking the geometric realization, we obtain a continuous map of topological spaces
	\eqnn
	|\Delta^{A_2}| \coprod |\Delta^{A_1}| \to |\Delta^{A_2 \circ_i A_1}|
	\eqnd
which, by (canonically) identifying $A_2 \cong [d_2]$ and $A_1 \cong [d_1]$ as linear posets, is equivalent to a continuous map
	\eqn\label{eqn. sharp simplices}
	\#_i: |\Delta^{d_2}| \coprod |\Delta^{d_1}| \to |\Delta^{d_2 + d_1 - 1}|.
	\eqnd
Concretely, $\#_i$ simplicially includes $|\Delta^{d_2}|$ and $|\Delta^{d_1}|$ as subsimplices of $|\Delta^{d_2 + d_1-1}|$, and these inclusions overlap along the edge between the $i$th and $(d_2 + i)$th vertices of $|\Delta^{d_2 + d_1 - 1}|$.

\begin{remark}
\eqref{eqn. sharp simplices} should be compared with the map~\eqref{eqn. sharp 0} from Section~\ref{section. curves operad}.
\end{remark}

\subsection{Operadic compatibility}

We start with fixing the following notation to avoid confusion arising from two
integer-valued indices that will be used.

\begin{notation}[The indices $n$ and $d$]
Fix a Liouville bundle  $E\to B$. We eventually want to define an $A_\infty$-category $\cO_j$ associated to any smooth map $j: |\Delta^n| \to  B $; in particular, we must define the $A_\infty$-operations $\mu^d$ for the $A_\infty$-categories $\cO_j$. In this section, the integers $n$ and $d$ will be used precisely for these purposes.
\end{notation}

Suppose we are given a simplicial map $\beta: |\Delta^d| \to |\Delta^n|$. (This is induced by a function $[d] \to [n]$, but this function need not be order-preserving.)\footnote{
Savelyev uses the notation $u(m_1,\ldots,m_d,n)$, but the data of $m_1,\ldots,m_d,n$ is equivalent to the data of a single simplicial map $\beta: |\Delta^d| \to |\Delta^n|$, or equivalently, a map of sets $\beta: [d] \to [n]$.
} We seek smooth maps
	\eqnn
	\nu_{\beta}: \overline{\cS}_{d+1}^\circ \to |\Delta^n|
	\eqnd
satisfying the following properties. (See the first row of Figure~\ref{figure. u_beta}.)

\newenvironment{natural-system}{
	  \renewcommand*{\theenumi}{(NS\arabic{enumi})}
	  \renewcommand*{\labelenumi}{(NS\arabic{enumi})}
	  \enumerate
	}{
	  \endenumerate
}

\begin{natural-system}
	\item\label{NS. strip condition u} Fix any $0 \leq k \leq d$. For any $r \in \overline{\cR}_{d+1}$, consider the fiber $\cS_r \subset \overline{\cS}_{d+1}^\circ$. Then for the edge from $k-1$ to $k$ in $|\Delta^d|$, the diagram
		\eqnn
		\xymatrix{
		[0,\infty) \times |\Delta^1| \ar[dr] \ar[r]^-{\epsilon_k}
			& \cS_r \subset \overline{\cS}_{d+1}^\circ \ar[r]^-{\nu_{\beta}}
			& |\Delta^n| \\
			& |\Delta^1| \ar[ur]_{\beta_{k-1,k}}
		}
		\eqnd
	commutes. (Here, $\beta_{k-1,k}$ is the simplicial inclusion of the edge from the $(k-1)$st vertex to the $k$th.) In plain English, this means that $\nu_{\beta}$ is compatible with the strip-like end parametrization by $|\Delta^1|$ near the $k$th puncture of $\cS_r$.
	
	Note we are using Choice~\ref{choice:strip like ends}; also, when $k=0$, the domain of $\epsilon_k$ should be $(-\infty,0] \times |\Delta^1|$ as opposed to $[0,\infty) \times |\Delta^1|$.
	\item\label{NS. vertex condition u} Now consider the boundary of $\cS_r$, and remove the images of the strip-like ends from this boundary. This results in $d+1$ disconnected open intervals, and we enumerate them so that the $(k-1)$st interval is contained in the boundary arc of $\cS_r$ beginning at the $(k-1)$st marked point and ending at the $k$th marked point.
	
	We demand that $\nu_{\beta}$ sends all of the $k$th interval to the vertex $\beta(k)$.
\end{natural-system}

    \begin{figure}
    \eqnn
    			\xy
    			\xyimport(8,8)(0,0){\includegraphics[width=3in]{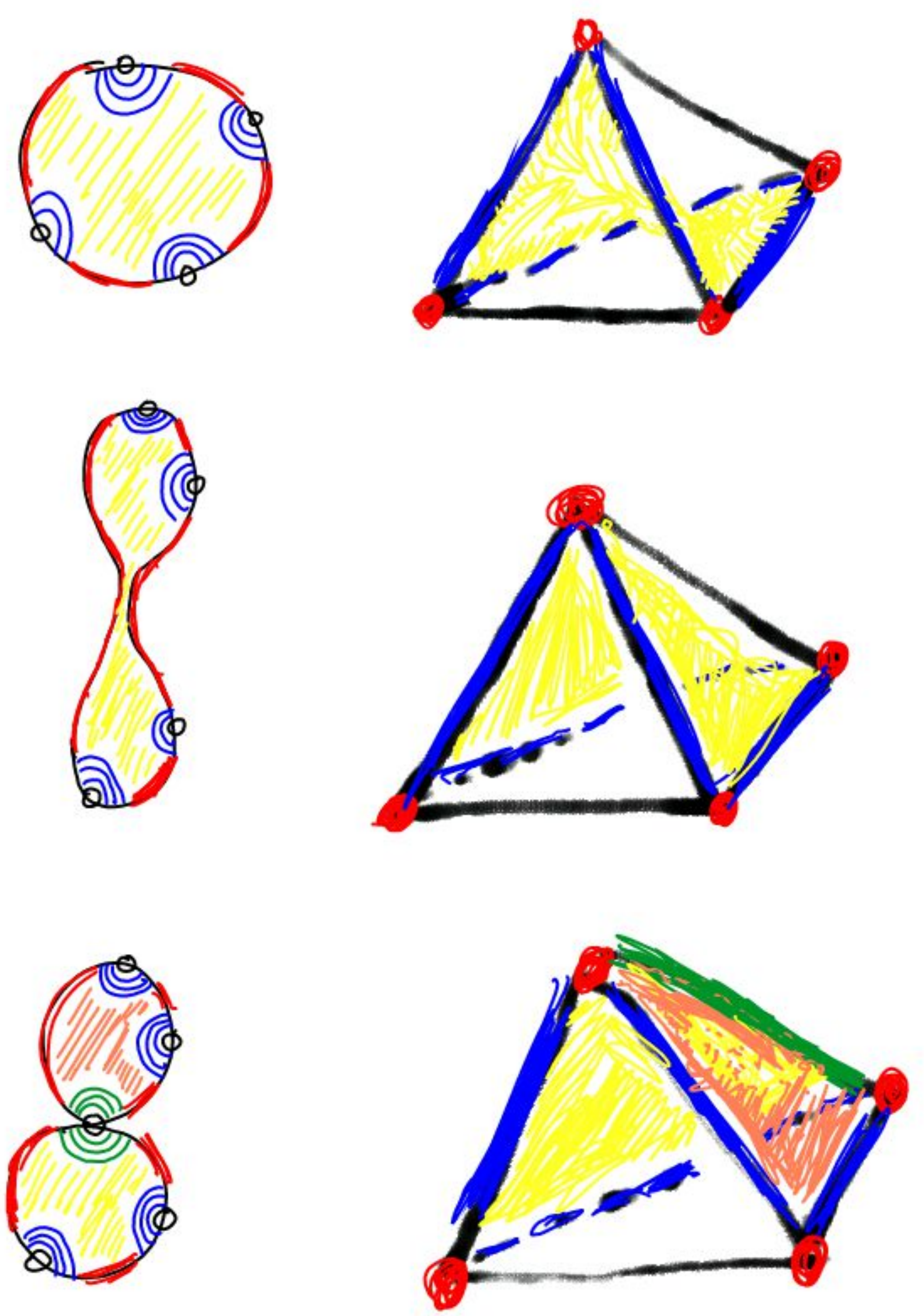}}
					,(1,6.2)*{L_0}
					,(0,7.2)*{L_1}
					,(2,7.6)*{L_2}
					,(2.5,6.5)*{L_3}
					,(3.5,6)*{0}
					,(5.2,8)*{1}
					,(6.5,6)*{2}
					,(7.4,7)*{3}
					,(1.5,3.1)*{L_0}
					,(0.8,4.3)*{L_1}
					,(1.7,5.5)*{L_2}
					,(1.5,4.5)*{L_3}
					,(3.1,3)*{0}
					,(4.7,5)*{1}
					,(6.5,3)*{2}
					,(7.5,4)*{3}
					,(1,0.15)*{L_0}
					,(-0.1,0.7)*{L_1}
					,(0.2,1.8)*{L_1}
					,(1.6,2)*{L_2}
					,(1.3,1.2)*{L_3}
					,(3.5,0.2)*{0}
					,(4.9,2.3)*{1}
					,(7.3,0.3)*{2}
					,(8,1.4)*{3}
					,(0.8,-0.2)*{\cS_r}
					,(5.5,-0.3)*{|\Delta^3|}
    			\endxy
    \eqnd
    \begin{figurelabel}
    \label{figure. u_beta}
    An image of $\nu_{\beta}$, restricted to $\cS_r$ for various $r \in \cR_{3+1}$, when $\beta$ is the identity $\beta: |\Delta^3| \to |\Delta^3|$. The blue arcs on the left-hand disk are images of the 1-simplices $\{t\} \times |\Delta^1| \subset (0,\infty) \times |\Delta^1|$ under strip-like parametrizations; they are sent to the blue edges indicated on the right-hand 3-simplices. The red thick edges on the disks are the ``open intervals'' referred to in the main text (in practice, these thick edges are labeled by Lagrangians $L_i$); these red edges are collapsed to the vertices labeled in red on the 3-simplices. (The edge labeled by $L_i$ is sent to the $i$th vertex.) In yellow is a drawing of the image of $\nu_{\beta}$ in the 3-simplex. In the bottom most image, the two components of a nodal disk are labeled by orange and yellow, and these are sent to two faces of the 3-simplex as indicated. Note the new green strip-like ends, and the newly highlighted green edge of the 3-simplex.
    \end{figurelabel}
    \end{figure}

At this point, we see that as $r$ approaches a boundary stratum of $\overline{\cR}_{d+1}$ (i.e., as disks degenerate), we would like the restrictions $\nu_{\beta}|_{\cS_r}$ to behave in a way compatible with the boundary faces of the $n$-simplex. See Figure~\ref{figure. u_beta}. We make this compatibility (which Savelyev refers to as ``natural'' in~\cite{savelyev}) precise:

\begin{notation}[$\star_i$]
Fix $d_2, d_1 \geq 2$ such that $d = d_2 + d_1 - 1$. Choose $1 \leq i \leq d_1$.

Then our fixed map $\beta: |\Delta^d| \to |\Delta^n|$ induces maps
	$\beta_1: |\Delta^{d_1}| \to |\Delta^n|$
	and
	$\beta_2: |\Delta^{d_2}| \to |\Delta^n|$
so that the diagram
	\eqnn
	\xymatrix{
	 | \Delta^{d_2}| \coprod |\Delta^{d_1}| \ar[dr]_{\beta_2 \coprod \beta_1} \ar[rr]^-{\#_i}
	 	&& |\Delta^{d}| \ar[dl]^{\beta} \\
		& |\Delta^n|
	}
	\eqnd
commutes. (Here, $\#_i$ is the map from~\eqref{eqn. sharp simplices}.)

On the other hand, if we are given maps $\nu_{\beta_1}: \overline{\cS}_{d_1 + 1}^\circ \to |\Delta^n|$ and  $\nu_{\beta_2}: \overline{\cS}_{d_2 + 1}^\circ \to |\Delta^n|$, the conditions \ref{NS. strip condition u} and  \ref{NS. vertex condition u} guarantee the existence of a (unique) map making the following diagram commute:
	\eqnn
		\xymatrix{
		\overline{\cS}_{d_2 + 1}^\circ \coprod \overline{\cS}_{d_1 +1}^\circ \ar[rr]^-{\#_{i,\tau=0}}
			\ar[dr]_{\nu_{\beta_2} \coprod \nu_{\beta_1}}
			&& \overline{\cS}_d^\circ |_{\overline{\cR}_{d_2+1} \circ_i \overline{\cR}_{d_1+1}}
				\ar[dl]^{\exists!} \\
		& |\Delta^{n}|
		}
	\eqnd
Here, the notation $\overline{\cS}_{d+1}^\circ |_{\overline{\cR}_{d_2+1} \circ_i \overline{\cR}_{d_1 +1}^\circ}$ denotes the family $\overline{\cS}_{d+1}^\circ$ restricted to the image of the map $\circ_i$ from~\eqref{eqn. R-gluing}.

Extending the gluing parameter $\tau$ from 0 to an element of $[0,\epsilon)$, there is a  neighborhood $U_{d_2,d_1,i} \supset \overline{\cR}_{d_2+1} \circ_i \overline{\cR}_{d_1+1}$ such that there is a unique extension $\nu_{\beta_2} \star_i \nu_{\beta_1}$ making the diagram below commute:
	\eqnn
		\xymatrix{
		\overline{\cS}_{d_2 + 1}^\circ \coprod \overline{\cS}_{d_1 +1}^\circ \times [0,\epsilon) \ar[rr]^-{\#_i}
			\ar[dr]_{\nu_{\beta_2} \coprod \nu_{\beta_1}}
			&& \overline{\cS}_d^\circ |_{U_{d_2,d_1,i}}
				\ar[dl]^{\nu_{\beta_2} \star_i \nu_{\beta_1}}  \\
		& |\Delta^{n}|.
		}
	\eqnd
Explicitly, on the thin strips $|\Delta^1| \times [-\tau,\tau]$, we declare $\nu_{\beta_2} \star_i \nu_{\beta_1}$ to equal the composition of the projection to $|\Delta^1|$ with the simplicial inclusion of the edge from the $(i-1)$st vertex to the $i$th vertex.
\end{notation}

Hence it is natural to demand the following:

	\begin{natural-system}
	\setcounter{enumi}{2}
	\item\label{NS. star compatible} For all $d_1, d_2, i$ as above, we demand that $\nu_{\beta}$ agrees with $\nu_{\beta_2} \star_i \nu_{\beta_1}$ on some neighborhood of $\overline{\cR}_{d_2+1} \circ_i \overline{\cR}_{d_1+1}$. That is,
		\eqnn
		\nu_{\beta} = \nu_{\beta_2} \star_i \nu_{\beta_1}
		\qquad
		\text{on}
		\qquad
		\overline{\cS}_{d+1}^\circ|_{U_{d_2,d_1,i}}
		\eqnd
	(possibly after replacing $U_{d_2,d_1,i} \supset \overline{\cR}_{d_2+1} \circ_i \overline{\cR}_{d_1+1}$ with some other neighborhood containing $\overline{\cR}_{d_2+1} \circ_i \overline{\cR}_{d_1+1}$, if necessary).
	\end{natural-system}
Finally, while we have fixed $n$ up until now, we demand that $\nu_{\beta}$ is functorial as the codomain of $\beta$ varies:
	\begin{natural-system}
	\setcounter{enumi}{3}
	\item\label{NS. j functorial} Let $\alpha: [n] \to [n']$ be a map of posets; by abuse of notation, we also denote the induced simplicial map $\alpha: |\Delta^n| \to |\Delta^{n'}|$. Then for any $\beta: |\Delta^d| \to |\Delta^n|$, we demand
		\eqnn
		\alpha \circ \nu_{\beta} = \nu_{\alpha \circ \beta}.
		\eqnd
	\end{natural-system}
	
\begin{defn}[Natural system] \label{defn. natural system}
For every $n,d \geq 0$ and every simplicial map $\beta: |\Delta^d| \to |\Delta^n|$, choose a smooth map
	\eqnn
	\nu_{\beta}: \overline{\cS}_{d+1}^\circ \to |\Delta^n|.
	\eqnd
The collection $\{\nu_{\beta}\}$ is called a {\em natural system} if \ref{NS. strip condition u}, \ref{NS. vertex condition u}, \ref{NS. star compatible}, and \ref{NS. j functorial} above are satisfied.
\end{defn}

A standard inductive argument shows the following:

\begin{prop}[Proposition~3.4 of~\cite{savelyev}]\label{prop:natural systems exist}
Natural systems exist.
\end{prop}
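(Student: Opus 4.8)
The plan is to construct the maps $\nu_\beta$ by induction on $d$, treating the codomain index $n$ and the simplicial map $\beta \colon |\Delta^d| \to |\Delta^n|$ as carried along at each stage, so that property~\ref{NS. j functorial} is built in by fiat. For the base cases $d = 0$ and $d = 1$, the spaces $\overline{\cS}_{d+1}^\circ$ have no nodal degenerations: for $d=0$ the fiber is a disk with one puncture, and the only constraint is~\ref{NS. vertex condition u} (plus the half-strip-like-end condition from~\ref{NS. strip condition u}), which forces $\nu_\beta$ to be the constant map at the vertex $\beta(0)$; for $d=1$ the fiber $\cS_r$ is a disk with two boundary punctures, biholomorphic to the strip $\RR \times |\Delta^1|$, and~\ref{NS. strip condition u} together with~\ref{NS. vertex condition u} pins down $\nu_\beta$ to be the projection $\RR \times |\Delta^1| \to |\Delta^1|$ postcomposed with the edge inclusion $\beta_{0,1} \colon |\Delta^1| \to |\Delta^n|$. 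Functoriality~\ref{NS. j functorial} is immediate in both cases since the formula only depends on $\beta$ through the images of vertices.

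For the inductive step, suppose $\nu_{\beta'}$ has been chosen for all simplicial maps $\beta'$ out of simplices of dimension $< d$, compatibly with~\ref{NS. strip condition u}--\ref{NS. j functorial}. Fix $\beta \colon |\Delta^d| \to |\Delta^n|$. On the boundary $\partial \overline{\cR}_{d+1}$, every codimension-one stratum is a product $\overline{\cR}_{d_2+1} \times \overline{\cR}_{d_1+1}$ with $d_2 + d_1 - 1 = d$ and $d_1, d_2 < d$ (or one of them equals $1$), and the gluing construction $\star_i$ applied to the already-chosen $\nu_{\beta_2}$ and $\nu_{\beta_1}$ produces a map on a neighborhood $\overline{\cS}_{d+1}^\circ|_{U_{d_2,d_1,i}}$ of that stratum. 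First I would check that these locally-defined maps agree on overlaps of the neighborhoods $U_{d_2,d_1,i}$ — this is where one uses the associativity/coherence of the operad structure on the $\overline{\cR}$'s together with the inductive hypothesis that the lower $\nu$'s already satisfy~\ref{NS. star compatible}, so that near a codimension-two corner both descriptions reduce to the same triple gluing. Having a well-defined map on a neighborhood $\cN$ of all of $\partial \overline{\cR}_{d+1}$ (shrinking the $U$'s as needed), together with the prescribed behavior on strip-like ends and boundary intervals from~\ref{NS. strip condition u} and~\ref{NS. vertex condition u} on the interior, one then extends this map over the interior of $\overline{\cR}_{d+1}$. Because the target $|\Delta^n|$ is convex (indeed a convex subset of $\RR^{n+1}$), any partially-defined smooth map on the boundary-collar-plus-strip-data can be extended using a partition of unity and straight-line interpolation into $|\Delta^n|$; the convexity is precisely what makes the extension problem unobstructed. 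One must take a little care that the extension still lands in $|\Delta^n|$ rather than its affine span, but since $|\Delta^n|$ is convex and all the prescribed boundary data already land in $|\Delta^n|$, convex combinations stay inside.

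Finally, one arranges~\ref{NS. j functorial} at the inductive stage by choosing, for each $d$, the maps $\nu_\beta$ only for $\beta$ of the form $\mathrm{id}\colon |\Delta^d| \to |\Delta^d|$ (and for the various "surjective-on-vertices" representatives), and then \emph{defining} $\nu_\beta := \alpha \circ \nu_{\beta_0}$ whenever $\beta = \alpha \circ \beta_0$ factors through a smaller "essential" target; one checks this is well-defined and consistent with the $\star_i$-gluing because $\alpha$ is an affine map and hence commutes with the straight-line interpolation and with the edge-projection formula on thin strips used to define $\star_i$. I expect the main obstacle to be the overlap-compatibility check in the inductive step: verifying that the maps $\nu_{\beta_2}\star_i\nu_{\beta_1}$ coming from different codimension-one strata agree on the pairwise intersections of their neighborhoods, which amounts to unwinding the combinatorics of how two nested/disjoint gluings $\circ_i, \circ_j$ interact near a codimension-two stratum of $\overline{\cR}_{d+1}$ and matching it against the corresponding iterated composition of posets $A_3 \circ A_2 \circ A_1$. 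Everything else — the base cases, the convex extension, and the functoriality bookkeeping — is routine once this coherence is in hand; this is exactly the "standard inductive argument" alluded to, and the detailed version is Savelyev's Proposition~3.4 in~\cite{savelyev}.
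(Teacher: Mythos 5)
Your proposal is correct and is precisely the ``standard inductive argument'' the paper invokes: the paper itself gives no proof beyond citing Savelyev, and your sketch --- base cases $d\le 1$ forced by \ref{NS. strip condition u}--\ref{NS. vertex condition u}, inductive extension from the $\star_i$-glued boundary data using operadic coherence at codimension-two corners, unobstructed interior extension via convexity of $|\Delta^n|$, and \ref{NS. j functorial} by postcomposition --- is exactly how that argument goes. The only streamlining worth noting is that for \ref{NS. j functorial} it suffices to choose $\nu_{\id}\colon \overline{\cS}^\circ_{d+1}\to|\Delta^d|$ alone and set $\nu_\beta := |\beta|\circ\nu_{\id}$ for the affine map induced by any function $[d]\to[n]$, which makes the well-definedness check you mention vacuous.
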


\begin{choice}\label{choice. natural system}
We will choose a natural system $\{\nu_{\beta}\}$ once and for all. (Note this is independent of any symplectic geometry or of any choice of Liouville bundle $E \to B$.)
\end{choice}

\begin{remark}\label{remark:simplices are homeo to families of disks}
One can prove that, given a natural system, each of the maps $\nu_{\beta}: \overline{\cS}_{n+1}^\circ \to |\Delta^n|$ (when $\beta = \id$) is a degree one map on the interior; one can roughly think of $\nu_{\beta}$, then, as ``homeomorphisms on the interior.'' The naturality of the system says that these topological equivalences can be chosen in such a way that the gluing operations of disks are compatible with the insertion operation of simplices.
\end{remark}

\subsection{Collars on boundaries of simplices}
We conclude this section with a final choice, made once and for all for every standard simplex $|\Delta^d| \subset \RR^{d+1}$.

\begin{choice}[Collars of simplices]\label{choice. simplicial collars}
For every closed, codimension one face $F \subset |\Delta^d|$, we choose a small open neighborhood $U_F \subset |\Delta^d|$ together with a smooth retraction $\pi_F: U_F \to F$, which one thinks of as a projection map.

We choose the data of $(U_F, \pi_F)$ such that the following holds:

\enum
    \item (The neighborhoods are mutually small.)
    If $A \subset |\Delta^d|$ is a closed subsimplex, let
    	\eqnn
    	\cU_A := \bigcap_{A \subset F} U_F.
    	\eqnd
    If $A, A' \subset |\Delta^d|$ are two closed subsimplices such that $A \cap A' = \emptyset$, we demand that
    	\eqn\label{eqn. collar intersections are empty}
    	\cU_A \cap \cU_{A'} = \emptyset.
    	\eqnd
	\item (Neighborhoods have controlled intersections.)
	Let $F$ and $F'$ be two codimension one faces with intersection $G = F \cap F'$. Then
		\eqn\label{eqn. collars controlled}
		U_F \cap F' \subset U_G.
		\eqnd
\enumd
\end{choice}

\begin{remark}
Let us motivate the conditions. First, the collars are chosen so that we may trivialize certain data over the collars. (See for example \ref{item. floer data smooth} of Choice~\ref{choice. floer data}.)

The intersection property~\eqref{eqn. collar intersections are empty} guarantees that these local trivializations do not enforce a global trivialization.

The intersection constraint~\eqref{eqn. collars controlled} enables choices that are made inductively by dimension. For example, if we have already chosen data on lower-dimensional face $F'$ for which a trivialization does not extend beyond $U_G$, $U_F$ must be sufficiently small for us to be able to trivialize on $U_F$.
\end{remark}

\begin{remark}
Note that these collars are independent of our choice of natural systems. The collar choices are likewise independent of any symplectic geometry. They obviously exist by a simple inductive argument on dimension.
\end{remark}

\clearpage

\section{Compactness}\label{section. compactness  estimates}

In this section, we study compactness properties of the moduli space
of holomorphic sections of a Liouville bundle $\pi:P \to \Sigma$ over
a surface $\Sigma = D^2 \setminus \{z_0,\ldots, z_k\}$. There are two fundamental
analytical ingredients to establish: $C^0$-estimates and
 energy estimates. These estimates imply Gromov compactness as usual, and establish both the $A_\infty$ relations for $\cO_j$, and the existence and properties of continuation maps (which we study in the next section).

\begin{remark}
We reiterate that we are using {\em cohomological} conventions for our Floer complexes.
So for example, given a morphism $q$ from a brane $L$ to another brane $L'$,
the differential $\mu^1 q$ is computed by counting solutions to the equation
	\eqn\label{eq:CR-boundary}
\begin{cases}
\delbar_J u = 0 \\
u(-\infty) = p, \, u(\infty) = q \\
u(\tau,0) \in L, \, u(\tau,1) \in L'.
\end{cases}
	\eqnd
This is the same convention as in \cite{seidel-long-exact-sequence}.
\end{remark}

\subsection{Curvature}

\begin{notation}[$\Omega$]
Fix a Liouville bundle $E \to B$ with base $B$, and fix a Liouville form $\Theta \in \Omega^1(E)$ as in Remark~\ref{remark. can choose liouville form}; we have the associated two-form
	\eqnn
	\Omega=d\Theta
	\eqnd
on $E$.
\end{notation}

Suppose that $B = \Sigma$ is a Riemann surface. Then the curvature of the connection associated to $\Theta$ (Definition \ref{defn. theta connection}) can be regarded as a 2-form on $\Sigma$ with values in smooth functions of the fibers. More precisely:

\begin{prop}\label{prop. curvature controlled by f}
For any orientation-respecting volume form $\omega_\Sigma$ on $\Sigma$, we have
	\eqn\label{eq:Omega-f-beta}
	\Omega|_{HTE} = f \pi^* \omega_{\Sigma}|_{HTE}
	\eqnd
where $f: E \to \RR$ is a smooth function.
\end{prop}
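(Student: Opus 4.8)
The plan is to exploit the defining property of the connection $HTE$ associated to $\Theta$ (Definition~\ref{defn. theta connection}), namely that a tangent vector $x$ is horizontal precisely when $d\Theta(v,x) = 0$ for every vertical vector $v$. Since $\Sigma$ is two-dimensional, the vertical tangent bundle $VTE$ has a complement $HTE$ of rank two, so $\Omega|_{HTE}$ is a 2-form on a rank-two bundle; it is therefore automatically proportional, at each point, to any fixed nonvanishing 2-form on $HTE$, such as $\pi^*\omega_\Sigma|_{HTE}$ (which is nonvanishing because $\omega_\Sigma$ is a volume form and $d\pi$ restricts to an isomorphism $HTE \xrightarrow{\sim} \pi^*T\Sigma$). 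The only content is then that the proportionality factor is a \emph{smooth} function $f: E \to \RR$, which is immediate once one writes things in a local frame.

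Concretely, I would argue as follows. Fix $p \in E$ with $b = \pi(p)$, and choose a local oriented coordinate on $\Sigma$ near $b$, pulled back to give a local horizontal frame: there is a unique horizontal lift of the coordinate vector fields, giving local sections $X_1, X_2$ of $HTE$ with $\pi^*\omega_\Sigma(X_1,X_2) = g$ for some positive smooth $g$. Then $\Omega|_{HTE}$ is determined at each point by the single number $\Omega(X_1, X_2)$, and we may set
    \eqn\label{eqn. f definition}
    f := \frac{\Omega(X_1, X_2)}{\pi^*\omega_\Sigma(X_1,X_2)} = \frac{d\Theta(X_1,X_2)}{g}.
    \eqnd
This is visibly smooth on the coordinate patch, being a quotient of smooth functions with nonvanishing denominator, and on overlaps of patches the two defining expressions agree (both equal the pointwise ratio $\Omega|_{HTE} / \pi^*\omega_\Sigma|_{HTE}$, which is coordinate-independent since both 2-forms live on the rank-two bundle $HTE$), so the locally defined functions glue to a global smooth $f: E \to \RR$ satisfying~\eqref{eq:Omega-f-beta}. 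The identity~\eqref{eq:Omega-f-beta} then holds because both sides, restricted to $HTE$, are 2-forms on a rank-two bundle agreeing on the frame $(X_1,X_2)$.

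I do not expect any serious obstacle here; the statement is essentially a dimension count ($\dim \Sigma = 2$) combined with the fact that $\pi^*\omega_\Sigma|_{HTE}$ is nowhere zero. The one point deserving a sentence of care is the well-definedness of $f$ across coordinate charts and across fibers — i.e., that the pointwise ratio of the two 2-forms on $HTE$ is independent of the chosen horizontal frame — but this is the standard observation that the space of alternating 2-forms on a 2-dimensional vector space is 1-dimensional, so any two elements are related by a unique scalar, and that scalar varies smoothly because both 2-forms do. A remark worth including is that this $f$ is exactly the ``curvature function'' whose pinchedness from below is invoked later (cf. the discussion after Theorem~\ref{thm:hcL=mu2ccL}); one could also note, if desired, that $\iota_{X} \Omega$ for $X$ horizontal is, up to the factor $f\,\omega_\Sigma$, the obstruction to $X$ being a symplectic vector field on the fibers, which is the conceptual reason the curvature of a Liouville connection is measured by a function rather than a more complicated object.
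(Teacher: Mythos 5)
Your argument is correct and is exactly the standard one: since $d\pi$ identifies the rank-two bundle $HTE$ with $\pi^*T\Sigma$, and the space of alternating 2-forms on a two-dimensional vector space is one-dimensional with $\pi^*\omega_\Sigma|_{HTE}$ nowhere vanishing, the pointwise ratio $f$ exists uniquely and is smooth by the local-frame computation you give. The paper itself supplies no proof, deferring to Seidel's long-exact-sequence paper and to Oh's book, where the same dimension-count argument appears, so there is nothing further to check.
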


For proofs, see Theorem 4.2.9 of \cite{oh:book1} or (1.4) of~\cite{seidel-long-exact-sequence}.

\begin{defn}[Compare with p.1007 \cite{seidel-long-exact-sequence}]\label{defn:+curvature}
We say that the curvature of $\Theta$ is {\em non-negative} if $\Omega|_{HTE}$ is non-negative for the orientation on $HTE$ (induced by that of $\Sigma$).

More generally, we say $\Theta$ is \emph{$(-C)$-pinched (from below)} if we have that
	$$
\inf_{x \in E} f(x) \geq - C
	$$
for some $C \geq 0$.
(Here, the function $f$ is as in \eqref{eq:Omega-f-beta}.)
\end{defn}

\begin{example}
$\Theta$ has nonnegative
curvature if and only if it is $0$-pinched.
\end{example}

\begin{remark}
In all our choices of $\Theta$, we can arrange for $\Theta$ to be $(-C)$-pinched. This is for two reasons: First, $\Theta$ has behavior controlled outside a set $K \subset E$ that is proper over $\Sigma$ (by assumption, $K$ is a set whose complement is a fiberwise cylindrical region). Second, on a strip-like end of $\Sigma$, $\Theta$ is trivialized in the $\tau$ direction (e.g., in the $(-\infty,0]$ direction of the strip $(\infty,0] \times [0,1]$)), so one may extend $K$ over a compactification of $\Sigma$ (e.g., by filling in the punctures of $\Sigma$ with chords).
\end{remark}

\subsection{On the interior of disks}

We now establish that holomorphic curves with certain Lagrangian boundary conditions must be contained in some compact region of a Liouville bundle. A key result is Proposition~\ref{prop:Deltarv>0}, which shows that---if a holomorphic curve $u: \Sigma \to E$ intersects the cylindrical region of $E$---the natural conical coordinate $r= e^s$ is a subharmonic function on $\Sigma$.

\begin{remark}
Recall that subharmonic functions behave like ``convex up'' functions, in that non-constant maxima are attained only along the boundary of the domain. Thus, knowing that the cylindrical coordinate of $u$ is constrained along the interior of $\Sigma$, by imposing appropriate boundary conditions on our Lagrangian family (to obtain constraints on the behavior of $u$ along the boundary of $\Sigma$), we obtain the desired $C^0$ estimate in Theorem~\ref{thm:C0estimate+CL}.
\end{remark}

\begin{lemma}\label{lem:Deltarv=vOmega}
Fix an conical-near-infinity conical almost-complex structure on $E \to B$ (Definition~\ref{defn. J conical near infinity}). Let $v: \Sigma \to E$ be any $(j,J)$-holomorphic section. Then there exists some subset $K \subset E$, proper over $\Sigma$, outside of which we have
	\eqn\label{eq:Deltarv=vOmega}
\Delta (r\circ v) \omega_\Sigma = v^*\Omega.
	\eqnd
\end{lemma}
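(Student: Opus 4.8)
The plan is to compute, in local coordinates on $\Sigma$ and in a fiberwise cylindrical chart where $\Theta|_{E_b} = e^s\pi^*\alpha$, the quantity $\Delta(r\circ v)$ directly from the holomorphic section equation, and to identify the result with $v^*\Omega$ outside the set $K$ where the conical structure holds.

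First I would fix the set $K\subset E$ supplied by Definition~\ref{defn. J conical near infinity}: outside $K$ each fiber $E_b$ is a symplectization piece $\RR_{s\geq 0}\times Y_b$ with $\Theta|_{E_b} = e^s\pi^*\alpha$ and $\Theta|_{E_b}\circ\JJ|_{E_b} = d(e^s) = dr$. Pulling $K$ back along $v$, I work over the region $\Sigma\setminus v^{-1}(K)$, so that $v$ lands in the cylindrical locus and $r\circ v = e^{s\circ v}$ makes sense as a function on that region. Choose an isothermal coordinate $z = \tau + it$ on $\Sigma$ so that $\omega_\Sigma$ is a positive multiple of $d\tau\wedge dt$ and $\Delta = \partial_\tau^2 + \partial_\tau^2$ up to that conformal factor; since the claimed identity \eqref{eq:Deltarv=vOmega} is an equality of $2$-forms, the conformal factor cancels and it suffices to prove $(\partial_\tau^2 + \partial_t^2)(r\circ v)\, d\tau\wedge dt = v^*\Omega$.

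Next, the computation proper. Write $g = r\circ v = e^{s}\circ v$. On the cylindrical region $dr = \Theta\circ\JJ$ holds fiberwise, but I must be careful because $v$ is a section of a bundle and $\JJ$ is only an almost-complex structure on $\pi^*\cH\oplus VTE$; however, since $\Sigma$ is the base here and $v$ is $(j,J)$-holomorphic with $d\pi\circ dv = \id$, the relevant computation reduces to the fiber direction, where the standard symplectization identity applies. The key algebraic manipulation is the classical one (as in Theorem 4.2.9 of~\cite{oh:book1} or (1.4) of~\cite{seidel-long-exact-sequence}): using $\delbar_J v = 0$ one rewrites $d(dg\circ j) = d(v^*\Theta)$ up to terms supported where the conical structure fails, and then $d(v^*\Theta) = v^*d\Theta = v^*\Omega$. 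Concretely, $dg = d(r\circ v) = v^*(dr) = v^*(\Theta\circ\JJ)$ on $\Sigma\setminus v^{-1}(K)$; composing with the complex structure $j$ on $\Sigma$ and using holomorphicity of $v$ turns $v^*(\Theta\circ\JJ)\circ j$ into $v^*\Theta$ up to sign; taking $d$ of both sides and recalling $\Delta g\, \omega_\Sigma = d(dg\circ j)$ (the standard expression of the Laplacian via the Hodge star on a Riemann surface) yields $\Delta(r\circ v)\,\omega_\Sigma = v^*\Omega$ on the region in question, which is exactly \eqref{eq:Deltarv=vOmega}.

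The main obstacle I anticipate is bookkeeping the bundle/horizontal-vertical decomposition cleanly: unlike the single-fiber case, $\Omega = d\Theta$ has both a vertical part (the fiberwise symplectic forms) and a mixed/horizontal part (encoding the curvature, controlled by $f$ via Proposition~\ref{prop. curvature controlled by f}), and one must check that the identity $dg = v^*(\Theta\circ\JJ)$ holds with $\JJ$ the ambient structure and not just the vertical one. The resolution is that condition~\ref{item. suitable. holomorphic projection}-type compatibility forces $d\pi\circ\JJ = j\circ d\pi$, so for a section $v$ the horizontal component of $dv$ is simply the identity tensored against $j$, and the cylindrical normalization $\Theta\circ\JJ = dr$ in Definition~\ref{defn. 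J conical near infinity} is stated for the full $\Theta|_{E_b}$; thus no genuinely new input beyond the single-manifold computation is needed, only care that all the pullbacks are along the section $v$ and that every term living over $K$ is quarantined into the error region. Everything else is the routine Riemann-surface identity $\Delta u\,\omega_\Sigma = -d\,{\star}\,du$ combined with $\delbar_J v = 0$.
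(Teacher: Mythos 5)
Your proposal is correct and follows essentially the same route as the paper's proof: fix $K$ from Definition~\ref{defn. J conical near infinity}, compute $d(r\circ v) = v^*(\Theta\circ J) = v^*\Theta\circ j$ using holomorphicity, and then apply $\Delta(r\circ v)\,\omega_\Sigma = -d\bigl(d(r\circ v)\circ j\bigr) = d(v^*\Theta) = v^*\Omega$. The only thing to tighten is the sign bookkeeping (your ``up to sign'' in composing with $j$ and your stated Laplacian identity differ from the paper's conventions by compensating signs), but the argument is the same.
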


\begin{proof}
We take $K$ to be the same set as in the definition of conical-near-infinity almost-complex structure (Definition~\ref{defn. J conical near infinity}). We may then choose a global $r$ coordinate.\footnote{For example, by constructing an appropriate bundle of Liouville domains $M_\Sigma \to \Sigma$, embedding $M_\Sigma \into E$ over $\Sigma$, then defining $r$ by the Liouville flow time of the fiberwise contact boundaries.}
Using $\Theta \circ J = dr$ and $J\circ dv = dv \circ j$, we have
	$$
d(r\circ v) = v^*dr = v^*(\Theta \circ J) = \Theta (J\circ dv)
= \Theta (dv \circ j) = v^*\Theta \circ j.
	$$
Therefore we have
	$$
\Delta (r\circ v)\omega_\Sigma = - d(d(r\circ v)\circ j) = d(v^*\Theta) = v^*\Omega.
	$$
This finishes the proof.
\end{proof}

\begin{notation}[$l$ and $\omega_\beta$]\label{notation. ell omega beta}
Now let $\beta$ be a positive 2-form on $\Sigma$---this means that
	\eqnn
	\beta = l \omega_\Sigma
	\eqnd
for some positive function $l: \Sigma \to \RR$. We let
	\eqnn
	\Omega_\beta = \Omega + \pi^*\beta.
	\eqnd
\end{notation}

\begin{remark}
If $\ell: \Sigma \to \RR$ is sufficiently positive, then $\Omega_\beta$ is a symplectic form. Moreover, given a conical-near-infinity choice of almost complex structure $J$, choosing $\ell$ sufficiently positive makes the projection $E \to B$ a $(J,j)$-holomorphic map (Lemma~2.1 of~\cite{seidel-long-exact-sequence}).
\end{remark}

Consider the compatible metric
	$$
g(X,Y) := \frac12(\Omega_\beta(X, JY) + \Omega_\beta(Y, JX).
	$$
As usual, we have the identity
	$$
|dv|^2 = |\del_J v|^2 + |\delbar_J v|^2,  \quad 2 v^*\Omega_\beta = (|\del_J v|^2 - |\delbar_J v|^2)\, \omega_\Sigma
	$$
for \emph{any smooth} section $v$.  We have more when $v$ is holomorphic:

\begin{prop}\label{prop:Deltarv>0} Suppose $v:\Sigma \to E$ is a $(j,J)$-holomorphic section. Then
	$$
\Delta(r \circ v) = \frac12|(dv)^{\text{\rm v}}|^2 + f(v) \ell
	$$
where $f$ is the same function from~\eqref{eq:Omega-f-beta} and $\ell$ is from Notation~\ref{notation. ell omega beta}.

In particular, if the pull-back connection of $v^*E$ has nonnegative curvature, i.e., if $f(v) \geq 0$, and if $\ell$ is large enough,
then $r \circ v$ is a subharmonic function on $v^{-1}(E \setminus K)$. (Here, $K$ is the same set as in Lemma~\ref{lem:Deltarv=vOmega}.)
\end{prop}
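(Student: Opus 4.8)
The idea is to combine Lemma~\ref{lem:Deltarv=vOmega} with the K\"ahler-type energy identity recorded just above the proposition, exploiting two structural inputs: that $v$ is a \emph{section}, and that the connection associated to $\Theta$ (Definition~\ref{defn. theta connection}) makes the horizontal and vertical distributions $d\Theta$-orthogonal.

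Working on $v^{-1}(E\setminus K)$, where Lemma~\ref{lem:Deltarv=vOmega} gives $\Delta(r\circ v)\,\omega_\Sigma = v^*\Omega$, I would first use $\pi\circ v=\id_\Sigma$ to write $v^*\Omega = v^*\Omega_\beta - v^*\pi^*\beta = v^*\Omega_\beta - \beta$, with $\beta=\ell\,\omega_\Sigma$ as in Notation~\ref{notation. ell omega beta}; so it is enough to compute $v^*\Omega_\beta$. Decompose $dv=(dv)^{\text{\rm h}}+(dv)^{\text{\rm v}}$ along $TE\cong HTE\oplus VTE$. Since $v$ is a section, $(dv)^{\text{\rm h}}$ is forced to be the canonical horizontal lift of $\id_{T\Sigma}$; and by the definition of the $\Theta$-connection, $d\Theta$ kills the pairing of any horizontal vector with any vertical vector, while $\pi^*\beta$ kills all vertical vectors, so the mixed terms in $v^*\Omega_\beta$ vanish. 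One is left with a purely horizontal contribution, governed by $\Omega_\beta|_{HTE}$ --- here Proposition~\ref{prop. curvature controlled by f} ($\Omega|_{HTE}=f\,\pi^*\omega_\Sigma|_{HTE}$) together with $\pi^*\beta|_{HTE}=\ell\,\pi^*\omega_\Sigma|_{HTE}$ does the work --- and a purely vertical contribution.

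For the vertical contribution I would invoke holomorphicity: on $v^{-1}(E\setminus K)$ one may take $J$ to preserve the splitting and, away from a fiberwise-compact set absorbed into $K$, to be block diagonal for it (Definition~\ref{defn. suitable for sections} and the remark following), so that the induced metric decomposes orthogonally, $(dv)^{\text{\rm v}}$ is complex linear for $j$ and $J|_{VTE}$, and $J|_{VTE}$ is compatible with $\Omega|_{VTE}$. Thus $\Omega_\beta\big((dv)^{\text{\rm v}},(dv)^{\text{\rm v}}\big)=\tfrac12|(dv)^{\text{\rm v}}|^2\,\omega_\Sigma$, exactly the vertical instance of $2v^*\Omega_\beta=(|\del_J v|^2-|\delbar_J v|^2)\,\omega_\Sigma$. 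Assembling the horizontal and vertical pieces, subtracting the $\beta$-term, and comparing with $\Delta(r\circ v)\,\omega_\Sigma=v^*\Omega$ produces the stated identity; the curvature term in the statement is precisely what survives from the horizontal piece once the $\pi^*\beta$ contribution is removed. The final assertion is then immediate: $\tfrac12|(dv)^{\text{\rm v}}|^2\ge 0$ always, so if $f(v)\ge 0$ then $\Delta(r\circ v)\ge 0$ on $v^{-1}(E\setminus K)$, i.e.\ $r\circ v$ is subharmonic there --- the hypothesis that $\ell$ be large being used only to guarantee that $\Omega_\beta$ is symplectic and $J$ is $\Omega_\beta$-tame, which is what makes the energy identity available in the first place.

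I expect the main obstacle to be bookkeeping rather than any single estimate: one must verify that the horizontal/vertical splitting is genuinely orthogonal for the metric on the region in question (this is where block-diagonality of $J$ outside a fiberwise-compact set is used), keep the normalization of $\omega_\Sigma$ consistent between Lemma~\ref{lem:Deltarv=vOmega} and the energy identity, and carefully track the factor of $\ell$ as the $\pi^*\beta$ contribution is added and then removed. None of this is deep, but it is exactly where sign and normalization slips are easy to introduce.
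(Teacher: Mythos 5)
Your argument is correct and is essentially the paper's own proof: both combine Lemma~\ref{lem:Deltarv=vOmega} with the energy identity for holomorphic sections and the horizontal/vertical splitting, the only difference being that you decompose $v^*\Omega_\beta$ directly (checking that the cross terms vanish by $d\Theta$-orthogonality, and noting where block-diagonality of $J$ outside $K$ is needed) whereas the paper first writes $v^*\Omega_\beta=\tfrac12|dv|^2\,\omega_\Sigma$ and then splits the norm. The one step you rightly flag as delicate is in fact ambiguous in the source: under the literal normalization of Proposition~\ref{prop. curvature controlled by f} the surviving horizontal term is $f(v)\,\omega_\Sigma$, whereas the stated conclusion $f(v)\,\ell$ (and the paper's intermediate identity $\tfrac12|(dv)^{\text{\rm h}}|^2\,\omega_\Sigma=(f(v)+1)\beta$) corresponds to the convention $\Omega|_{HTE}=f\,\pi^*\beta|_{HTE}$ --- a normalization wobble that does not affect the subharmonicity conclusion for $f\ge 0$.
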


\begin{proof}
If $\delbar_J v = 0$, then we obtain
	\eqn\label{eq:vOmegabeta}
v^*\Omega_\beta = \frac12 |dv|^2\, \omega_\Sigma.
	\eqnd
Splitting $dv = (dv)^{\text{\rm v}} + (dv)^{\text{\rm h}}$ into horizontal and vertical components of $dv$,
we compute
	$$
\frac12|(dv)^{\text{\rm h}}|^2\, \omega_\Sigma = (f(v) + 1)\beta
	$$
where $f$ is the function given by \eqref{eq:Omega-f-beta}.
Rewriting \eqref{eq:vOmegabeta} as
	\eqn\label{eq:v*Omega+beta}
v^*\Omega + v^*\pi^*\beta = \frac12|(dv)^{\text{\rm v}}|^2 \omega_\Sigma + (f(v)+1) \beta
	\eqnd
and using $\pi \circ v = \id_\Sigma$, we obtain
    \eqn\label{eqn. v*Omega computation}
    v^*\Omega = \frac12|(dv)^{\text{\rm v}}|^2 \omega_\Sigma + (f(v) + 1) \beta - (\pi \circ v)^*\beta
    = \frac12|(dv)^{\text{\rm v}}|^2 \omega_\Sigma + \ell f(v) \omega_\Sigma.
	\eqnd

Now combine Lemma~\ref{lem:Deltarv=vOmega} with~\eqref{eqn. v*Omega computation}.
\end{proof}

\subsection{Boundary conditions}
The definitions below simply give names to the conditions we can guarantee in our set-up.

\begin{defn}\label{defn. E translation invariant}
Let $E \to \Sigma$ be a Liouville bundle, and fix strip-like ends of $\Sigma$. We say that $E$ is {\em translation-invariant over the strip-like ends} if, over the strip-like ends, $E$ is equipped with an isomorphism to a pullback bundle
	\eqnn
	\xymatrix{
	p^*E_{[0,1]}  \ar[r] \ar[d] & E_{[0,1]} \ar[d] \\
	R \times [0,1] \ar[r]^p & [0,1]
	}
	\eqnd
where $R$ is either $(-\infty,0]$ (for incoming strips) or $[0,\infty)$ (for outgoing strips).
\end{defn}

The following is a variation on Section 2.1 of Seidel's work~\cite{seidel-long-exact-sequence}.

\begin{defn}[Lagrangian boundary conditions for bundles]\label{defn. lagrangian boundary condition}
Fix a Liouville bundle $E \to \Sigma$ and strip-like ends on $\Sigma$. Assume $E$ is translation-invariant over the strip-like ends (Definition~\ref{defn. E translation invariant}).
A Lagrangian boundary condition {\em suitable for our purposes}  is an $(n+1)$-dimensional submanifold
$\cL \subset E|_{\del \Sigma}$ equipped with the data of a smooth function $K_{\cL}: \cL  \to \RR$,
 called a Liouville primitive, such that
\begin{enumerate}
\item $\cL$ is fiberwise conical near infinity,
\item $\pi|_\cL: \cL \to \del \Sigma$ is a submersion,
\item $\Theta|_{\cL} = dK_\cL$, and
\item $K_\cL$ is fiberwise (affine) linear outside a subset $K \subset E$ which is proper over $\Sigma$, and
\item $\cL$ is translation-invariant on the strip-like ends. (For example, for a strip-like end modeled on $(-\infty,0]$, $\cL$ can be obtained by pulling back a pair of branes $L_0 \subset E_0, L_1 \subset E_1$, along the projection $(-\infty,0] \times [0,1] \to [0,1]$.)
\end{enumerate}
\end{defn}

\begin{remark}
The conditions of Definition~\ref{defn. lagrangian boundary condition} imply that for every $z \in \del \Sigma$, the fiber $\cL_z$ is an exact Lagrangian submanifold of $E_z$, and
that $K_\cL|_{\cL_z}$ is  a Liouville primitive.
\end{remark}

The following is the fibration version of non-negative wrapping.

\begin{defn}\label{defn. non-negative lagrangian boundary condition}
Let $\cL$ be a boundary condition as in Definition~\ref{defn. lagrangian boundary condition}.
We call $\cL$ \emph{nonnegative (resp. nonpositive) relative to $\del \Sigma$}
if $\Theta(\xi) \geq 0$ (resp. $\Theta(\xi) \leq 0$)
for all $\xi \in T_x\cL$ whose projection to $T \del \Sigma$ compatible with the orientation of $\del \Sigma$.
\end{defn}

Finally, in our setting, one can arrange for the following:

\begin{defn}\label{defn. our kind of connection}
Let $\Sigma = D^2 \setminus \{z_0,\ldots,z_k\}$, and let $\cL \subset E \to \Sigma$ be a Lagrangian boundary condition as in Definition~\ref{defn. lagrangian boundary condition}. Choose also a strip-like end for every $z_i$, with $z_0$ incoming and others outgoing.

A connection induced by a Liouville form $\Theta$ (as in Remark~\ref{remark. can choose liouville form}) will be called {\em our kind of connection} if it is trivial on $\del \Sigma$ outside the strip-like ends, and if the connection is invariant under the translation on the strip-like ends.
\end{defn}

\begin{example}
For example, the conditions of Definition~\ref{defn. our kind of connection} hold on a strip-like end modeled after $(-\infty,0] \times [0,1]$ if $\Theta$ and $E$ are both pulled back along the projection $(-\infty,0] \times [0,1] \to [0,1]$.)
\end{example}

\begin{remark}
In all our examples, $E \to \Sigma$ is pulled back along a map $\Sigma \to |\Delta^n|$ which collapses strip-like ends to a single edge of $|\Delta^n|$, and collapses the rest of $\del \Sigma$ to vertices of $|\Delta^n|$; as such, the connections pulled back from a bundle $E' \to |\Delta^n|$ satisfy Definition~\ref{defn. our kind of connection}.
\end{remark}

\begin{remark}
If $\Theta$ induces our kind of connection (Definition~\ref{defn. our kind of connection}), and if for every connected component $c_i \subset \del \Sigma$, we choose some $x_i \in c_i$ and a brane $L_i \subset E_{x_i}$, one obtains a Lagrangian boundary condition $\cL$ by parallel transport along $\del \Sigma$. $\cL$ is non-negative in the sense of Definition~\ref{defn. non-negative lagrangian boundary condition}.
\end{remark}

\subsection{The \texorpdfstring{$C^0$}{C0} estimate}

\begin{theorem}\label{thm:C0estimate+CL}
Let $(\Sigma,j)$ be a Riemann surface $\Sigma  = D^2 \setminus \{z_0, \ldots, z_k\}$ where $\{z_0, \ldots, z_k\} \subset \del D^2$, and let $\pi: E \to \Sigma$ be a Liouville bundle with translation invariance (Definition~\ref{defn. E translation invariant}). Further choose:
\begin{itemize}
	\item A non-negative boundary condition $\cL$ as in Definitions~\ref{defn. lagrangian boundary condition} and~\ref{defn. non-negative lagrangian boundary condition}.
	\item A conical-near-infinity almost-complex structure $J$ on $E$ for which $E \to \Sigma$ is $(J,j)$-holomorphic,
	\item A Liouville form $\Theta \in \Omega^1(E)$ inducing our kind of connection $\Theta$ (Definition~\ref{defn. our kind of connection}) which is $(-C)$-pinched (Definition~\ref{defn:+curvature}),   and
	\item For each $i \in 1, \ldots, k$, a parallel transport chord $x_i$ from the $(i-1)$st boundary brane to the $i$th boundary brane, along with a parallel transport chord from $L_0$ to $L_k$.
\end{itemize}

Suppose that  $v: \Sigma \to E$ is a $(j,J)$-holomorphic section such that
\begin{itemize}
	\item $v(\del \Sigma) \subset \cL$, and
	\item $v$ converges to the parallel transport chords $x_i$ along the strip-like ends.
\end{itemize}

Then there exists some subset $A \subset E$, proper over $\Sigma$, and depending only on the $x_i$, such that
	$$
\Image v \subset A.
	$$
\end{theorem}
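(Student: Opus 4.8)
The strategy is to combine the subharmonicity of the conical coordinate along the interior of $\Sigma$ (Proposition~\ref{prop:Deltarv>0}) with the non-negativity of the boundary condition $\cL$ (Definition~\ref{defn. non-negative lagrangian boundary condition}) and the a priori control on the strip-like ends coming from the assumed asymptotics to the chords $x_i$. The set $A$ will be a fiberwise compact enlargement of $K$ (the set from Definitions~\ref{defn. J conical near infinity} and~\ref{defn. lagrangian boundary condition}, taken large enough to also contain the parallel transport chords $x_i$ over a compact subset of $\Sigma$, and to agree with a translation-invariant region over the strip-like ends). One then shows that $v$ cannot escape $A$ by a maximum principle argument applied to the function $r \circ v$, where $r=e^s$ is the fiberwise conical coordinate defined on the complement of $K$.

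\textbf{Key steps.} First I would fix the set $A$: extend $K$ to a set $A \subset E$, proper over $\Sigma$, that (i) contains the images of all the chords $x_i$, (ii) over each strip-like end is translation-invariant and contains the relevant chord's worth of cylinder up to some fixed conical height $R_0$, and (iii) has fiberwise conical (cylindrical) complement, so that $r = e^s$ makes sense on $E \setminus A$. Second, using the convergence of $v$ to the chords $x_i$ along the strip-like ends, I obtain an a priori $C^0$ bound there: outside a compact subset of each strip-like end, $v$ lands in $A$ (this is where the translation-invariant model and exponential convergence of Floer solutions enter; note also the barrier argument of Remark~\ref{remark. barrier in families} handles the $\del M$ direction in the sectorial case). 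Third, suppose for contradiction that $v$ exits $A$, i.e., $\sup_\Sigma (r\circ v) =: R_1 > R_0$, with the sup attained (it is, by the step-two control on the ends and properness) at an interior or boundary point of $\Sigma$. On the open set $v^{-1}(E \setminus K) \subset v^{-1}(E\setminus A')$ for a slightly smaller $A'$, Proposition~\ref{prop:Deltarv>0} gives $\Delta(r\circ v) = \tfrac12|(dv)^{\mathrm v}|^2 + f(v)\ell \geq 0$ provided $\ell$ is chosen large relative to the pinching constant $C$ (here I use $(-C)$-pinchedness: $f(v)\ell \geq -C\ell$ is not automatically $\geq 0$, so I actually need the sharper statement---see the subtlety below). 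Fourth, rule out an interior maximum by the strong maximum principle for subharmonic functions, and rule out a boundary maximum using the non-negativity of $\cL$: at a boundary point, the outward normal derivative of $r\circ v$ would have to be positive (Hopf lemma), but the computation $d(r\circ v) = v^*\Theta \circ j$ from Lemma~\ref{lem:Deltarv=vOmega}, combined with $v(\del\Sigma)\subset\cL$ and $\Theta(\xi)\geq 0$ along $\cL$ in the boundary-orientation direction, forces the appropriate tangential/normal derivative to have a sign incompatible with a strict boundary maximum. This contradiction shows $R_1 \leq R_0$, i.e., $\Image v \subset A$.

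\textbf{Main obstacle.} The delicate point is reconciling $(-C)$-pinchedness with the subharmonicity needed for the maximum principle: $f(v)\ell$ is only bounded below by $-C\ell$, not by $0$, so naively $r\circ v$ is not subharmonic. The resolution---which is the technical heart of the argument and presumably the content the authors develop in the companion estimates (cf.\ the reference to Theorem~\ref{thm:C0estimate+CL} replacing a bundle analogue of Theorem~\ref{thm:hcL=mu2ccL})---is that $\Omega_\beta$-energy is controlled a priori by the chords $x_i$ (topological/action considerations), so the region where $f(v) < 0$ contributes a bounded amount; one then applies a maximum principle in an integrated or Stokes-theorem form, or uses that $\Delta(r\circ v) \geq -C\ell$ together with an a priori area bound to get a pointwise bound on $r\circ v$ via a mean-value inequality. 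A second, more routine obstacle is making the boundary behavior precise when $\del\Sigma$ meets the strip-like ends and, in the sectorial case, when branes approach $\del E$; the former is handled by the translation-invariance hypotheses, the latter by the barrier of Remark~\ref{remark. barrier in families}.
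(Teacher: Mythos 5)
Your skeleton matches the paper's proof exactly: subharmonicity of $r\circ v$ from Proposition~\ref{prop:Deltarv>0} on the cylindrical region, the boundary computation $\frac{\del(r\circ v)}{\del\nu} = -\Theta\bigl(\frac{\del v}{\del\tau}\bigr)\leq 0$ from non-negativity of $\cL$, control on the strip-like ends from the asymptotics to the chords $x_i$, and the strong maximum principle. The one place you diverge is the point you yourself flag as the main obstacle, the $(-C)$-pinched case, and there your proposed resolution is both different from and heavier than the paper's. You suggest controlling the region where $f(v)<0$ via an a priori energy/action bound and then applying an integrated maximum principle or a mean-value inequality. The paper needs none of that: since the differential inequality $\Delta(r\circ v)\geq -C$ holds pointwise together with the Neumann-type sign condition $\frac{\del(r\circ v)}{\del\nu}\leq 0$, one can either quote the standard a priori sup-bound for such subsolutions (\cite[Theorem 3.7]{GT}) or, more explicitly, pass to the slit-domain representation $\RR\times[0,w]$ of $\Sigma$ and subtract the bounded comparison function $\frac{C}{2}t^2$; the corrected function $g=(r\circ v)-\frac{C}{2}t^2$ is subharmonic, its normal derivative condition is unchanged because $\frac{\del t}{\del\nu}=0$ along $\del\Sigma$, and the correction is uniformly bounded since $t\in[0,w]$. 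The lesson is that the $C^0$ estimate here is purely a comparison-function argument exploiting the bounded width of the domain in the $t$-direction; no energy identity is required (the energy estimate is developed separately, in Proposition~\ref{vertE+f}, for compactness rather than for the $C^0$ bound). Your route would likely also close, but it entangles the $C^0$ and energy estimates unnecessarily and the step from an $L^1$-type bound to a pointwise bound on $r\circ v$ would need more care than you indicate.
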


\begin{proof}
By Proposition~\ref{prop:Deltarv>0} we have
	$$
\Delta(r \circ v) \omega_\Sigma = \left({\frac12}|(dv)^{\text{\rm v}}|^2 + f(v)\right) \beta
	$$
wherever the image of $v$ is contained in
the cylindrical region of $E$. (That is, when $v$ has image outside of the set $K$ of the Proposition.)

We first consider the case of nonnegative curvature, i.e., $f \geq 0$.
 By the nonnegativity hypothesis, $r\circ v$ is a subharmonic function on $\Sigma$. Therefore it cannot have any local
maximum at an interior point and we have only to check its boundary behavior.

We compute the radial derivative $\frac{\del (r\circ v)}{\del \nu}$---i.e., the derivative along an outward pointing boundary vector:
	$$
\frac{\del (r\circ v)}{\del \nu}  =  dr \left(\frac{\del v}{\del \nu}\right)
= \Theta \circ J\left(\frac{\del v}{\del \nu}\right).
	$$
Since $v$ is $(j,J)$ holomorphic, we have
	$$
J\left(\frac{\del v}{\del \nu}\right) = J \circ dv\left(\frac{\del}{\del \nu}\right)
= dv \left(j\frac{\del}{\del \nu}\right).
	$$
Therefore if $\del / \del \tau$ is any positively oriented tangent vector along $\del \Sigma$, we have
	$$
\frac{\del(r\circ v)}{\del \nu} = - \Theta\left(\frac{\del v}{\del \tau}\right) \leq 0
	$$
by the nonnegativity assumption of $\cL$.

Therefore the strong maximum principle implies that $r\circ u$ cannot have
boundary local maximum anywhere on $\del \Sigma$.

For the $(-C)$-pinched case, we have $\inf f \ell \geq -C$  and so the function $r\circ v$ satisfies the differential inequality
	$$
\Delta (r\circ v) \geq -C, \quad \frac{\del(r\circ v)}{\del \nu} \leq 0.
	$$
At this stage, we can apply the standard elliptic estimates (see for example \cite[Theorem 3.7]{GT}).
Another more explicit way of proceeding is to
consider the function $g = (r\circ v) - \frac{C}2 t^2$ where $t: \Sigma \to \RR$ is the
pull-back function of the standard coordinates $(\tau,t)$ of $\RR \times [0, w]$ for some $w> 0$
via the \emph{slit domain representation} of the conformal structure of $\Sigma = D^2 \setminus \{z_0, \cdots, z_k\}$. (See \cite[Section 3.2]{bko:wrapped}, for example.)

Then $g$ is a subharmonic function. We can apply the strong maximum principle to the function $g$ to conclude
	$$
\sup_{z \in \Sigma^{\text{\rm end}}} g(z) \leq R_0
	$$
satisfying $\frac{\del(r\circ v)}{\del \nu} \leq 0$ since $\frac{\del t}{\del \nu} = 0$
along the boundary $\del \Sigma$.
 Therefore we conclude
	$$
\sup_{z \in \Sigma^{\text{\rm end}}} r \circ v(z) \leq R_0 + \frac{C}{2}.
	$$
This finishes the proof.
\end{proof}

\subsection{The energy estimate}

Fix a non-negative Lagrangian boundary condition $\cL$ (Definition~\ref{defn. lagrangian boundary condition} and~\ref{defn. non-negative lagrangian boundary condition}). We have that
	\eqn\label{eq:i*Theta}
\Theta|_{\cL} = dK_\cL + \pi^*(\kappa_\cL)
	\eqnd
for a one-form $\kappa_\cL \in \Omega^1(\del \Sigma)$ which vanishes on the strip-like
ends. (In our case, $\kappa_\cL = 0$ on all of $\del \Sigma$, but we include $\kappa_{\cL}$ in what follows for the interested reader.)

The action functional on the path space
	$$
\cP(L_0,L_1) = \{ \gamma \in C^\infty([0,1],M) \mid \gamma(0) \in L_0, \, \gamma(1) \in L_1\}
	$$
is given by
	$$
\cA_{L_0,L_1}(\gamma) = - \int \gamma^*\theta  + K_{L_1}(\gamma(1)) - K_{L_0}(\gamma(1)).
	$$
Using \eqref{eq:i*Theta}, we also obtain
	\eqn\label{eq:intv*Omega}
\int v^*\Omega = \sum_{e \in I^-} \cA_{L_0,L_1}(x_e) - \sum_{e \in I^+} \cA_{L_0,L_1}(x_e)
+ \int_{\del \Sigma} \kappa_\cL.
	\eqnd
On the other hand, we derive from \eqref{eq:v*Omega+beta}
	$$
\frac12|(dv)^{\text{\rm v}}|^2 = v^*\Omega - f( v) \beta
	$$
and hence
	$$
\frac12\int_\Sigma |(dv)^{\text{\rm v}}|^2 = \sum_{e \in I^-} \cA_{L_0,L_1}(x_e) - \sum_{e \in I^+} \cA_{L_0,L_1}(x_e)
+ \int_{\del \Sigma} \kappa_\cL - \int_\Sigma f(v) \beta.
	$$
Here we would like to mention that both the integrals $\int_{\del \Sigma} \kappa_Q$ and
$\int_\Sigma f(v)\, \beta$ are finite since $\kappa_Q = 0$ and $f(v) =0$ on the strip-like region
of $\Sigma$.  We also have
	$$
\frac12 \int_W |(dv)^{\text{\rm h}}|^2 = \int_W (f(v) + 1)\, \beta =  \int_W f(v)\, \beta  + \int_W \beta
	$$
for any compact domain $W \subset \Sigma$.

We summarize the above discussion into the following
uniform upper bound for the energy \emph{on any compact domain $W \subset \Sigma$} satisfying the property that $\kappa_Q = 0 = f(v)$ on $\Sigma \setminus W$.

\begin{prop}\label{vertE+f} Let $(\pi:E \to \Sigma,\Omega)$ and $\cL$ be as above and $W \subset \Sigma$ be
any given compact subdomain of $\Sigma$. Then
	\eqn
\frac12\int_W |dv|^2  = \int_\Sigma v^*\Omega + \int_W \beta
	\eqnd
for any $(j,J)$-holomorphic section $v: \Sigma \to E$.
\end{prop}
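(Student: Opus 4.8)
The plan is to derive the asserted formula pointwise from the holomorphicity of $v$ and then integrate; beyond the identities already collected in this section, essentially no new input is needed.

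\emph{Step 1 (reduce to $\Omega_\beta$).} Since $v$ is $(j,J)$-holomorphic, $\delbar_J v=0$, so the identity $2v^*\Omega_\beta=(|\del_J v|^2-|\delbar_J v|^2)\,\omega_\Sigma$, valid for any smooth section, collapses to $v^*\Omega_\beta=\frac12|dv|^2\,\omega_\Sigma$; this is exactly \eqref{eq:vOmegabeta}. Now write $\Omega_\beta=\Omega+\pi^*\beta$. Because $v$ is a \emph{section}, $\pi\circ v=\id_\Sigma$, hence $v^*\pi^*\beta=(\pi\circ v)^*\beta=\beta$, and Step~1 becomes the pointwise identity
\[
\frac12 |dv|^2\,\omega_\Sigma \;=\; v^*\Omega + \beta .
\]
Integrating over the compact subdomain $W$ gives $\frac12\int_W|dv|^2=\int_W v^*\Omega+\int_W\beta$.

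\emph{Step 2 (upgrade $\int_W$ to $\int_\Sigma$).} It remains to check $\int_W v^*\Omega=\int_\Sigma v^*\Omega$; note $\int_\Sigma v^*\Omega$ is finite, being expressed in \eqref{eq:intv*Omega} as a finite sum of chord actions plus $\int_{\del\Sigma}\kappa_\cL$. By the standing hypothesis on $W$, $f(v)=0=\kappa_\cL$ on $\Sigma\setminus W$, and $\Sigma\setminus W$ is a union of truncated strip-like ends over which $E$, $\Theta$, $J$, and $\cL$ are translation-invariant and $v$ converges to the parallel transport chords $x_i$; over these ends $v$ is parallel, so its vertical differential $(dv)^{\text{\rm v}}$ vanishes. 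Feeding $f(v)=0=(dv)^{\text{\rm v}}$ into \eqref{eqn. v*Omega computation} gives $v^*\Omega=\frac12|(dv)^{\text{\rm v}}|^2\,\omega_\Sigma+f(v)\,\beta=0$ on $\Sigma\setminus W$, whence $\int_{\Sigma\setminus W}v^*\Omega=0$ and $\int_W v^*\Omega=\int_\Sigma v^*\Omega$. Substituting into Step~1 finishes the proof.

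\emph{Main obstacle.} The only point that is not pure bookkeeping is the vanishing of $v^*\Omega$ on $\Sigma\setminus W$ in Step~2 --- equivalently, that a finite-energy holomorphic section asymptotic to a parallel transport chord is genuinely parallel over the translation-invariant strip-like ends. This is precisely where the translation-invariance of $(E,\Theta,J,\cL)$ over the ends and the parallel-transport-chord asymptotics get used. If one prefers not to establish this pointwise, one can instead use $v^*\Omega=\frac12|(dv)^{\text{\rm v}}|^2\,\omega_\Sigma\ge 0$ on $\Sigma\setminus W$ to obtain at least the uniform upper bound $\frac12\int_W|dv|^2\le \int_\Sigma v^*\Omega+\int_W\beta$, which is what the subsequent Gromov-compactness arguments actually require.
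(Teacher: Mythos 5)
Your Step~1 is exactly the computation the paper itself performs (the proposition is stated as a summary of the preceding displayed identities): $\delbar_J v=0$ gives $v^*\Omega_\beta=\tfrac12|dv|^2\,\omega_\Sigma$, and $\pi\circ v=\id_\Sigma$ gives $v^*\pi^*\beta=\beta$, so the exact identity $\tfrac12\int_W|dv|^2=\int_W v^*\Omega+\int_W\beta$ holds for every compact $W$. The problem is Step~2. The claim that ``over these ends $v$ is parallel, so $(dv)^{\mathrm{v}}$ vanishes'' is not justified and is false in general: a finite-energy holomorphic section asymptotic to a parallel transport chord converges to it exponentially in the strip-like coordinates, but it is not literally parallel on any truncated end (otherwise every holomorphic strip would be constant outside a compact set). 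Consequently $v^*\Omega=\tfrac12|(dv)^{\mathrm{v}}|^2\,\omega_\Sigma$ is merely \emph{nonnegative} on $\Sigma\setminus W$, not zero, and the passage from $\int_W v^*\Omega$ to $\int_\Sigma v^*\Omega$ yields only the inequality $\tfrac12\int_W|dv|^2\le\int_\Sigma v^*\Omega+\int_W\beta$, not the stated equality.

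That said, your fallback is the correct reading. The paper's own text introduces the proposition as a ``uniform upper bound for the energy'' on domains $W$ with $\kappa_\cL=0=f(v)$ outside $W$, and the equality sign in the display should be understood either as ``$\le$'' or with $\int_\Sigma v^*\Omega$ replaced by $\int_W v^*\Omega$; combined with \eqref{eq:intv*Omega}, which bounds $\int_\Sigma v^*\Omega$ by the chord actions, the inequality is all that the subsequent Gromov--Floer compactness argument uses. So: commit to the inequality (or to the exact identity with $\int_W v^*\Omega$) and delete the parallelism claim, and your argument is complete and coincides with the paper's.
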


\begin{remark}
The reason why we restrict to compact domain $W \subset \Sigma$ is that
the form $\beta$ may not be integrable, unlike $f(v) \beta$.
Moreover, the integrals above depend on the section $v$ and may not be uniformly
bounded, mainly because the strip-like regions of $\Sigma = D^2 \setminus \{z_0, \ldots, z_k\}$
vary depending on the configuration of $\{z_0, \ldots, z_k\}$.
\end{remark}

\begin{remark}
By requiring translation invariance of $\omega_\Sigma$ on the strip-like ends of $\Sigma$, we conclude that the full integral $\int_\Sigma \omega_\Sigma$ is
infinite whenever there is at least one puncture on $\Sigma$. In choosing the 2-form $\beta = \ell \, \omega_\Sigma$, there are two competing interests:
\begin{itemize}
\item One one hand, we need the form $\Omega + \pi^*\beta$ to be nondegenerate,
\item On the other hand, we wish to make the form $\beta$ have finite integral over $\Sigma$.
\end{itemize}
In general we cannot achieve both wishes simultaneously. This is the reason why we
need to consider the horizontal energy on compact domains $W$ e.g., on $W = \Sigma \setminus \Sigma^{\text{\rm end}}$.
\end{remark}

\begin{remark}
However, because we are given a connection that is translation-invariant on the strip-like ends, when we restrict $v$ along a strip-like end, we may write
	$$
v(\tau,t) = (\tau,t,u(\tau,t))
	$$
where $u$ is a function satisfying
	$$
\frac{\del u}{\del \tau} + J \left(\dudt - X_{H}(\tau,t,u)\right) = 0.
	$$
This equation can be studied in the standard way of
classical Floer theory.
\end{remark}

Therefore with the uniform $C^0$-estimates at our disposal,
we can apply the Gromov-Floer type of compactness arguments to the moduli space of
pseudoholomorphic sections. (See \cite[Section 2.4]{seidel-long-exact-sequence} for some relevant details.)

\clearpage

\section{Non-wrapped Fukaya categories in Liouville bundles}\label{section. unwrapped}

The present section is occupied by  the construction of the $A_\infty$ category $\cO_j$ associated to
a simplex $j: |\Delta^n| \to B$. As usual we have fixed a Liouville bundle $E \to B$.

\subsection{Choice of objects}

\begin{choice}[$\LL_b$ and a partial ordering.]\label{choice. ordering on objects}
For every point $b \in B$, we choose a countable collection $\LL_b$ of eventually conical branes in the fiber $E_b$. We moreover choose a function
    \eqnn
    w: \LL_b \to \ZZ_{\geq 0} = \{n \in \ZZ , n \geq 0\}.
    \eqnd
We will often abbreviate a pair $(L_b, w(L_b))$ in the graph of $w$ by
    \eqnn
    L^{(w)}
    \eqnd
omitting $b$, and omitting the dependence of $w$ on $L_b$.

For a fixed $b \in B$, the collection $\{L_b\}$ is countable, so we may choose the function $w$ so that given two branes $L$ and $L'$, $L$ and $L'$ are either transverse (in the fiber), or $L=L'$ and $w=w'$.
\end{choice}

\begin{remark}\label{remark. cofinal sequences}
In~\cite{oh-tanaka-actions}, we will choose the ordering $w$ to encode cofinal sequences of non-negative wrappings of branes. We don't need these details in the present work, so we refer the reader to Section~2.2 of~\cite{oh-tanaka-actions} for more.
\end{remark}

\subsection{Choices of Floer data}

\begin{remark}\label{remark. transversality choices}
Because we have already chosen a favorite collection of branes (Choice~\ref{choice. ordering on objects}), any time we discuss a Lagrangian brane here, we assume that it equals $L^{(w)}$ for some $w \in \ZZ_{\geq 0}$ and some $L \in \LL_b$ (for some $b \in B$). In particular, if two Lagrangians are in the same fiber $E_b$, they are either equal or transverse. This is not strictly necessary, but it will make certain things easier. See also Warning~\ref{warning. transversality of branes}.
\end{remark}

\begin{notation}[$\vec L$]\label{notation. vec L}
Recall we have fixed a Liouville bundle $E \to B$.
Fix $d \geq 0$ and a smooth map $h: |\Delta^d| \to B$. We denote by
	\eqnn
	\vec L = (L_0,\ldots, L_d)
	\eqnd
an ordered $(d+1)$-tuple of branes with $L_i \subset h^*E$ contained above the $i$th vertex (Definition~\ref{defn. ith vertex})  of $|\Delta^d|$.
\end{notation}

\begin{remark}
In later notation, $h$ will play the role of the composite $\beta \circ j$. (See Definition~\ref{defn. mu^d counting} and Remark~\ref{remark. depends only on j o beta}.)
\end{remark}

Consider an ordered $(d+1)$-tuple $\vec L$ (Notation~\ref{notation. vec L}).

Because $|\Delta^d|$ is a smooth manifold with corners, one can construct a global 1-form $\Theta_{\vec L} \in \Omega^=(h^*E; \RR)$ realizing a Liouville structure on each fiber of $h^*E$. (Remark~\ref{remark. can choose liouville form}.)
Using the natural system maps $\nu_{\id}: \overline{\cS}_{d+1}^\circ \to |\Delta^d|$, we may also choose almost-complex structures $\JJ$ on $\nu_{\id}^* h^* E$ suitable for counting sections. (Definition~\ref{defn. suitable for sections}.)

We now specify the choices we make to guarantee that the moduli spaces of holomorphic sections $\cS_r \subset \overline{\cS^{\circ}_{d+1}} \xra{\nu} |\Delta^d| \to h^*E$ are well-behaved moduli spaces.

\begin{choice}[Liouville forms and almost complex structures suitable for counting sections]
\label{choice. floer data}

We begin with $d=0$. Note that $\Theta_{\vec L}$ is simply a choice of Liouville structure on the fiber of $E \to B$ determined by $h$, and likewise for $\JJ_{\vec L}$.\footnote{When $d=0$, a choice of $\JJ_{\vec L}$ suitable for counting sections is simply a choice of almost-complex structure $J$ on $E_b = E$ for which $J$ is compatible with the symplectic form and eventually cylindrical (Definition~\ref{defn. J conical near infinity}).} Given this choice of $\Theta_{\vec L}$, we may assume that each brane $L_i$ admits a primitive $f_i: L_i \to \RR$ such that $df_i = \Theta_{\vec L}|_{L_i}$ and such that $f_i$ has compact support for all $i=0, \ldots, d$. (In other words, we require that $[f^*\Theta] = 0$ in $H^1_c(L;\RR)$). (Any brane admits a deformation so that this holds.)

We proceed inductively on $d$. Assume that for all $d' < d$, for all $h': |\Delta^{d'}| \to B$, and for all $(d'+1)$-tuples $\vec L' = (L_0',\ldots, L_{d'})$, we have chosen $(\Theta_{\vec L'}, \JJ_{\vec L'})$ on $(h')^*E$.

Fix an ordered $(d+1)$-tuple $\vec L$. We choose $(\Theta_{\vec L}, \JJ_{\vec L})$ on $h^*E$ subject to the following conditions:

    \newenvironment{floer-data}{
    	  \renewcommand*{\theenumi}{($\Theta$\arabic{enumi})}
    	  \renewcommand*{\labelenumi}{($\Theta$\arabic{enumi})}
    	  \enumerate
    	}{
    	  \endenumerate
    }

    \begin{floer-data}
		\item\label{item. floer data trivial} (Constancy implies constancy.) If $h$ is constant, then so is $(\Theta_{\vec L},\JJ_{\vec L})$. More concretely, if $h: |\Delta^d| \to B$ is constant, then for a constant map $p: |\Delta^d| \to v$ to some (hence every) point of $|\Delta^d|$, we have that $\Theta_{\vec L} = p^* \Theta_{\vec L}|_{(h^*E)_v}$ and $\JJ_{\vec L} = p^* (\JJ_{\vec L})|_{(h^*E)_v}$.
    	\item \label{item. floer data inductive} (Inductive step.) If $\vec L' \subset \vec L$ is an order-preserving inclusion, consider the induced map $|\Delta^{d'}| \to |\Delta^{d}|$. Then the data $(\Theta_{\vec L'}, \JJ_{\vec L'})$ is equal to the pullback of $(\Theta_{\vec L}, \JJ_{\vec L})$
    along this induced map.
		\item\label{item. floer data smooth} (Smooth collaring.) Recall the collaring choices from Choice~\ref{choice. simplicial collars}. Suppose that $F \subset |\Delta^d|$ is a codimension one face. $F$ in particular determines an ordered $d$-tuple $\vec L' \subset \vec L$. We demand
			\eqnn
			\pi_F^* \Theta_{\vec L'} = \Theta_{\vec L}|_{U_F},
			\qquad
			\pi_F^* \JJ_{\vec L'} = (\JJ_{\vec L})|_{U_F}
			\eqnd
		where $\pi_F$ and $U_F$ are as in Choice~\ref{choice. simplicial collars}.
		\item\label{item. floer data transverse} (Transversality.) For any $0 \leq i < j \leq d$, let $\Pi_{ij}$ be the parallel transport along the simplicial edge from the $i$th vertex of $|\Delta^d|$ to the $j$th. (See Definition~\ref{defn. theta connection}.) We demand that $\Pi_{ij}(L_i)$ and $L_j$ are transverse.\footnote{See also Warning~\ref{warning. transversality of branes} regarding compatibility with~\ref{item. floer data trivial}.}
    	\item\label{item. floer data regular}  (Regularity.) Further, we demand that the associated linearized del-bar operators are regular, so that the holomorphic disk moduli spaces (see Definition~\ref{defn. mu^d counting}) are smooth manifolds.
		\item\label{item. defining function holomorphic} (Coherent barriers) Finally, we demand that there exists some neighborhood of $\del h^*E$ such that, with respect to some global function $\pi: \nbhd(\del h^*E) \to \CC_{\mathrm{Re } \geq 0}$ as in Remark~\ref{remark. barrier in families}, $\pi$ is $(\JJ_{\vec L})|_{VTE}$-holomorphic.
	\end{floer-data}
\end{choice}

\begin{remark}
We may now further motivate the collaring choices made for simplices in Choice~\ref{choice. simplicial collars}. If one chooses the above $\Theta_{\vec L}$ without collaring conditions, there is no guarantee that
the $\Theta_{\vec L}$ glue smoothly along faces of a simplex.
\end{remark}

\begin{warning}\label{warning. transversality of branes}
The reader may be irked by an apparent incompatibility between \ref{item. floer data trivial} and \ref{item. floer data transverse}. As stated, it is impossible to satisfy both conditions unless the branes $L_0,\ldots,L_d$ are a priori assumed transversal. This is the reason for Choice~\ref{choice. ordering on objects}; see also Remark~\ref{remark. transversality choices}.
\end{warning}

\begin{remark}
Recall we have fixed a natural system (Choice~\ref{choice. natural system}).
We may pull back our choices $(\Theta_{\vec L},\JJ_{\vec L})$ along the map $\cS_r \subset \overline{\cS}_{d+1}^{\circ} \xra{\nu_{\beta}} |\Delta^d|$. Then by \ref{NS. strip condition u}, along the strip-like ends, all our choices are translation-invariant. (Here, translation is by $[0,\infty)$ or by $(-\infty,0]$ as parametrized by the strip-like end.)
\end{remark}

\begin{remark}
We have used the notion of pulling back $\JJ_{\vec L}$---a choice of almost-complex structure suitable for counting sections (Definition~\ref{defn. suitable for sections})---in articulating the conditions of Choice~\ref{choice. floer data}. We note that this pullback is defined by utilizing the natural systems from Choice~\ref{choice. natural system}.
\end{remark}

\begin{prop}\label{prop:choices-exist}
There  exist choices $\{(\Theta_{\vec L}, \JJ_{\vec L})\}_{\vec L}$  satisfying all the conditions in Choice~\ref{choice. floer data}.
\end{prop}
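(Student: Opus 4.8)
The plan is a single induction on $d$, along the standard lines for constructing coherent Floer data; the only genuinely non-routine point will be the consistency, on overlapping collars, of the data that condition~\ref{item. floer data smooth} forces upon us, which is exactly the situation Choice~\ref{choice. simplicial collars} was engineered to handle. First I would fix, once and for all and for every $b \in B$, a single admissible pair $(\Theta^b, \JJ^b)$ on the fiber $E_b$: a Liouville form exhibiting $E_b$ as a completion, together with a compatible conical-near-infinity almost-complex structure, chosen so that---after replacing each member of $\LL_b$ by a small Hamiltonian deformation so that its primitive may be taken compactly supported---simultaneously for \emph{every} finite tuple of branes of $E_b$ (pairwise transverse by Choice~\ref{choice. ordering on objects}) the relevant intersections are transverse, all linearized Cauchy--Riemann operators are regular, and the barrier map $\pi$ of Remark~\ref{remark. barrier in families} is holomorphic near $\partial E_b$. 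Such a pair exists by a Baire-category argument: $\LL_b$ is countable, transversality and regularity are each generic within the relevant contractible (hence connected) spaces of admissible data (Remark~\ref{remark. Theta space contractible}, and the remark following Definition~\ref{defn. suitable for sections}), and the barrier condition is open and may be imposed near $\partial E$ alone, since holomorphic sections whose boundary lies on branes supported away from $\partial E$ remain away from $\partial E$ (Remark~\ref{remark. barrier}). Then, for any constant $h : |\Delta^d| \to b$, condition~\ref{item. floer data trivial} forces $(\Theta_{\vec L},\JJ_{\vec L})$ to be the pullback of $(\Theta^b,\JJ^b)$, and I would check that this automatically satisfies \ref{item. floer data inductive} (pulling a constant datum back along a face inclusion returns the same constant datum), \ref{item. floer data smooth} (a collar retraction composed with a constant map is constant), and \ref{item. floer data transverse}--\ref{item. defining function holomorphic} by the choice of $(\Theta^b,\JJ^b)$.

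\emph{Inductive step: collar prescriptions.} Assume the data are constructed for all $d' < d$. Fix a non-constant $h : |\Delta^d| \to B$ and a tuple $\vec L$. Over the union $\cU = \bigcup_F U_F$ of the collars of the codimension-one faces, condition~\ref{item. floer data smooth} prescribes the data: over $U_F$ it must be $\pi_F^*$ of the already-chosen datum for the sub-tuple $\vec L' \subset \vec L$ cut out by $F$. The first thing to verify is that these prescriptions agree on overlaps $U_F \cap U_{F'}$, and here both properties of Choice~\ref{choice. simplicial collars} are used: \eqref{eqn. collars controlled} gives $U_F \cap F' \subset U_G$ for $G = F \cap F'$, and---using that the lower-dimensional data already satisfy \ref{item. floer data smooth} inside $F$ and $F'$, so that the retractions can be arranged to commute on overlaps---both prescriptions on $U_F \cap U_{F'}$ are the $\pi_G$-pullback of the datum on the sub-tuple cut out by $G$. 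Thus the prescriptions glue to a well-defined admissible datum on $\cU$; in particular the data are now pinned down near every vertex of $|\Delta^d|$, hence along every edge, so \ref{item. floer data trivial} and \ref{item. floer data inductive} hold there by induction.

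\emph{Inductive step: extension and perturbation.} It remains to extend the datum from (a neighborhood of) $\cU$ to all of $|\Delta^d|$, shrinking the collars if necessary, and to arrange \ref{item. floer data transverse}, \ref{item. floer data regular}, \ref{item. defining function holomorphic}. Since the space of admissible $\Theta$ is convex (Remark~\ref{remark. Theta space contractible}) and the space of admissible $\JJ$ is contractible, I would extend by a partition-of-unity / convex-interpolation argument while keeping $\pi$ holomorphic near $\partial E$ throughout, yielding \ref{item. defining function holomorphic}. Condition~\ref{item. floer data transverse} needs no further work for $d \geq 2$: each edge $[i,j]$ lies in $\bigcap_{[i,j] \subset F} U_F$, where the datum is already the pullback of the datum on $[i,j]$, and transversality of the parallel transport $\Pi_{ij}$ along that edge was imposed at stage $d' = 1$---where the connections over an interval form a convex space, constrained only near the two endpoints by \ref{item. floer data trivial}, hence freely perturbable in the interior to make $\Pi_{01}(L_0) \pitchfork L_1$ (this is how the $d=1$ stage itself is handled). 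Finally, \ref{item. floer data regular} is achieved by a standard Sard--Smale perturbation of $\JJ$ supported in the interior of $|\Delta^d|$, away from the collars and from $\partial E$; by the barrier principle the relevant holomorphic sections avoid $\partial E$, so these perturbations preserve \ref{item. defining function holomorphic}, and being supported off the region where the data are already fixed they preserve \ref{item. floer data trivial}--\ref{item. floer data smooth}.

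I expect the overlap-consistency of the collar prescriptions (the second step) to be the crux of the argument; everything else is a routine genericity or convexity argument. I would also note that the compatibility of the resulting moduli spaces near $\partial\overline{\cR}_{d+1}$ with the gluing of disks---which is what makes these data usable for the $A_\infty$ relations later---is automatic, being encoded through \ref{item. floer data inductive}, \ref{item. floer data smooth}, and the natural system of Choice~\ref{choice. natural system}, since a degenerating disk corresponds to a face of the simplex.
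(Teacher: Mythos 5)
Your proposal is correct and follows essentially the same route as the paper: an induction on $d$ whose extension step rests on the convexity/contractibility of the space of admissible $\Theta$ (and of $\JJ$), with transversality and regularity obtained by generic perturbation away from the collars (the paper's proof is a short paragraph citing Savelyev's Lemma~3.8 and Section~3.3 and highlighting exactly this convexity point). You supply considerably more detail than the paper---in particular the overlap-consistency of the collar prescriptions via~\eqref{eqn. collars controlled}, which the paper leaves implicit---and your parenthetical that the collar retractions ``can be arranged to commute on overlaps'' is the one place where you are silently strengthening Choice~\ref{choice. simplicial collars}, though this is standard and harmless.
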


\begin{proof}
This follows from a standard argument using induction; see for example Lemma~3.8 and Section~3.3 of~\cite{savelyev}. Perhaps the main point to note in our present work is how to choose the $\Theta_{\cL}$ compatibly. Given the bundle $h^*E \to |\Delta^d|$, the space of 1-forms $\Theta$ on $h^*E$ for which the fiberwise restrictions are Liouville forms is a smoothly contractible space (for example, it is easy to see that the space is convex). This contractibility is a necessary ingredient in the inductive step, as one must extend $\Theta$ from the boundary of an $n$-simplex to its interior.

When one must also account for brane structures (such as a trivialization of $\det^2_{\CC}(TM)$---see Remark~\ref{remark. liouville bundles different structure group}), the fact that the structure group has been reducted to $\Liouaut$ as opposed to $\Liouauto$ allows for this inductive step: By construction, the relevant trivialization from the boundary of an $n$-simplex extend to its interior.
\end{proof}

\subsection{A non-wrapped Fukaya category over a simplex}
Fix a smooth map $j: |\Delta_e^n| \to B$. (Note $|\Delta_e^n|$ is an extended simplex as in Notation~\ref{notation. extended simplices}.)

We define in this section the non-wrapped, directed Fukaya category $\cO_j$ associated to $j$.
The definition is inductive on $n$---we first define $\cO_j$ for all $j$ having domain of dimension $\leq n$, then for those $j$ with domain having dimension $n+1$.

\begin{remark}
The reader will note that $\cO_j$ only depends on the restriction of $j$ to the standard simplex
$|\Delta^n| \subset |\Delta_e^n|$. The reason we insist on the domain of $j$ being the extended simplex
$|\Delta_e^n|$ is to make use of homotopy-theoretic results concerning diffeological spaces;
the technical reasons for this will not arise prominently in this paper,
so we refer the reader to~\cite{oh-tanaka-smooth-approximation}.
\end{remark}

\begin{notation}[$b_i$ and $\LL_{b_i}$]
For every $0 \leq i \leq n$, let $b_i$ be the image of the $i$th vertex (Definition~\ref{defn. ith vertex}) of $|\Delta_e^n|$ under $j$.

Recall we have chosen a countable collection of branes and an order on these (Choice~\ref{choice. ordering on objects}). In particular, $\LL_{b_i}$ denote the countable collection of branes associated to $b_i$.
\end{notation}

\begin{defn}[Objects]\label{defn. obj of O}
An object of $\cO_j$ is a pair $(i,L)$ where
\begin{itemize}
\item $i \in \{0, \ldots, n\}$ and
\item $L \in \LL_{b_i}$.
\end{itemize}
To emphasize the role of the ordering $w$ we have chosen, we will often write an object as a triple
    \eqnn
    (i,L,w)
    \eqnd
where $w = w(L)$. (See Choice~\ref{choice. ordering on objects}.) We will also write this same object as
    \eqnn
    L_i^{(w_i)}
    \eqnd
from time to time.
\end{defn}

\begin{notation}[Parallel transport $\Pi$]\label{notation. parallel transport}
Fix a pair of objects $(L_0,i_0,w_0)$ and $(L_1,i_1,w_1)$. The integers $i_0$ and $i_1$ define a simplicial map $\beta: |\Delta^1| \to |\Delta^n| \subset |\Delta_e^n|$ sending the initial vertex of $|\Delta^1|$ to $i_0$ and the final vertex to $i_1$.

We let $h = j \circ \beta$. One also has an underlying ordered pair of branes $\vec L = (L_0, L_1)$. (The notation here is to be consistent with Notation~\ref{notation. vec L}.)

Because we have chosen $\Theta_{\vec L}$ for $h^* E$ (Choice~\ref{choice. floer data}), we have a parallel transport taking the initial fiber of $h^*E$ (i.e., the fiber above the initial vertex of $|\Delta^1|$) to the final fiber of $h^*E$.

We let $\Pi_{i_0, i_1}$ denote this parallel transport.
\end{notation}

We will render $\cO_j$ to be {\em directed} in the $w$ index; this means that the morphism complex from $(i, L, w)$ to $(i', L', w')$ will be zero unless $w < w'$, or $(i, L, w) = (i',L',w')$ (in which case the morphism complex is just the ground ring $R$ in degree 0). Concretely:

\begin{defn}[Morphisms]\label{defn. hom groups}
Fix two objects $(i_0, L_0, w_0)$ and $(i_1, L_1, w_1)$ of $\cO_j$.

We define the graded abelian group
	\eqnn
	\hom_{\cO_j}( (i_0, L_0, w_0) , (i_1, L_1, w_1))
	\eqnd
to be
	\eqnn
    \begin{cases}
    \bigoplus_{x \in \Pi_{i_0, i_1}(L_0^{(w_0)}) \cap L_1^{(w_1)}} \mathfrak{o}_{x}[-|x|]. & w_0 < w_1 \\
    R & (i_0, L_0, w_0) = (i_1, L_1, w_1) \\
    0 & \text{otherwise}.
    \end{cases}
    \eqnd
Here, $\Pi_{i_0,i_1}$ is the parallel transport map (Notation~\ref{notation. parallel transport}).
We also note that $\mathfrak{o}_{x}$ is the orientation $R$-module of rank one associated to the intersection point $x$, and $|x|$ is the Maslov index associated to the brane data.
\end{defn}

\begin{remark}
The set $x \in \Pi_{i_0, i_1}(L_0) \cap L_1$ is also in bijection with the set of flat sections of $h^*E \to |\Delta^1|$ (with respect to $\Theta_{(L_0,L_1)}$) beginning at $L_0^{(w_0)}$ and ending at $L_1^{(w_1)}$. (See Notation~\ref{notation. parallel transport}.)
\end{remark}

Now we define the operation $\mu^d$ for $d \geq 1$.

\begin{defn}[$\mu^d$ for the non-wrapped categories]\label{defn. mu^d counting}
As usual, fix a smooth map $j: |\Delta_e^n| \to B$.
For $d \geq 1$, fix a collection
	\eqnn
	\vec{L} = \{(i_0, L_0, w_0) , \ldots, (i_d, L_d, w_d)\}.
	\eqnd
We may assume $w_0 < \ldots < w_d$ by Definition~\ref{defn. hom groups} (otherwise $\mu^d$ is forced to be 0) .

Note that the integers $i_0, \ldots, i_d$ induce a simplicial map $\beta: |\Delta^d| \to |\Delta^n| \subset |\Delta_e^n|$ by sending the $a$th vertex of $|\Delta^d|$ to the $i_a$th vertex of $|\Delta^n|$. (This assignment, of course, need not be order-preserving.) Recall the map $\nu_{\beta}: \overline{\cS}_{d+1}^\circ \to |\Delta^n|$ as in Choice~\ref{choice. natural system}.

For a given collection of intersection points
	\eqnn
    x_a \in \Pi_{i_{a-1}, i_{a}}\left(L_{a-1}^{(w_{a-1})}\right) \cap L_{a}^{(w_{a})}
    \qquad
    (a = 1, \ldots, d)
	\eqnd
and
	\eqnn
    x_0 \in \Pi_{i_0, i_d}\left(L_0^{(w_{0})}\right) \cap L_d^{(w_{d})},
    \eqnd
we define
	\eqn\label{eqn. cM moduli of sections in O_j}
	\cM(x_d, \ldots, x_1; x_0)
	\eqnd
	to be the moduli space of holomorphic sections $u$
	\eqnn
		\xymatrix{
			& & & & E \ar[d] \\
		\cS_r \ar[r]_{\subset} \ar[urrrr]^{u} & \overline{\cS}^{\circ}_{d+1} \ar[r]_{\nu_\beta} & |\Delta^{d}| \ar[r]_-{\beta} & |\Delta^n| \subset |\Delta_e^n| \ar[r]_-j & B
		}
    \eqnd
satisfying the following boundary conditions:
	\enum
    \item Along the strip-line end near the $a$th puncture of $S$, $u$ converges to the parallel transport chord from $L_{a-1}^{(w_{a-1})}$ to $L_a^{(w_{a})}$ determined by $x_a$.
    \item Along the $a$th boundary arc of $S$, but outside the strip-like ends, $u$ is contained in the Lagrangian $L_a^{(w_a)} \subset E_{b_{i_a}}$. Note this makes sense due to the canonical trivialization of $E|_{\text{arc}} \cong E|_{b_{i_a}} \times \text{arc}$; this is a consequence of~\ref{NS. vertex condition u}.
    \enumd
As usual, the brane structures on the $L^{(w)}$ allow us to orient these moduli spaces, and predict their dimension based on the degrees of the $x_a$. We define
	\eqnn
    \mu^d(x_d, \ldots, x_1)
    =
    \sum_{x_0} \#{\cM(x_d,\ldots,x_1;x_0)} x_0
    \eqnd
where the number $\# \cM$ is counted with sign. In case our branes are not $\ZZ$-graded, we as usual declare the $x_0$ coefficient of $\mu^d$ to be zero when there is no zero-dimensional component of  $\cM(x_d,\ldots,x_1;x_0)$.
\end{defn}

\begin{remark}\label{remark. depends only on j o beta}
Given an ordered $(d+1)$-tuple of objects in $\cO_j$ with underlying branes $\vec L$, consider the induced map $\beta: |\Delta^d| \to |\Delta^n| \subset |\Delta_e^n|$. The $A_\infty$-operations are defined by moduli spaces depending only on $h = j \circ \beta$. (This follows from Definition~\ref{defn. mu^d counting} and ~\ref{item. floer data trivial}, \ref{item. floer data inductive}. Note that $h$ is the same $h$ as in Notation~\ref{notation. vec L}.)
\end{remark}

\begin{defn}\label{defn. cO_j}
Fix $j: |\Delta_e^n| \to B$. We let $\cO_j$ denote the $A_\infty$-category where
\begin{itemize}
	\item an object is the data of a brane $L^{(w)}$ in one of the vertex-fibers (as in Definition~\ref{defn. obj of O}),
    \item $\hom_{\cO_j}(L_0^{(w_0)}, L_1^{(w_1)})$ is as in Definition~\ref{defn. hom groups},
    \item The operations $\mu^d$ are as in Definition~\ref{defn. mu^d counting}.
\end{itemize}
\end{defn}

\begin{remark}
When a $\mu^d$ operation involves an element of an endomorphism hom-complex $\hom_{\cO_j}(L,L) = R$, the operation is fully determined by demanding that the unit of the ring $R$ be a strict unit of the $A_\infty$-category.
\end{remark}

\subsection{Proof of Theorem~\ref{theorem. O is Aoo}}

\begin{proof}
Because we have already set up the painstaking details, the theorem will be a standard consequence of (i) regularity, (ii) Gromov compactness for holomorphic sections, and (iii)  verifying that 1-dimensional moduli compactify in the usual way.

(i) Conditions~\ref{item. floer data transverse} and~\ref{item. floer data regular}
guarantee that our moduli are manifolds.

(ii) We established in Section~\ref{section. compactness  estimates} the estimates required for the standard Gromov compactness results for holomorphic sections in the Liouville setting.

(iii) Finally, we note that condition~\ref{item. floer data inductive}, together with
\ref{NS. star compatible} and \ref{NS. j functorial}, allow us to compactify
the $d$-ary moduli space using products of $d'$-ary moduli for $d' < d$.

In particular, thanks to the operadic comparability conditions spelled out
in \ref{NS. strip condition u} - \ref{NS. j functorial}, the usual Gromov-Floer compactification give rise to the following:

\begin{prop}
Let ${\bf x} = (x_d,\ldots,x_1)$ and consider the moduli space $\cM({\bf x};x_0)$ from \eqref{eqn. cM moduli of sections in O_j}.
 Then $\cM({\bf x};x_0)$ admits a compactification $\overline{\cM}({\bf x};x_0)$ whose boundary $\partial \overline{\cM}({\bf x};x_0) = \overline{\cM}({\bf x};x_0) \setminus \cM({\bf x};x_0)$ is naturally identified with the union
$$
\bigcup_{
\bar x \in L_{i+j}^{(w_{i+j})} \cap L_i^{(w_i)}} \overline{\cM}({\bf x}^1;x_0) \times
\overline{\cM}({\bf x}^2; \bar x).
$$
Here, $0 \leq i \leq d-j$ and
$$
{\bf x}^1=(x_d,\dots, x_{i+j+1}, \bar x, x_i,\dots, x_1),\quad
{\bf x}^2=(x_{i+j},\dots,x_{i+1}).
$$
\end{prop}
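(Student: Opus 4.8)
The approach is to recognize this as the standard Gromov--Floer compactness-and-gluing package, now applied to moduli of holomorphic \emph{sections} of a bundle whose base Riemann surface $\cS_r$ itself varies over $\overline{\cR}_{d+1}$. All of the genuinely analytic input has already been assembled in Section~\ref{section. compactness  estimates}: the $C^0$-estimate (Theorem~\ref{thm:C0estimate+CL}) confines the image of every section in $\cM({\bf x};x_0)$ to a fixed subset $A \subset E$, proper over the base and depending only on the asymptotic chords $x_a$; the energy identity (Proposition~\ref{vertE+f} together with~\eqref{eq:intv*Omega}) bounds the energy of any such section over compact subdomains in terms of the actions of the $x_a$; and condition~\ref{item. floer data regular} makes the linearized operators surjective, so that $\cM({\bf x};x_0)$ is a smooth manifold of the expected dimension. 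Since each fiber $E_b$ is exact and each brane carries a compactly supported primitive $f_i$ (Choice~\ref{choice. floer data}), there is no bubbling of holomorphic spheres or holomorphic disks into the fibers; hence the only sources of non-compactness are degeneration of the base disk $\cS_r$ as $r \to \del \overline{\cR}_{d+1}$ (Floer strip breaking being the special case in which one resulting component is a $2$-punctured strip).

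\textbf{Compactness.} Given a sequence $(v_k)$ in $\cM({\bf x};x_0)$ with underlying points $r_k \in \cR_{d+1}$, pass to a subsequence so that $r_k \to r_\infty \in \overline{\cR}_{d+1}$. If $r_\infty$ is interior, standard Floer compactness (using the uniform $C^0$ and energy bounds above, and exactness to exclude fiber bubbling) produces a limit in $\cM({\bf x};x_0)$ or a configuration with a broken trajectory at one puncture. If $r_\infty \in \del \overline{\cR}_{d+1}$, the disk $\cS_{r_k}$ degenerates, across the relevant codimension-one stratum, to a nodal disk $\cS_{r_2} \circ_i \cS_{r_1}$; using the thick--thin decomposition near the forming node together with the uniform estimates, $v_k$ converges to a pair of holomorphic sections $v_2$ over $\cS_{r_2}$ and $v_1$ over $\cS_{r_1}$, asymptotic at the node to a common parallel transport chord $\bar x \in L_{i+j}^{(w_{i+j})} \cap L_i^{(w_i)}$. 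The key point is that these limits solve the \emph{correct} equations: by~\ref{NS. star compatible} the map $\nu_\beta$ near this stratum is $\nu_{\beta_2} \star_i \nu_{\beta_1}$, which on the neck is the projection to $|\Delta^1|$ followed by the edge inclusion, while by~\ref{item. floer data inductive} and~\ref{NS. j functorial} the data $(\Theta_{\vec L}, \JJ_{\vec L})$ restrict on the two components to exactly the Floer data associated to the sub-tuples ${\bf x}^1$ and ${\bf x}^2$. Thus $(v_1,v_2) \in \overline{\cM}({\bf x}^1;x_0) \times \overline{\cM}({\bf x}^2;\bar x)$, and applying the energy identity~\eqref{eq:intv*Omega} with $\bar x$ inserted as an asymptotic datum shows no energy is lost to the neck, so no further bubbling occurs.

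\textbf{Gluing.} Conversely, fix $\bar x$ and a regular pair $(v_1,v_2) \in \overline{\cM}({\bf x}^1;x_0) \times \overline{\cM}({\bf x}^2;\bar x)$ matching at the $i$-th puncture. The standard pregluing plus Newton-iteration construction for holomorphic sections with strip-like ends, carried out along the corner parametrization~\eqref{eqn. S-corner parametrization} of $\overline{\cS}_{d+1}^\circ$, produces for every sufficiently small gluing parameter $\tau$ a unique nearby holomorphic section $v_\tau \in \cM({\bf x};x_0)$ with $v_\tau \to (v_1,v_2)$ as $\tau \to 0$; surjectivity of the linearized operators~\ref{item. floer data regular} makes this a genuine collar of the boundary, and compatibility of the gluing with the maps $\nu_\beta$ is guaranteed by~\ref{NS. star compatible}. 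Running the compactness and gluing statements over every boundary stratum appearing in the statement (indexed by $1 \leq j \leq d$ and $0 \leq i \leq d-j$) equips $\overline{\cM}({\bf x};x_0)$ with the structure of a compact manifold with boundary whose boundary is the asserted union.

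\textbf{The main obstacle.} The delicate step is the gluing/surjectivity half: one must show the boundary of the compactification is \emph{exactly} the claimed union --- that every broken configuration is glueable and that distinct configurations glue to distinct ends. This rests on (i) the absence of energy concentration in the neck, which follows from exactness via~\eqref{eq:intv*Omega}; (ii) a uniform $C^0$-bound preventing the section from escaping toward fiberwise infinity as the node forms, which is precisely Theorem~\ref{thm:C0estimate+CL} with $A$ depending only on the $x_a$; and (iii) the quadratic estimate and right-inverse bound needed for the Newton iteration, which is standard but carries the analytic weight (cf.\ Section~2.4 of~\cite{seidel-long-exact-sequence}). Everything else is bookkeeping already prepared in Sections~\ref{section. compactness  estimates} and~\ref{section. unwrapped}.
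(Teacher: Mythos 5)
Your proposal is correct and follows essentially the same route as the paper, which justifies this proposition by exactly the ingredients you list: the $C^0$-estimate of Theorem~\ref{thm:C0estimate+CL} and the energy identity from Section~\ref{section. compactness  estimates}, regularity via~\ref{item. floer data transverse} and~\ref{item. floer data regular}, and the matching of limit equations on nodal components via~\ref{item. floer data inductive} together with~\ref{NS. star compatible} and~\ref{NS. j functorial}. The paper leaves the argument at the level of a sketch (citing the "usual Gromov--Floer compactification"), so your added detail on neck energy, exclusion of fiber bubbling by exactness, and the gluing/Newton-iteration half is a faithful elaboration rather than a different proof.
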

As usual, this description of $\partial \overline{\cM}({\bf x};x_0)$---applied to the case of $\dim \overline{\cM}({\bf x};x_0) = 1$---guarantees that the $A_\infty$ relations hold.

To finish the proof of Theorem~\ref{theorem. O is Aoo}, we must only prove that an injective simplicial map $\iota: |\Delta^n_e| \to |\Delta^{n'}_{e'}|$ induces a fully faithful functor $\cO_j \to \cO_{j'}$.

We define the functor to send an object $(i,L,w)$ to the object $(\iota(i), L, w)$. Because the morphism complexes from $(i,L,w)$ to $(i',L',w')$ depends only on the composite map
    \eqnn
    |\Delta^1| \to |\Delta^n| \to |\Delta^n_e| \to B
    \eqnd
(the first arrow  is the simplicial map sending the initial vertex of $|\Delta^1$ to $i$, and the terminal vertex to $i'$), that $\iota \circ j' = j$ means the morphism complexes admit a natural isomorphism
    \eqnn
    \hom_{\cO_j}( (i,L,w) , (i',L',w'))
    \xra{\cong}
    \hom_{\cO_{j'}}( (\iota(i),L,w), (\iota(i'),L',w')).
    \eqnd
Finally, Property~\ref{item. floer data inductive} guarantees that the moduli spaces of holomorphic sections defining the $A_\infty$ operations are also in natural bijection. (See Remark~\ref{remark. depends only on j o beta}.) Thus the functor $\cO_j \to \cO_{j'}$ is fully faithful.
\end{proof}

\begin{example}
Suppose $j: |\Delta_e^0| = |\Delta^0| \to B$ is the data of a point of $b \in B$. If the ordering function $w$ of Choice~\ref{choice. ordering on objects} is chosen to yield cofinal sequences of non-negative wrappings (see Remark~ref{remark. cofinal sequences})  $\cO_j$ is equivalent to the non-wrapped category $\cO$ that~\cite{gps} associates to the fiber $E_b$ above $b$. This is because the base case of $n=0$ in Choice~\ref{choice. floer data} implies that the boundary conditions for the holomorphic sections $u$ reduce to strip-like ends converging to intersection points $L_{a-1} \cap L_a$ in $M$. (When the pull-back bundle is canonically trivialized as $E_b \times |\Delta^d|$, sections are equivalent to maps $u: S \to E_b$.)
\end{example}

\section{Continuation maps}

Let $M$ be a Liouville manifold (or sector). If $L_0$ is a compact brane, any Hamiltonian isotopy from $L_0$ to $L_1$ induces an element in Floer cohomology $HF^*(L_0,L_1)$; this element is usually referred to as the continuation map, or sometimes the continuation element, associated to the isotopy.

Suppose $L_0$ is now a brane in a Liouville manifold (or sector). If $L_0$ is not compact and the Hamiltonian isotopy is not compactly supported, one must further impose the restriction that the isotopy be {\em non-negative} to construct the continuation map (Definition~\ref{defn. non negative wrapping}). Non-negativity yields the necessary $C^0$ and energy bounds to achieve Gromov compactness for moduli of disks (and continuation maps are constructed by counting holomorphic disks); see Theorem~\ref{thm:C0estimate+CL}.

Throughout, we fix a Liouville manifold (or sector) $M$ along with an exact Lagrangian isotopy of eventually conical branes
	\eqn\label{eqn. cL isotopy}
	\cL: [0,1] \times L_0 \to M
	\eqnd
in $M$. We assume $\cL$ is non-negative, and we review two constructions of continuation elements associated to $\cL$.

\subsection{Using once-punctured disks}
\label{sec:moving}

We assume that
\enum
	\item this isotopy is non-negative,
	\item For each $s$, the image of the time $s$ embedding $L_s$ is an (eventually conical) brane (Definition~\ref{defn. branes}), and
	\item $L_0$ is transverse to $L_1$.
\enumd

\begin{choice}[Choices for defining continuation map]\label{choice. chi_disk}
Choose a marked point $z_0 \in \del D^2$ and consider the Riemann surface with boundary $D^2 \setminus \{z_0\}$.
We equip $D^2 \setminus \{z_0\}$ with a strip-like end near $z_0$. Further, we choose a function
	\eqnn
	\chi: \del D^2 \to [0,1]
	\eqnd
such that $\chi$ is weakly increasing (with respect to the boundary orientation on $\del D^2$), is locally constant outside a compact set, and is onto.
\end{choice}

\begin{remark}\label{remark. u converges to constant path}
The non-negativity of the isotopy $\cL$ guarantees the usual $C^0$-estimates.\footnote{That is, the images of all the $v$ are contained in an a-priori-determined compact subset of $M$. See Section~\ref{section. compactness  estimates}.} Thus the finite energy condition implies that as $z \to z_0$, the map $u$ converges exponentially (with respect to the strip-like coordinates near $z_0$) to a constant path supported at an intersection point
$x \in L_0 \cap L_1$.
\end{remark}

\begin{notation}
Remark~\ref{remark. u converges to constant path} enables us to define the evaluation map
    $$
    \ev_{z_0}: \cM(D^2 \setminus \{z_0\} ; \cL^\chi) \to L_0 \cap L_1.
    $$
We denote
	\eqn\label{eq:cMDz0Lx}
    \cM(D^2 \setminus \{z_0\} ; \cL^\chi,x):= \ev_{z_0}^{-1}(x)
	\eqnd
for each $x \in L_0 \cap L_1$.
\end{notation}

\begin{prop}[Theorem C.3.1 \cite{oh:book2}]\label{prop. dimension of continuation moduli}
For a generic choice of isotopy $\cL$, $\cM(D^2 \setminus \{z_0\} ; \cL^\chi, x)$
is a smooth manifold of dimension given by
	$$
\dim \cM(D^2 \setminus \{z_0\} ; \cL^\chi, x) = \frac{n}{2} - \mu_{\cL}(x)
	$$
where $\mu_{\cL}(x)$ is the Maslov index of $x$ relative to $\cL$.
\end{prop}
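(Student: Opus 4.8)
The plan is to realize $\cM(D^2 \setminus \{z_0\} ; \cL^\chi, x)$ as the zero locus of a Fredholm section and then run the standard Sard--Smale argument, with the isotopy $\cL$ playing the role of the perturbation parameter. First I would fix $p > 2$ and a small exponential weight $\delta > 0$ at the puncture $z_0$, and introduce the Banach manifold $\cB$ of $W^{1,p}_\delta$-maps $v: D^2 \setminus \{z_0\} \to M$ satisfying the moving boundary condition $v(z) \in L_{\chi(z)}$ and converging to the constant path at $x$ along the strip-like end near $z_0$. This asymptotic behavior is legitimate by Remark~\ref{remark. u converges to constant path}, which itself rests on the $C^0$-estimate of Theorem~\ref{thm:C0estimate+CL} applied to the once-punctured disk. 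Over $\cB$ sits the Banach bundle $\cE \to \cB$ with fiber $W^{0,p}_\delta(v^*TM \otimes \Lambda^{0,1})$, and $\delbar_J$ is a smooth section whose zero locus is the moduli space; I would also let $J$ (a domain-dependent compatible almost complex structure, conical near infinity) and the isotopy $\cL$ vary, forming a universal section over $\cB \times \mathcal{I}$, where $\mathcal{I}$ is the space of non-negative isotopies through branes from $L_0$ to $L_1$.

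Second comes the Fredholm property and the index computation. Since $x \in L_0 \cap L_1$ is transverse, hence nondegenerate, the asymptotic operator at $z_0$ is invertible, so for small $\delta$ the linearization $D_v$ of $\delbar_J$ is Fredholm between the weighted spaces. Its index is computed by the Riemann--Roch formula for a Cauchy--Riemann operator on a disk with one boundary puncture whose totally real boundary condition is the path of tangent spaces $T L_{\chi(z)}$ along $\del D^2$; concatenating this path with the capping path determined by $\cL$ at $x$ produces exactly the relative Maslov index $\mu_{\cL}(x)$, while the once-punctured-disk (rather than strip) geometry contributes the additional $\tfrac n2$ term, yielding $\operatorname{ind} D_v = \tfrac n2 - \mu_{\cL}(x)$. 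This is the content of Theorem C.3.1 of~\cite{oh:book2}; I would recall that computation and simply verify that the grading and Maslov-index conventions there agree with ours.

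Third is transversality. I would show the universal linearized operator $D_v \oplus (\text{variation in } \cL, J)$ is surjective at every zero: a nonzero cokernel element $\eta$ would, by elliptic regularity and the usual $L^2$-pairing, have to annihilate all first-order variations coming from perturbations of $\cL$ supported in the compact region where $v$ has image — a fixed compact subset of $M$ by Theorem~\ref{thm:C0estimate+CL}, so that such perturbations do not disturb non-negativity or the brane condition at infinity. Since every solution is non-constant (no constant map can meet all of the moving $L_{\chi(z)}$) and meets the genuinely moving part of the boundary, the standard somewhere-injectivity plus unique continuation argument forces $\eta \equiv 0$, a contradiction. Hence the universal moduli space is a smooth Banach manifold, the projection to $\mathcal{I}$ is Fredholm of index $\tfrac n2 - \mu_{\cL}(x)$, and Sard--Smale produces a comeager set of $\cL$ for which $\cM(D^2 \setminus \{z_0\} ; \cL^\chi, x)$ is a smooth manifold of the asserted dimension.

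The main obstacle I anticipate is this transversality step: because one must perturb the \emph{geometric} isotopy $\cL$ rather than an auxiliary abstract perturbation datum, one has to confine the perturbations to the compact region where the curves live (invoking the $C^0$-estimate) in order to preserve both non-negativity at infinity and the requirement that the isotopy stay through branes, and then argue that on that region the moving boundary condition still supplies enough infinitesimal freedom to kill any cokernel. Everything else — the Banach setup, the Fredholm property from nondegeneracy of $x$, and the index bookkeeping — is routine and essentially packaged in~\cite{oh:book2}.
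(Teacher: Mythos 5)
The paper offers no proof of this proposition---it is quoted as Theorem C.3.1 of \cite{oh:book2}---and your proposal reconstructs exactly the standard Fredholm/Sard--Smale argument that the citation packages: weighted Sobolev setup at the nondegenerate asymptotic $x$, Riemann--Roch index count giving $\tfrac{n}{2} - \mu_{\cL}(x)$, and transversality by perturbing $\cL$ on the compact region supplied by the $C^0$-estimate. The only point I would tighten is your parenthetical claim that no solution can be constant: a constant map at $x$ is a solution whenever $x \in L_s$ for all $s$, which is excluded only for \emph{generic} $\cL$ (or must be handled separately), so it belongs on the genericity side of the argument rather than being automatic.
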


\begin{remark}
The dimension count of Proposition~\ref{prop. dimension of continuation moduli} is compatible with the grading on $CF^*(L_0,L_1)$, in the sense that
	\eqn\label{eq:degreex}
|x| = \dim \cM(D^2 \setminus \{z_0\} ; \cL^\chi) = \frac{n}{2} - \mu_{\cL}(x).
	\eqnd
\end{remark}

\begin{remark} This definition of the \emph{cohomological} degree is
adopted because we put the output at $-\infty$ in the definition of
the Floer moduli space. Another choice would be to take $|x|$ to be
the codimension instead of the dimension of the relevant moduli space if the output were
put at $\infty$.
\end{remark}

\begin{construction}\label{construction. continuation cochain}
We define a Floer cochain
	\eqn\label{eq:c(cL)}
c^{\chi}_{\cL}: = \sum_{x \in L_0 \cap L_1; |x| = 0} n_\cL^\chi(x) \langle x \rangle
	\eqnd
where $n_\cL^\chi(x) = \#(\cM(D^2 \setminus \{z_0\} ; \cL^\chi))$ (counted with sign as usual using orientations).
\end{construction}

\begin{prop}\label{prop. continuation class using disks}
The cochain $c^{\chi}_{\cL}$ is a cocycle. Moreover, its Floer cohomology class $[c^{\chi}_{\cL}] \in HF^0(L_0,L_1)$
is independent of the choice of $\chi$.
\end{prop}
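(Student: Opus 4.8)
The plan is to run the standard one-dimensional moduli space argument, with the extra care required by the non-compactness of $M$. First I would establish that $c^\chi_\cL$ is a cocycle. By Proposition~\ref{prop. dimension of continuation moduli}, the moduli spaces $\cM(D^2\setminus\{z_0\};\cL^\chi,x)$ with $|x|=1$ are smooth one-manifolds; the boundary of their Gromov--Floer compactification consists of configurations where a Floer strip (for the pair $(L_0,L_1)$, since the isotopy is already constant near $z_0$) breaks off at $z_0$. The $C^0$-estimate coming from non-negativity of $\cL$ (Theorem~\ref{thm:C0estimate+CL}, specialized to $\Sigma = D^2\setminus\{z_0\}$ with $B$ a point) prevents escape to infinity, and the energy estimate (the discussion culminating in Proposition~\ref{vertE+f}, again specialized) prevents energy concentration elsewhere, so these broken configurations exhaust the boundary. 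Counting signed boundary points of these one-manifolds yields $\mu^1 c^\chi_\cL = 0$, i.e.\ $c^\chi_\cL$ is closed.

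Next I would prove independence of $\chi$. Given two admissible choices $\chi_0$ and $\chi_1$ (weakly increasing, locally constant at infinity, onto), I would choose a homotopy $\{\chi_s\}_{s\in[0,1]}$ through admissible functions — the space of such $\chi$ is contractible, so this exists — and form the parametrized moduli space $\bigcup_{s\in[0,1]}\cM(D^2\setminus\{z_0\};\cL^{\chi_s},x)$ over $x$ with $|x|=0$. For generic data this is a one-manifold with boundary. Its boundary has two types of contributions: the $s=0$ and $s=1$ endpoints, which produce $c^{\chi_0}_\cL$ and $c^{\chi_1}_\cL$ respectively, and strip-breaking at $z_0$, which produces a term of the form $\mu^1(\text{something})$. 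The same $C^0$- and energy-estimates as above, now uniform in the compact parameter $s\in[0,1]$ (the relevant bounds depend only on the endpoints $L_0,L_1$ of the isotopy and not on $\chi$), guarantee compactness of this parametrized space. Reading off the signed boundary count gives $c^{\chi_1}_\cL - c^{\chi_0}_\cL = \mu^1 K$ for a degree $-1$ cochain $K$, hence $[c^{\chi_0}_\cL]=[c^{\chi_1}_\cL]$ in $HF^0(L_0,L_1)$.

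The main obstacle is not the formal homological bookkeeping but the uniformity of the analytic estimates across the one-parameter family. One must check that the elongation-function data $\chi_s$ can be chosen so that all the hypotheses of Theorem~\ref{thm:C0estimate+CL} hold simultaneously for all $s$: in particular that the moving boundary condition remains non-negative at infinity throughout the homotopy (this is automatic, since non-negativity of $\cL$ is a condition on the isotopy $\{L_t\}$ and is unaffected by reparametrizing via $\chi_s$) and that the a priori $C^0$-bound on the image of $v$ can be taken independent of $s$ (which follows because the parallel transport chords — here constant paths at intersection points $x\in L_0\cap L_1$ — to which $v$ converges at $z_0$ do not depend on $\chi_s$, so the set $A$ in Theorem~\ref{thm:C0estimate+CL} is $s$-independent). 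Granting these, Gromov--Floer compactness applies uniformly and the argument closes. A minor additional point is transversality: one needs the parametrized moduli spaces to be cut out transversally for generic $\{\chi_s\}$ (or for generic auxiliary almost-complex structure data), which is the usual Sard--Smale argument and is already invoked in Proposition~\ref{prop. dimension of continuation moduli}.
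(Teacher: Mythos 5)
Your proposal is correct and follows essentially the same route as the paper: the cocycle property comes from identifying $\langle\mu^1 c^\chi_\cL,y\rangle$ with the signed boundary count of the compactified one-dimensional moduli $\overline{\cM}(D^2\setminus\{z_0\};\cL^\chi,y)$, and independence of $\chi$ is the standard parametrized-moduli (compactness-cobordism) argument using contractibility of the space of elongation functions. The only difference is that you spell out the uniform $C^0$/energy estimates and the chain-homotopy bookkeeping that the paper leaves as ``standard.''
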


\begin{proof}[Proof of Proposition~\ref{prop. continuation class using disks}.]
We compute the matrix coefficient of the Floer coboundary $\mu^1 (c^{\chi}_{\cL})$. For each given $y \in L_0 \cap L_1$ with $|y| = 1$, we compute
its coefficient in the linear expression of $\mu^1 (c^{\chi}_{\cL})$
\begin{eqnarray*}
\langle \mu^1 (c^{\chi}_{\cL}), y \rangle & = &
\sum_{x \in L_0 \cap L_1}n_\cL^\chi(x) \langle \mu^1(\langle x \rangle),\langle y \rangle \rangle\\
& = & \sum_{x \in L_0 \cap L_1}\# \cM(y,x)\,  n_\cL^\chi(x).
\end{eqnarray*}

Here---by the standard compactness-and-gluing theorems---the last sum is nothing but the count of the boundary elements
of compact one-dimensional manifold $\overline \cM(D^2 \setminus \{z_0\};\cL,y)$; it hence vanishes.
This proves $\langle \mu^1 (c^{\chi}_{\cL}), y \rangle = 0$ for all $y$ with $|y| = 1$ and
so $\mu^1(c^{\chi}_{\cL}) = 0$. Therefore $c^{\chi}_{\cL}$ defines a
Floer cohomology class in $HF^0(L_0,L_1)$.

Now the standard compactness-cobordism argument proves the second statement
noting that the space of elongation functions $\chi$ is contractible (and in particular, connected).
\end{proof}

\begin{defn}[The continuation element]\label{defn. continuation element}
Let $\cL: [0,1] \times L_0 \to M$ be a non-negative, exact Lagrangian isotopy from $L_0$ to $L_1$.
We denote by
	\eqnn
	c_{\cL} \in HF^0(L_0,L_1)
	\eqnd
the cohomology class associated to the cochain in Construction~\ref{construction. continuation cochain}. We call it the {\em continuation element} associated to the isotopy.
\end{defn}

\subsection{Using strips}

\begin{choice}[$\rho$]\label{choice. elongation rho}
We fix an elongation function $\rho:\RR \to [0,1]$ given by
\begin{eqnarray}\label{eq:rho}
\rho(\tau) &= & \begin{cases} 1 \quad & \text{for } \, \tau \geq 1 \\
0 \quad & \text{for } \, \tau \leq 0
\end{cases}
\nonumber \\
\rho'(\tau) & > & 0 \quad \text{for } \, 0 < \tau < 1.
\end{eqnarray}
\end{choice}

\begin{remark}\label{remark. elongation function space is contractible}
For our purposes, any weakly monotone $\rho$ with value 0 near $-\infty$ and 1 near $\infty$ will suffice; we note that the space of such $\rho$ is contractible.
\end{remark}

\begin{notation}\label{notation. L^rho}
Given an exact Lagrangian isotopy $\cL$ and an elongation function $\rho$ as in Choice~\ref{choice. elongation rho}, we denote by
	$$
\cL^\rho: \tau \mapsto L_{\rho(\tau)}.
	$$
the induced $\RR$-parametrized isotopy.
\end{notation}

\begin{choice}\label{choice. J_(s,t)}
We also choose a smooth, 2-parameter family of eventually conical almost-complex structures on $M$ (Definition~\ref{defn. J conical near infinity})
	\eqnn
	[0,1] \times [0,1] \to \{
	\text{Eventually conical $J$}
	\},
	\qquad
	(s,t) \mapsto J_{(s,t)}.
	\eqnd
\end{choice}

\begin{construction}[Construction using strips]\label{construction. continuation strip}
Fix an exact Lagrangian isotopy $\cL$ and a brane $K$ such that $K \pitchfork L_i$ for $i = 0, \, 1$. The Floer continuation map
    \eqnn\label{eq:hCL}
    h_\cL^\rho:CF(K,L_0)\rightarrow CF(K,L_1)
    \eqnd
is defined by counting isolated solutions of the following system:
    \eqn\label{eq:moving}
        \begin{cases}
        {\frac {\partial u}{\partial\tau}} + J_{(\rho(\tau),t)}
        {\frac {\partial u}{\partial t}} =0 \\
        u(\tau ,0)\in K,\;\; u(\tau ,1)\in L_{\rho(1-\tau)}.
        \end{cases}
    \eqnd
See Figure~\ref{figure. continuation strip}.
\end{construction}

A similar argument to the proof Proposition~\ref{prop. continuation class using disks} shows the following:

\begin{prop}\label{prop. continuation class using strips}
$h_{\cL}^{\rho}$ is a chain map. Moreover, the map on cohomology
	\eqnn
	[h_{\cL}] : HF(K,L_0) \to HF(K,L_1)
	\eqnd
is independent of the choice of elongation function $\rho$.
\end{prop}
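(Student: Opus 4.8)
The plan is to run, for both assertions, the standard argument identifying the signed count of boundary points of a compact one-dimensional moduli space with zero; this is the same pattern used in the proof of Proposition~\ref{prop. continuation class using disks}, with strips replacing once-punctured disks. The only genuinely analytic inputs are the uniform $C^0$-estimate and the energy bound needed for Gromov--Floer compactness, and both are supplied by the single-fiber specialization of Theorem~\ref{thm:C0estimate+CL}: non-negativity of $\cL$, together with weak monotonicity of $\rho$, means that the moving boundary condition $u(\tau,1)\in L_{\rho(1-\tau)}$ is swept in the non-negative direction, so the maximum principle applies exactly as in Section~\ref{section. compactness  estimates}.

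\textbf{Chain map property.} For $p\in K\cap L_0$ and $q\in K\cap L_1$, let $\cM^\rho(p;q)$ be the moduli space of solutions $u$ to \eqref{eq:moving} with $u\to p$ as $\tau\to+\infty$ and $u\to q$ as $\tau\to-\infty$; for a generic choice of $\{J_{(s,t)}\}$ (in particular one whose $s=1$ and $s=0$ slices agree with the Floer data used for $CF(K,L_0)$ and $CF(K,L_1)$), this is a smooth manifold of the expected dimension, and $h_\cL^\rho$ counts its rigid points. When $\cM^\rho(p;q)$ is one-dimensional, Gromov--Floer compactness produces a compact one-manifold-with-boundary $\overline{\cM}^\rho(p;q)$; because the boundary condition is the constant family $L_0$ for $\tau\gg 0$ and the constant family $L_1$ for $\tau\ll 0$, the usual gluing analysis identifies $\partial\overline{\cM}^\rho(p;q)$ as the union of the once-broken configurations (rigid Floer trajectory for $CF(K,L_0)$, then rigid continuation strip) and (rigid continuation strip, then rigid Floer trajectory for $CF(K,L_1)$). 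Counting these with sign yields $\mu^1\circ h_\cL^\rho = h_\cL^\rho\circ\mu^1$ up to the standard sign, i.e. $h_\cL^\rho$ is a chain map.

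\textbf{Independence of $\rho$.} Given two elongation functions $\rho_0,\rho_1$ as in Choice~\ref{choice. elongation rho}, the key point is that the space of weakly monotone elongation functions with value $0$ near $-\infty$ and $1$ near $+\infty$ is \emph{convex}: the straight-line path $\rho_\lambda=(1-\lambda)\rho_0+\lambda\rho_1$ again consists of weakly monotone functions with these asymptotics (cf. Remark~\ref{remark. elongation function space is contractible}), so in particular each $\rho_\lambda$ still meets the hypotheses of Theorem~\ref{thm:C0estimate+CL}. I would then form the parametrized moduli space $\cM^{\{\rho_\lambda\}}(p;q)=\{(\lambda,u):\lambda\in[0,1],\ u\in\cM^{\rho_\lambda}(p;q)\}$; after a generic perturbation of the $\lambda$-family of almost-complex structures this is a smooth manifold of dimension one more than the expected index, and, by the previous paragraph's $C^0$-bound (uniform in $\lambda$), it is compact in the Gromov--Floer sense. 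Counting its zero-dimensional stratum defines a degree $-1$ map $H:CF(K,L_0)\to CF(K,L_1)$. The boundary of the one-dimensional stratum consists of the fibers over $\lambda=0$ and $\lambda=1$ (contributing $h_\cL^{\rho_0}$ and $h_\cL^{\rho_1}$) together with the once-broken configurations at $\tau=\pm\infty$ (contributing $\mu^1\circ H$ and $H\circ\mu^1$); hence $h_\cL^{\rho_1}-h_\cL^{\rho_0}=\mu^1\circ H\pm H\circ\mu^1$, so $[h_\cL^{\rho_0}]=[h_\cL^{\rho_1}]$ in cohomology.

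\textbf{Main obstacle.} The difficulty is not conceptual but bookkeeping. One must arrange transversality for the moving-boundary and parametrized moving-boundary problems \emph{simultaneously} with the already-fixed Floer data computing $\mu^1$ on $CF(K,L_0)$ and $CF(K,L_1)$, so that the broken strata above are cut out transversally and contribute exactly $\mu^1 h$, $h\mu^1$, $\mu^1 H$, $H\mu^1$; and one must keep track of orientations and signs in the boundary identifications. One should also note that it is precisely the convexity of the class of \emph{monotone} $\rho_\lambda$ that keeps the $C^0$-estimate of Theorem~\ref{thm:C0estimate+CL} available throughout the homotopy---a homotopy through non-monotone functions would demand a separate compactness argument. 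All of this is routine, and the analytic heavy lifting has already been carried out in Section~\ref{section. compactness  estimates}.
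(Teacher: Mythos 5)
Your argument is correct and is essentially the proof the paper intends: the paper itself only says the proposition follows by ``a similar argument to the proof of Proposition~\ref{prop. continuation class using disks}'', namely identifying $\mu^1\circ h_\cL^\rho - h_\cL^\rho\circ\mu^1$ with the signed boundary count of a compact one-dimensional moduli space, and deducing $\rho$-independence from a compactness-cobordism argument over the (convex, hence connected) space of elongation functions as in Remark~\ref{remark. elongation function space is contractible}. Your write-up just makes the broken-strata bookkeeping and the parametrized moduli space explicit, which matches the intended route.
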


\begin{remark}\label{remark. chi rho compatibility}
Recall that in our constructions of the continuation elements and the Floer continuation map
in the previous Subsections, we have used two different kind of elongation functions, $\chi$ and $\rho$ (Choices~\ref{choice. chi_disk} and~\ref{choice. elongation rho})
where the domain of $\chi$ is $\del D^2 \setminus \{z_0\}$ and the domain of $\rho$ is $\RR$.
Different choices of such functions define the same map in cohomology.
However, when we study compatibility between the strip and disk definitions of continuation maps,
which requires us to examine a family of moduli spaces, we will need to exhibit a compatibility between $\chi$ and $\rho$. We will again have some freedom in exhibiting this compatibility; see~\eqref{eqn.chi-rho-compatibility}.
\end{remark}

\subsection{Recollections on \texorpdfstring{$\overline{\cM_4}$}{M4}}

We start our proof with considering
the configuration space $\cM_4$ of four boundary marked points of the unit disc modulo the action of $PSL(2,\RR)$.
We denote an element thereof by an equivalence class $[{\bf z}]$ of the tuple
	$$
{\bf z} = (z_0,z_1,z_2,z_3).
	$$
It is easy to see that $\cM_4$ is diffeomorphic to the open unit interval and its canonical
compactification by stable curves, denoted by $\overline{\cM}_4$, is
obtained by adding two points on the boundary of the open interval. Each of these two points represents a singular disc with two irreducible components.

More specifically,
modulo the action of $PSL(2;\RR)$, we may assume $z_0 = - 1 \in \partial D^2$. Then we consider
a diffeomorphism $\mathfrak{r}: \cM_4 \to \RR_{> 0}$ given by the (real) cross ratio,
	\eqn\label{eq:crossratio}
\mathfrak{r}([1,z_1,z_2,z_3]) = \frac{w_2-w_3}{w_1-w_2}; \quad w_i = \log z_i
	\eqnd
where $w_i \in \del \HH \subset \CC$.
(Here we take the logarithm $w_i = \log z_i$ with respect to the branch cut along the positive real axis.)

\begin{notation}[$\varphi_r$]\label{notation.varphi_r}
For later use, for each $r \in (0,\infty)$ we denote by
	\eqnn
	\varphi_r: D^2 \setminus \{z_0,z_1\} \to \RR \times [0,1]
	\eqnd
the unique conformal map satisfying

	\eqn\label{eq:varphir}
\begin{cases}
	\varphi_r(z_0) = -\infty, \\
	\varphi_r(z_1) = \infty, \\
	\varphi_r(z_2) = (r,1),\\
	\varphi_r(z_3) = (-r,1)
\end{cases}
	\eqnd
for $r = \mathfrak{r}([z_0,z_1,z_2,z_3])$.
\end{notation}

This realization on $\RR \times [0,1]$ (of the unit disk's boundary points as prescribed by elements of $\cM_4$) will be important in the study of the continuation equation and its relationship with the
continuation element.

The two boundary points of $\overline{\cM}_4$ represent singular curves of the types
    \begin{eqnarray}\label{eq:r=infty}
    {\mathfrak{r}}^{-1}(0) & = & (D^2, (z_0,z_1,\zeta)) \# (D^2,(\zeta,z_2,z_3)), \nonumber \\
    {\mathfrak{r}}^{-1}(\infty)&  = & (D^2, (z_0,\zeta,z_3)  \# (D^2,(\zeta,z_1,z_2))).
    \end{eqnarray}
Under the above diffeomorphism $\mathfrak{r}: \overline \cM_4 \to [0, \infty]$,
the unique conformal map $\varphi_r: D^2 \setminus \{z_0,z_1\} \to \RR \times [0,1]$ respects
degenerations of the domain and of the target and we can express the limit
of the sequence $[z_0,z_1,z_2,z_3]$ in $\overline \cM^4$ at $\mathfrak{r} = \infty$ as a join of
the morphisms between two stable curves
\begin{eqnarray}
\psi: (D^2, (z_0,z_1,\zeta)) & \to & (\RR \times [0,1], \{(0,1)\}), \label{eq:psi}\\
\varphi: (D^2,\zeta) & \to & \Theta_- \label{eq:varphi}
\end{eqnarray}
where $\psi$ and $\varphi$ are uniquely defined by the symmetry \eqref{eq:varphir}
condition imposed on $\varphi_r$. We describe $\Theta_-$ in Notation~\ref{notation. theta minus} below.

\begin{figure}[ht]
    \eqnn
			\xy
			\xyimport(8,8)(0,0){\includegraphics[width=2in]{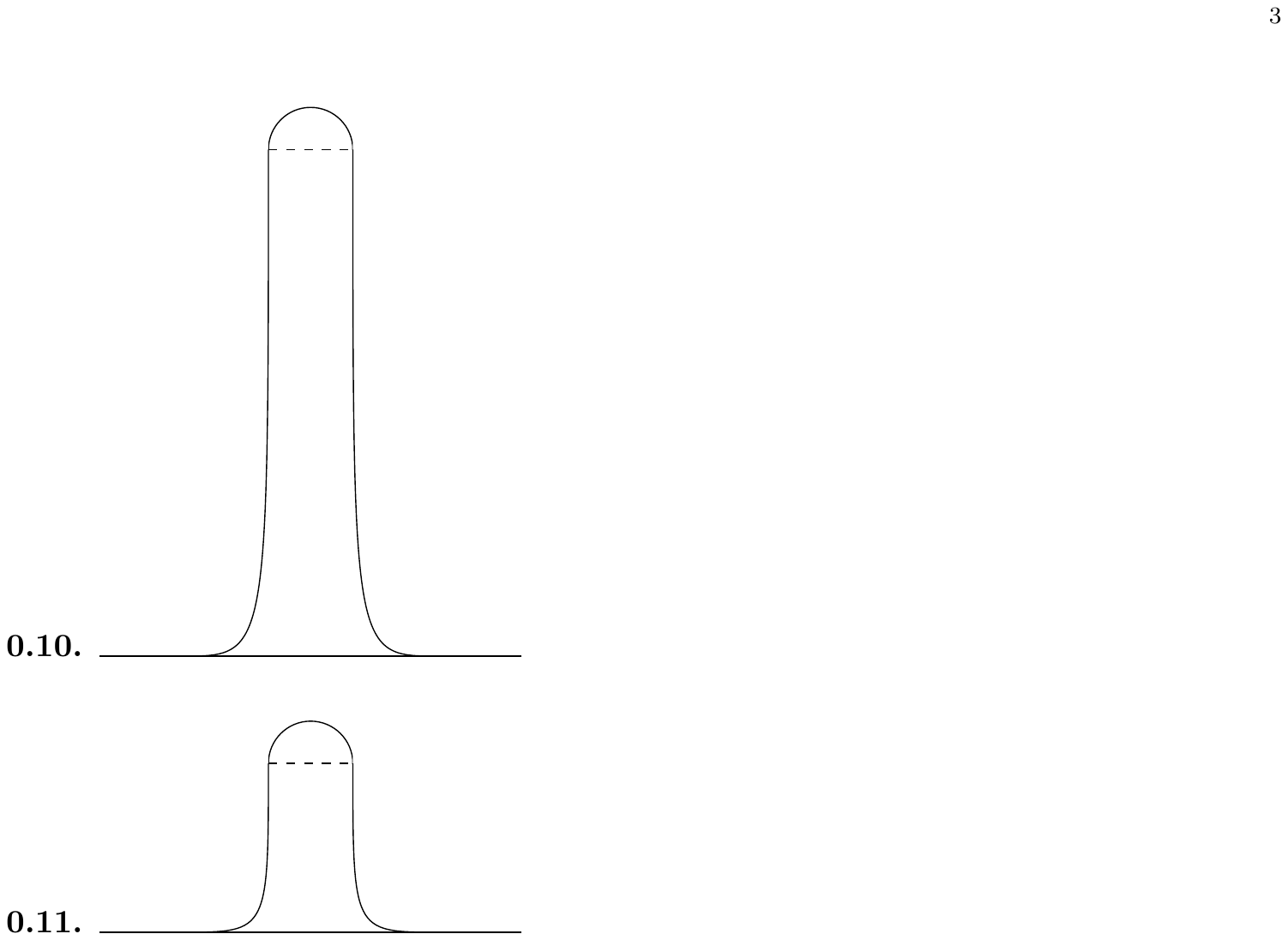}}
					,(4,8)*{\cL}
					,(4,5.8)*{y}
					,(5.8,3.4)*{L_0}
					,(2.3,3.4)*{L_1}
					,(7,0.8)*{x}
					,(1.1,0.8)*{x'}
					,(4,0.2)*{X}
			\endxy
    \eqnd
    \begin{figurelabel}\label{figure.disk-strip-glued}
    The glued image for the composition of $\mu^2(c_{\cL},-)$.
    \end{figurelabel}
\end{figure}

\begin{notation}[$\Theta_-$]\label{notation. theta minus}
We denote by $\Theta_-$ the domain (equipped with the strip-like coordinates)
of the relevant moduli spaces, and denote by $\Theta_- \# Z$ the nodal curve obtained by the obvious grafting. (See Figure \ref{figure.disk-strip-glued} for the image of the grafted domain.)
We mention that we have conformal equivalences $\Theta_- \cong D^2 \setminus \{z_0\}$ and
$Z \cong D^2 \setminus \{z_0,z_1, z_2\}$. We take the following explicit model for $\Theta_-$:
Consider the domain
$$
\{z \in \mathbb{C} \mid |z| \leq 1, \, \mathrm{Im } z \geq 0\} \cup \{z \in \mathbb{C} \mid |\mathrm{Re } z| \leq 1,
\, \mathrm{Im } z \leq 0\} \label{eq:Theta-}
$$
and take its smoothing around $\mathrm{Im } z = 0$ that keeps the reflection symmetry about the $y$-axis
of the domain. Then we take
\eqn
Z = \{z \in \mathbb{C} \mid 0 \leq \mathrm{Im } z \leq 1\} \setminus \{(0,1)\}. \label{eq:Z}
\eqnd
Again we equip $Z$ with a strip-like coordinate at $z = (0,1)$ that keeps the reflection symmetry.
\end{notation}

\begin{remark}
We are using $\Theta_-$ to realize the degeneration of curves in~\eqref{eq:r=infty} through
a Riemann surface that is not only conformally equivalent to $D^2 \setminus \{z_0\}$
but also respects the symmetry of the kind \eqref{eq:varphir}. $\Theta_-$ will be useful in studying the degeneration of Floer
moduli spaces involved in understanding how the Abouzaid functor $\cF$ in \cite{oh-tanaka-actions} interacts with continuation maps.
\end{remark}

\subsection{Proof of Theorem~\ref{thm:hcL=mu2ccL}}
\label{subsubsec:hcL=mu2ccL}

In this subsection we give the proof of Theorem~\ref{thm:hcL=mu2ccL}.
We start with Proposition \ref{prop. continuation class using strips}. Using the
independence of the map $[h_\cL^\rho]$ on $\rho$, we deform $\rho$ through a suitably chosen one-parameter
family $\{\rho_r\}_{0 < r \leq 1}$ starting with $\rho_1 = \rho$ whose construction is now in order.
We would like to degenerate the moduli space $\cM(K, \cL^{\rho_r};x_-,x_+)$ to
the one associated to the right-hand side of Theorem~\ref{thm:hcL=mu2ccL} as $r \to 0$.

Applying $\varphi_1$ (see Notation~\ref{notation.varphi_r}), let us study the collection of maps
	\eqn\label{eq:cMcLK}
	v: D^2 \setminus \{z_0, z_1\}  \to M
	\qquad
	\text{
		such that $u = v \circ \varphi_1^{-1}$ satisfies~\eqref{eq:moving}
	}
	\eqnd
satisfying the finite energy condition. Any such solution converges to $x_\pm$ with
$x_+ \in K \cap L_0$ and $x_- \in K \cap L_1$ as $\tau \to \pm \infty$ respectively
in the coordinate $(\tau,t) \in \RR \times [0,1]$.
Given points $x_-$ and $x_+$, we denote by
	\eqn\label{eqn. moduli moving boundary strips continuation}
	\cM(K, \cL^\chi;x_-,x_+).
	\eqnd
the moduli space of (equivalence classes of) pairs $(v; z_0,z_1)$,
where $v$ is a solution of \eqref{eq:cMcLK} converging to $x_-$ and $x_+$,
modulo the biholomorphisms of $D^2$. This is naturally isomorphic to $\cM(K, \cL^{\rho};x_-,x_+)$
if we set $\chi = \rho \circ \varphi$ by definition.
Therefore to make the following discussion
consistent with the compactification of $\cM_4$,
we will degenerate the moduli space \eqref{eqn. moduli moving boundary strips continuation}
instead by deforming $\chi$ used in Choice \ref{choice. chi_disk} through
a one-parameter family of $\chi$'s parameterized by $\cM_4 \cong (0, \infty)$ via the
diffeomorphism $\mathfrak{r}: \cM_4 \to \RR_{\geq 0}$ as follows.

We first fix an elongation function $\rho: \RR \to [0,1]$ and define
	\eqn\label{eqn.chi-rho-compatibility}
	\chi_r = \rho \circ \varphi_r
	\eqnd
for $r > 0$ and denote
$$
\rho_r = \rho \circ (\varphi_r \circ \varphi_1^{-1}) = \chi_r \circ  \varphi_1^{-1}.
$$
Note that $\rho_1 = \rho$. Then we introduce a parameterized moduli space
of $(v; {\bf z})$ with ${\bf z} = (z_0,z_1,z_2,z_3)$ by adding two more marked points
$z_2, \, z_3$ to $(u; z_0, z_1)$. We have the natural fibration
	$$
\mathfrak{ft}: \cM_4(K, \cL^{\chi_{{\mathfrak{r}}}};x_-,x_+) \to \cM_4
	$$
whose fiber is given by
$
\cM_4(K, \cL^{\chi_r};x_-,x_+)
$
with $\chi_r:= \rho \circ \varphi_r$ for $r = \mathfrak r([z_0,z_1,z_2,z_3])$: Each element of
$\cM_4(K, \cL^{\chi_r};x_-,x_+)$ is a pair
$$
(v; z_0, z_1, z_2, z_3) \quad \text{satisfying } \, \mathfrak{r}([z_0, z_1, z_2, z_3]) = r
$$
where $v$ is defined on $D^2 \setminus \{z_0, z_1\}$. Since adding two additional
(free) marked points $z_2,\, z_3$ increases the dimension by 2, we need to cut it down by
putting a codimension 1 constraint on the location of each of $v(z_2)$ and $v(z_3)$. We do this by
taking local codimension 1 slices transversal to the image of $v$ at $v(z_2)$ and $v(z_3)$,
respectively.
For this purpose, we use the following lemma: Recall that in the situation of
Theorem~\ref{thm:hcL=mu2ccL}
the moduli space $\cM_4(K, \cL^\chi; x_-,x_+)$ is zero dimensional and compact for $\chi = \chi_1$.
In particular it consists of finitely many elements. We enumerate them by
$$
\cM_4(K, \cL^\chi;x_-,x_+) = \{v^{(1)}, \ldots, ,v^{(N)}\}.
$$
\begin{lemma}\label{lem:faraway} Suppose the moduli space $\cM(K, \cL^{\chi};x_-,x_+)$
is nonempty and regular for $\chi = \chi_1$. Then we can choose marked points
${\bf z}^\ell = (z_0^\ell, z_1^\ell, z_2^\ell, \, z_3^\ell)$ so that
\begin{enumerate}
\item ${\mathfrak r}([z_0^\ell, z_1^\ell, z_2^\ell, \, z_3^\ell]) = 1$,
\item $v^{(\ell)}$ is immersed at $z^\ell_2$ and $z^\ell_3$ for all $\ell = 1, \ldots, N$.
\item there exists some $\delta_0 > 0$ such that
$$
d\left(v^{(\ell)}(z^\ell_i),v^{(\ell')}(z^{\ell'}_j)\right) \geq \delta_0
$$
for any pair $(\ell,i) \neq (\ell',j)$ with
    $j = 2, \, 3$ and $1 \leq \ell \leq N$.
\end{enumerate}
\end{lemma}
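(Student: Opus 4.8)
The plan is to reduce the lemma, via the action of $PSL(2,\RR)$, to choosing a single real parameter for each of the finitely many solutions $v^{(1)},\ldots,v^{(N)}$, and then to dispatch conditions (1)--(3) by a routine genericity argument.

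First I would fix the two punctures $z_0,z_1$ and a reference $4$-tuple $(z_0,z_1,z_2^{\circ},z_3^{\circ})$ of cross ratio $1$, and use the associated conformal map $\varphi_1$ of Notation~\ref{notation.varphi_r} to identify $D^2\setminus\{z_0,z_1\}$ with $\RR\times[0,1]$; this turns each $v^{(\ell)}$ into a solution $u^{(\ell)}\colon \RR\times[0,1]\to M$ of~\eqref{eq:moving} with $\rho=\rho_1$. With $z_0,z_1$ held fixed, the $4$-tuples of cross ratio $1$ form exactly one orbit of the one-dimensional subgroup of $PSL(2,\RR)$ stabilizing $\{z_0,z_1\}$, which in the strip model is the translation group $\tau\mapsto\tau+c$; hence they are precisely $z_2=\zeta(1+c)$, $z_3=\zeta(-1+c)$ for $c\in\RR$, where $\zeta(a)$ is the boundary point corresponding to $(a,1)$. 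So (1) is automatic once we set $z_2^{\ell}:=\zeta(1+c_\ell)$, $z_3^{\ell}:=\zeta(-1+c_\ell)$, and --- writing $\gamma_\ell(\tau):=u^{(\ell)}(\tau,1)$ for the $\cL$-boundary curve --- the lemma reduces to choosing reals $c_1,\ldots,c_N$ in a fixed bounded interval $[-R_*,R_*]$ (bounded so all marked points stay uniformly away from $z_0,z_1$, which is also convenient for the later transversal slices) such that $\gamma_\ell$ is immersed at $1+c_\ell$ and at $-1+c_\ell$ and such that the $2N$ points $\gamma_\ell(\pm1+c_\ell)$ are pairwise distinct. Here I use that~\eqref{eq:moving} has no Hamiltonian term, so $\partial_t u^{(\ell)}=-J_{(\rho(\tau),t)}\partial_\tau u^{(\ell)}$ and hence $dv^{(\ell)}$ is injective at a boundary point precisely when $\gamma_\ell'\neq0$ there.

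Next I would record three discreteness facts, each a standard consequence of the similarity principle and boundary unique continuation for~\eqref{eq:moving}. First, each $u^{(\ell)}$ is non-constant: a constant solution would take its value in $\bigcap_s L_s$ (and, being constant, in $K\cap L_0\cap L_1$), empty in general position. Then: (a) $\{\tau:\partial_\tau u^{(\ell)}(\tau,1)=0\}$ is discrete, since $\xi:=\partial_\tau u^{(\ell)}$ solves a perturbed Cauchy--Riemann equation (differentiate~\eqref{eq:moving} in $\tau$) and does not vanish identically (else $u^{(\ell)}$ would be constant); (b) for each $p\in M$, $\{\tau:\gamma_\ell(\tau)=p\}$ is discrete, as $\gamma_\ell$ is non-constant on every interval; and (c) $\{c:\gamma_\ell(1+c)=\gamma_\ell(-1+c)\}$ is discrete, for otherwise $\gamma_\ell$ would be locally invariant under the shift $\tau\mapsto\tau-2$, which is incompatible with the moving datum $L_{\rho(1-\tau)}$ where $\rho$ is strictly increasing (as $\rho(1-\tau)\neq\rho(3-\tau)$) and, where $\rho$ is constant, with the asymptotic limits $x_\pm$ at $\tau\to\pm\infty$ --- in either case forcing $u^{(\ell)}$ constant, a contradiction.

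Finally I would choose $c_1,\ldots,c_N$ inductively. With $c_1,\ldots,c_{\ell-1}$ already fixed, the finite set $P_\ell:=\{\gamma_{\ell'}(1+c_{\ell'}),\gamma_{\ell'}(-1+c_{\ell'}):\ell'<\ell\}\subset M$ is determined; by (a), (b), (c) the set of $c\in[-R_*,R_*]$ for which $\partial_\tau u^{(\ell)}$ vanishes at $(1+c,1)$ or $(-1+c,1)$, or $\gamma_\ell(\pm1+c)\in P_\ell$, or $\gamma_\ell(1+c)=\gamma_\ell(-1+c)$, is finite, so I may pick $c_\ell$ outside it. After $N$ steps, (2) holds by construction and the $2N$ points $v^{(\ell)}(z_i^{\ell})=\gamma_\ell(\pm1+c_\ell)$ are pairwise distinct; being finitely many they are uniformly separated, so (3) holds with $\delta_0$ their minimal pairwise distance. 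The step I expect to require the most care is (c): ruling out a positive-dimensional self-coincidence locus of the boundary curve genuinely uses unique continuation for the Cauchy--Riemann equation with moving Lagrangian boundary conditions, and one must keep in mind that $u^{(\ell)}$ and its $\tau$-translates satisfy different equations; the remaining ingredients are routine.
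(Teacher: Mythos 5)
The paper itself omits the proof of this lemma, saying only that it is ``a simple application of the unique continuation of the image of pseudoholomorphic curves and the implicit function theorem,'' so there is no detailed argument to compare against. Your overall strategy is the natural way to fill in that omission: fixing $z_0,z_1$, the cross-ratio-one configurations form a single orbit of the translation group of the strip, so the lemma reduces to choosing one translation parameter $c_\ell$ per solution avoiding finitely many bad sets; and immersedness at a boundary point of a solution of \eqref{eq:moving} is indeed equivalent to non-vanishing of $\gamma_\ell'=\partial_\tau u^{(\ell)}(\cdot,1)$ there. Your facts (a) and (b) are fine (and you are right that non-constancy of each $u^{(\ell)}$ must be addressed, since a constant solution is nowhere immersed and the lemma would fail for it). I would only note that ``discrete'' is slightly more than you justify at boundary points; but all the argument needs is that each bad set of parameters is closed with empty interior, since a finite union of such sets still has dense complement, and that weaker statement does follow from boundary unique continuation: were $\gamma_\ell$ constant on an interval, $du^{(\ell)}$ would vanish on a boundary arc and $u^{(\ell)}$ would be constant.

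The genuine gap is in step (c), exactly where you anticipated trouble. First, for a merely smooth function a non-discrete coincidence set need not contain an interval, so ``otherwise $\gamma_\ell$ would be locally invariant under the shift $\tau\mapsto\tau-2$'' does not follow from the failure of discreteness; at best you get this when the coincidence set has nonempty interior. Second, and more seriously, even granting $\gamma_\ell(\tau)=\gamma_\ell(\tau-2)$ on an interval, the contradiction is not established: where $\rho(1-\tau)\neq\rho(3-\tau)$ you only learn that $\gamma_\ell(\tau)\in L_{s_1}\cap L_{s_2}$ for two distinct intermediate times $s_1\neq s_2$ of the isotopy, and nothing in the hypotheses makes $L_{s_1}\cap L_{s_2}$ small for intermediate times (only $L_0\pitchfork L_1$ is assumed); where $\rho$ is locally constant, periodicity of a boundary arc does not ``force $u^{(\ell)}$ constant'' without a boundary unique-continuation comparison between $u^{(\ell)}$ and its $\tau$-translate, which, as you yourself observe, satisfies a different equation. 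Fortunately (c) can be bypassed entirely by restricting the translation parameter to $c\in(0,1)$: with the elongation function of Choice~\ref{choice. elongation rho}, the boundary condition of \eqref{eq:moving} at $(\tau,1)$ is $L_0$ for $\tau\geq 1$ and $L_1$ for $\tau\leq 0$, so the marked points at $\tau=1+c$ and $\tau=-1+c$ land on the $L_0$- and $L_1$-labelled parts of the boundary respectively. A coincidence $\gamma_\ell(1+c)=\gamma_\ell(-1+c)$ would then place the common value in $L_0\cap L_1$, which is discrete by the standing transversality assumption (and finite on the compact image of the $\gamma_\ell$), so your fact (b) already shows the set of such $c$ is closed with empty interior. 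With this modification the inductive choice of $c_1,\ldots,c_N$ goes through and the proof is complete.
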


The proof of this lemma---a simple
application of the unique continuation of the image of pseudoholomorphic curves \cite{FHS,oh-JGA}
and the implicit function theorem---is omitted.

We then pick a local transversal slice $S_i^\ell \subset M$
for each $i = 2, \, 3$ and $1 \leq \ell \leq N$ such that
\begin{itemize}
\item $\{S^\ell_i\}$ are pairwise disjoint for all $\ell$ and $i=2, \,3$,
\item $v^{(\ell)}(z_i^\ell) \in S^\ell_i \cap L_{s^{\ell}_i}$ for all $\ell$ for each $i = 2, \, 3$,
\item Both $S^\ell_i \pitchfork v^{(\ell)}$ in $M$ and $(S^\ell_i \cap L_{s_i^\ell}) \pitchfork \del v^{(\ell)}$
hold at $z_i^{0}$ for $i = 2, \, 3$ where $s_i^\ell = \chi_{r^\ell}(z_i^\ell)$ with $r^\ell = \mathfrak{r}([z_0^\ell,z_1^\ell,z_2^\ell,z_3^\ell])$.
\end{itemize}
It follows from Condition 3 of Lemam \ref{lem:faraway} that by taking $S^\ell_i$ sufficiently small,
we may also assume
\eqn\label{eq:dS2S3}
 d(S^\ell_2, S^\ell_3) > \frac{\delta_0}{2}
\eqnd
for all $\ell = 1, \ldots, N$. We denote these collection of $S^\ell_i$ by
$$
\cS_i = \{S^1_i, \ldots, S^N_i\}, \, i = 2,\, 3.
$$
\begin{prop}\label{prop:slice} Suppose the moduli space $\cM(K, \cL^{\chi};x_-,x_+)$
is nonempty and transversal. Let ${\cS}^\ell_i$ for $i=2, \, 3$ be the collection of
the local slices for the $\cM(K, \cL^{\chi};x_-,x_+)$ chosen in
Lemma \ref{lem:faraway} above. We define a subset of $\cM_4(K, \cL^{\chi_{\mathfrak{r}}};x_-,x_+)$ by
\begin{eqnarray*}
&{}& \cM_4^{{\cS}_2,{\cS}_3}(K, \cL^{\chi_{\mathfrak{r}}};x_-,x_+) \\
& =& \left\{(r,[(v; {\bf z})]) \in
\cM_4(K, \cL^{\chi_{\mathfrak{r}}};x_-,x_+) \Big\vert v(z_i) \in \cup_{\ell=1}^N S^\ell_i, \, i=2,\,3 \right\} \bigcap \mathfrak{r}^{-1}((0,1])
\end{eqnarray*}
where ${\bf z} = (z_0,z_1,z_2,z_3)$. Then provided the isotopy $\{L_s\}$ is sufficiently small in fine $C^\infty$ topology,
this moduli space is a smooth submanifold of $\cM_4(K, \cL^{\chi_{\mathfrak{r}}};x_-,x_+) \to (0,\infty)$ of
codimension 2 and so of one dimension.
Moreover the same property also persists to the compactification
$$
\overline{\cM}_4^{{\cS}_2,{\cS}_3}(K, \cL^{\chi_{\mathfrak{r}}};x_-,x_+) \subset
\overline{\cM}_4(K, \cL^{\chi_{\mathfrak{r}}};x_-,x_+)\cap \mathfrak{r}^{-1}([0,1]):
$$
\begin{enumerate}
\item $v$ is immersed at $z_2$ and $z_3$ for all $(v; {\bf z}) \in \overline{\cM}(K, \cL^{\chi_{\mathfrak{r}}};x_-,x_+)$,
\item $v(z_i) \in \cup_{\ell=1}^N S^\ell_i, \, i=2,\, 3$.
\end{enumerate}
Both $S^\ell_i \pitchfork v$ in $M$ and $(S^\ell_i \cap L_{s_i}) \pitchfork \del v$
hold at $z_i$ for $i = 2, \, 3$ where $s_i = \chi_{r}(z_i)$ with $r = \mathfrak{r}([z_0,z_1,z_2,z_3])$
for all $\mathfrak{r}^{-1}([0,1])$.
\end{prop}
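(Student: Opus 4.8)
The plan is to realize $\cM_4^{\cS_2,\cS_3}(K,\cL^{\chi_{\mathfrak{r}}};x_-,x_+)$ as a transverse fiber product and to run an implicit‑function‑theorem argument anchored at the $\mathfrak{r}=1$ fiber, using the $C^\infty$‑smallness of $\{L_s\}$ to propagate all the needed control to every $\mathfrak{r}\in(0,1]$ and through the degeneration $\mathfrak{r}\to 0$. First I would note that, by the transversality theory for the continuation equation \eqref{eq:moving} (Proposition~\ref{prop. dimension of continuation moduli}) together with a standard parametric argument over $\cM_4$, the moduli space $\cM_4(K,\cL^{\chi_{\mathfrak{r}}};x_-,x_+)$ is a smooth manifold, and that $\cM_4^{\cS_2,\cS_3}$ is cut out of it as the preimage of $\bigsqcup_{\ell}\bigl(S^\ell_2\times S^\ell_3\bigr)$ under the evaluation map $(\ev_{z_2},\ev_{z_3})$, intersected with $\mathfrak{r}^{-1}((0,1])$. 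Since each $S^\ell_i$ is a codimension‑one slice, this imposes two codimension‑one constraints, so once the relevant transversality is in place $\cM_4^{\cS_2,\cS_3}$ has codimension $2$ and hence dimension $1$, as claimed.

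Next I would verify transversality at the anchor $\mathfrak{r}=1$. By Condition~(3) of Lemma~\ref{lem:faraway}, after shrinking the slices we may assume the points $v^{(\ell)}(z_i^\ell)$ are pairwise at distance $\geq\delta_0$, so that the fiber of $\cM_4^{\cS_2,\cS_3}$ over $\mathfrak{r}=1$ is exactly the finite set $\{(v^{(\ell)};{\bf z}^\ell)\}_{\ell=1}^N$, each configuration landing in a single pair of slices $S^\ell_2\times S^\ell_3$, with no ``mixed'' configurations. At each such point the transversality we need is precisely what was built in when choosing the slices: the interior condition $S^\ell_i\pitchfork v^{(\ell)}$ makes $\ev_{z_i}$ transverse to $S^\ell_i$ in $M$, while $(S^\ell_i\cap L_{s^\ell_i})\pitchfork\del v^{(\ell)}$ handles the boundary restriction, since $z_i\in\del D^2$ and $v(z_i)$ lies on the moving Lagrangian. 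The implicit function theorem then exhibits $\cM_4^{\cS_2,\cS_3}$ as a smooth $1$‑manifold near $\mathfrak{r}=1$, with the set $\{(v^{(\ell)};{\bf z}^\ell)\}$ as its $\mathfrak{r}=1$ boundary.

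To propagate this over all of $\mathfrak{r}^{-1}((0,1])$ and into the compactification, I would invoke the smallness hypothesis together with Gromov--Floer compactness. When $\{L_s\}$ is $C^\infty$‑small, the solutions of \eqref{eq:moving} have uniformly small energy and, by the usual $C^0$‑ and energy estimates (applied to $M$), lie in a fixed compact region and stay $C^\infty$‑close to solutions of the unperturbed $(K,L_0)$‑problem; this precludes bubbling and strip‑breaking for $\mathfrak{r}$ bounded away from $0$, keeps the linearized $\delbar$‑operator surjective, and keeps the whole family $\{v_{\mathfrak{r}}\}$ in a uniform $C^\infty$‑neighborhood of the $\mathfrak{r}=1$ solutions on which the fixed small slices $S^\ell_i$ remain transverse, so the fiber product stays transverse and $\cM_4^{\cS_2,\cS_3}$ is a smooth $1$‑manifold over $(0,1]$. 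Smallness also forces the only degeneration at $\mathfrak{r}=0$ to be the one dictated by $\overline{\cM}_4$, namely the nodal curve $\mathfrak{r}^{-1}(0)$ of \eqref{eq:r=infty}: a triangle carrying $z_0,z_1,\zeta$ (the $\mu^2$‑type problem with boundary $K,L_0,L_1$) glued along $\zeta$ to a once‑punctured disk with moving boundary carrying $z_2,z_3$ (the $c_{\cL}$‑type problem). The conditions ``$v$ immersed at $z_2,z_3$'' and ``$v(z_i)\in\bigcup_\ell S^\ell_i$'' are closed and so pass to this limit, and---reading the family in the gluing charts near $\mathfrak{r}=0$---smallness confines the $z_2,z_3$‑bearing once‑punctured component to a small $C^\infty$‑neighborhood of (the relevant part of) some $v^{(\ell)}$, where the fixed slices still meet it transversally, both in $M$ and after restriction to $L_{s_i}$. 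This presents $\overline{\cM}_4^{\cS_2,\cS_3}(K,\cL^{\chi_{\mathfrak{r}}};x_-,x_+)$ as a compact $1$‑manifold with boundary, whose boundary consists of the $N$ anchor points at $\mathfrak{r}=1$ together with the slice‑constrained broken configurations at $\mathfrak{r}=0$.

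I expect the main obstacle to be exactly this last point: the uniform control through the degeneration $\mathfrak{r}\to 0$, where one must argue that slices chosen once and for all at $\mathfrak{r}=1$ via Lemma~\ref{lem:faraway} continue to meet the evaluation maps transversally for all smaller $\mathfrak{r}$ and in the broken limit, in which the $z_2,z_3$‑bearing component has split off as an independent once‑punctured disk with moving boundary. It is here that ``$\{L_s\}$ sufficiently small in fine $C^\infty$ topology'' is essential: it keeps every curve of the family---including the broken limit, read in pre‑gluing coordinates---within a $C^\infty$‑neighborhood of the $\mathfrak{r}=1$ representatives, so that immersedness at $z_2,z_3$ and transversality to the fixed slices $S^\ell_i$ are never lost along the way.
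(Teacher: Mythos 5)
Your proposal is correct and is essentially the paper's own argument spelled out in detail: the paper simply cites the standard local normal slice theorem of Fukaya--Oh--Ohta--Ono (adapted from interior, codimension-two slices to boundary, codimension-one slices and to the parametric family over $\mathfrak{r}^{-1}([0,1])$), and notes that the $C^\infty$-smallness of the isotopy is what makes the slice/transversality properties persist over the whole family. Your fiber-product formulation, the anchoring at $\mathfrak{r}=1$ via Lemma~\ref{lem:faraway}, and the use of smallness to propagate transversality through $\mathfrak{r}\to 0$ are precisely the content of that cited theorem in this setting.
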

\begin{proof} This is a standard local normal slice theorem;
we refer readers to \cite[p.424]{fooo2}  for a proof and for the details of such a construction
at the interior marked points. The current case of boundary marked points is the same
except the following differences
\begin{itemize}
\item Here our slice $S^\ell_i$ is of codimension 1 in $M$ instead of codimension 2
in $M$,
\item We want this slice theorem for the whole family of the moduli spaces
$\overline{\cM}_4(K, \cL^{\chi_{\mathfrak{r}}};x_-,x_+) \to \mathfrak{r}^{-1}([0,1]) \subset \overline{\cM}^4$.
\end{itemize}
The $C^\infty$ smallness of the isotopy is required to ensure these properties for the whole family.
\end{proof}

The $C^\infty$ smallness required in the proposition can be always achieved by
breaking the given isotopy into a concatenation of smaller isotopies by choosing times
$$
0 = t_0 < t_1 < t_2 < \cdots < t_N = 1.
$$
We also remark that for the proof of Theorem~\ref{thm:hcL=mu2ccL}
will follow if we prove it for the portion on each interval $[t_i,t_{i+1}]$ of the given isotopy.
With these being said, we will assume from now on that the isotopy is sufficiently
$C^\infty$ small so that the global slices $S^\ell_2,\, S^\ell_3$ exist independently of $r \in [0,1]$.

Then by construction we have the following obvious one-one correspondence
$$
\cM_4^{{\cS}_2,{\cS}_3}(K, \cL^{\chi_r};x_-,x_+) \cong \cM(K, \cL^{\chi_r};x_-,x_+) \cap (\mathfrak{r})^{-1}((0,1])
$$
where we denote
$$
\cM_4^{{\cS}_2,{\cS}_3}(K, \cL^{\chi_r};x_-,x_+): = \cM_4^{{\cS}_2,{\cS}_3}(K, \cL^{\chi_{\mathfrak{r}}};x_-,x_+) \cap (\mathfrak{r})^{-1}(r).
$$
Therefore we observe that there is a canonical diffeomorphism
$$
\cM_4^{{\cS}_2,{\cS}_3}(K, \cL^{\chi_r};x_-,x_+) \cong \cM(K, \cL^{\rho_r};x_-,x_+)
$$
induced by $v \mapsto v \circ \varphi_r^{-1}$ for the mapping part.
We introduce a parameterized Floer continuation moduli space
	$$
\cM^{\text{para}}(\cL,L;x_-,x_+) = \coprod_{r \in (0,1]} \{r\} \times \cM(K, \cL^{\rho_r};x_-,x_+)
	$$
\emph{defined on the domain $\RR \times [0,1]$}.
We have a natural fiberwise isomorphism
	$$
\widetilde{\mathfrak r}:\cM_4^{{\cS}_2,{\cS}_3}(K, \cL^{\chi_{{\mathfrak{r}}}};x_-,x_+ )
\to \cM^{\text{para}}(K, \cL;x_-,x_+)
	$$
given by
$$
\widetilde{\mathfrak r}(v;z_0,z_1,z_2,z_3) = \left(v \circ \varphi_r^{-1}, \mathfrak{r}([z_0,z_1,z_2,z_3])\right)
$$
which makes the following commutative diagram commute,
	$$
\xymatrix{\cM_4^{{\cS}_2,{\cS}_3}(K, \cL^{\chi_{{\mathfrak{r}}}};x_-,x_+)
 \ar[d]^{\mathfrak{ft}}\ar[r]^{\widetilde{\mathfrak r}}
& \cM^{\text{para}}(K, \cL;x_-,x_+) \ar[d]\\
(\mathfrak r \circ \mathfrak{ft})^{-1}((0,1]) \ar[r]^{\mathfrak r} & (0,1]}
	$$
where $\mathfrak{ft}$ is the restriction of the forgetful map
$$
\mathfrak{ft}:\cM_4(K, \cL^{\chi_{{\mathfrak{r}}}};x_-,x_+) \to \cM^4.
$$
We note that the condition $v(z_i) \in S^\ell_i \cap L_{s_i}$ for $i = 2,\, 3$ in
Proposition \ref{prop:slice} and \eqref{eq:dS2S3} imply the separating condition  $d(v(z_2),v(z_3)) \geq \frac{\delta_0}{2} > 0$
for all $v \in {\overline{\cM}}^{{\cS}_2,{\cS}_3}(K, \cL^{\chi_{\mathfrak{r}}};x_-,x_+)$.
Then it follows from the standard Gromov-Floer compactification and
one-jet transversality that the compactified moduli space
$$
\overline{\cM}_4^{{\cS}_2,{\cS}_3}(K, \cL^{\chi_{{\mathfrak{r}}}};x_-,x_+)
$$
carries its boundary consisting of the types
\begin{enumerate}
\item $\cM_4^{{\cS}_2,{\cS}_3}(K, \cL^{\chi_{{\mathfrak{r}}}});x_-,x_+)|_{r=0} \cong
\cM_{3}^{{\cS}_2,{\cS}_3}(\cL^\chi;y)\# \cM_{3}(K,L_0,L_1; y,x_-, x_+)$
with $y \in L_0 \cap L_1$,
\item $\cM_4^{{\cS}_2,{\cS}_3}(K,\cL^{\chi_{{\mathfrak{r}}}};x_-,x_+)\Big|_{r=1}$,
\item\label{item. boundary of cm4} $\cM_1^{{\cS}_3}(K,L_1;x_-,x') \# \cM_3^{{\cS}_2}(K, \cL^{\mathfrak{r}};x',x_+)$
for some $x' \in K \cap L_1$ with $|x_-| = |x'| + 1$,
\item\label{item. another boundary of cm4} $\cM_3^{{\cS}_3}(K, \cL^{\chi_{{\mathfrak{r}}}};x_-,x)
\#\cM_1^{{\cS}_2}(K,L_0;x,x_+)$ for some
$x \in K \cap L_0$ with $|x| = |x_+| + 1$.
\end{enumerate}
Let us explain the notation.
\begin{itemize}
\item $\cM_3^{{\cS}_2,{\cS}_3}(\cL^{\chi};x)$ is the moduli space of equivalence classes of pairs
$(w;\zeta, z_2,z_3)$ where
  $w$ is a function $D^2 \setminus \{\zeta\} \to (M, \cL)$ satisfying
 $$
  w(z_2) \in \bigcup_{\ell=1}^N S^\ell_2, \, w(z_3) \in \bigcup_{\ell=1}^N S^\ell_3
 $$
and
	\eqnn
\begin{cases}
\delbar w = 0, \\
\lim_{z \to \zeta} w(z) = x, \\
w(z) \in L_{\chi(z)} & \text{for } \, z \in \del D^2 \setminus \{\zeta\}
\end{cases}
	\eqnd
for $\chi: =\rho \circ \varphi|_{\partial D^2 \setminus \{\zeta\}}$ where $\varphi$ is
the map given in \eqref{eq:varphi}. The forgetful map $(w;\zeta,z_2,z_3) \mapsto w$
induces an isomorphism between $\cM_3^{{\cS}_2,{\cS}_3}(\cL^{\chi};y)$ and the moduli space $\cM(D^2\setminus\{\zeta\};\cL^\chi,y)$ studied in
Section \ref{sec:moving}. Therefore the count of $\cM_3^{{\cS}_2,{\cS}_3}(\cL^{\chi};y)$
gives rise to the operation $c_{\cL}$.
\item $\cM_3(K,L_0,L_1;y,x_-,x_+)$ is the moduli space whose count encodes the obvious coefficients in the usual $\mu^2$ operation.
\item $\cM_1^{{\cS}_3}(K,L_1;x_-,x') \cong \cM(K,L_1;x_-,x')$ and
$\cM_1^{{\cS}_2}(K,L_0;x,x_+)\cong \cM(K,L_0;x,x_+)$ are the similarly defined moduli of strips (with non-moving boundary conditions) whose counts encode the obvious coefficients in the usual $\mu^1$ operation.
We note that
one-jet transversality is used to establish that the bubbling of such type
cannot occur at the interior parameter $0 < r < \infty$ by dimension counting.)
\end{itemize}

Now we define the map $\cH: CF^*(K,L_0) \to CF^{* - 1}(K,L_1)$ of degree $-1$ by
	$$
\cH(z) = \sum_{|x| = |z|-1} \#\left(\cM_4^{{\cS}_2,{\cS}_3}(K, \cL^{\mathfrak{r}};z,x)\right) \langle x\rangle.
	$$
We mention that $\dim \cM_4^{{\cS}_2,{\cS}_3}(K, \cL^{\mathfrak{r}};z,x) = 0$ since $|z| = |x|-1$.
By summing up the sign counts of all the points in
$\del \overline{\cM}_4^{{\cS}_2,{\cS}_3}(K, \cL^{\mathfrak{r}};x_-,x_+)$ and utilizing
\begin{itemize}
    \item The isomorphism
    $\cM_4^{{\cS}_2,{\cS}_3}(K, \cL^{\mathfrak{r}};x_-,x_+)\Big|_{r=1} \cong \cM(K, \cL^\rho;x_-,x_+)$
    since $\rho_1 = \rho$,
    \item The definitions of $\mu^1$ and $\mu^2$ (which are standard),
    \item The definition of $c^{\chi}_{\cL}$ (Construction~\ref{construction. continuation cochain}),
    \item The definition of the continuation map $h_\cL^\rho$ (Construction~\ref{construction. continuation strip}), and
    \item The definition of $\cH$ just given,
\end{itemize}
we have
proven the identity
	$$
h_\cL^\rho - \mu^2(c^{\chi}_{\cL},\ast) = \mu^1 \cH + \cH \mu^1.
	$$
This proves that the two maps $h_\cL^\rho, \, \mu^2(c^{\chi}_{\cL},\ast)$ are chain-homotopic.
By taking cohomology, we have finished the proof.

\begin{remark}
The scheme we use in the proof is in the same spirit as in the definition of the
stable map topology given in~\cite{fukaya-ono:arnold}. There, convergence
for a sequence of maps with \emph{unstable} domain is defined by first
stabilizing the domain by adding additional marked points, taking
the limit, then finally forgetting the added extra marked points.
(See also \cite[Section 9.5.2]{oh:book2} for an amplification of this trichotomy.)
In the above proof, we take a choice of the minimal (and so optimal) number
of additional marked points by taking suitable transversal normal slices for the
convergence proof: This is guided by our goal to prove the identity spelled out in Theorem~\ref{thm:hcL=mu2ccL}. All these steps are a part of a standard process
in the study of moduli space of pseudoholomorphic curves in general---for example,
in the construction of the Kuranishi structure and abstract perturbation
of the moduli space of stable maps.
\end{remark}

\bibliographystyle{amsalpha}
\bibliography{biblio}

\end{document}